\numberwithin{equation}{section}
\font\script=rsfs10 at 11pt
\def\R{\mathbb R}
\def\B{\mathcal B}
\def\A{\mathcal A}
\def\D{\mathcal D}
\def\I{\mathcal I}
\def\DoubleS{\mathcal S}
\def\T{{\mbox{\script T}\,\,}}
\def\U{{\mbox{\script U}\,\,}}
\def\DoubleSS{{\mbox{\script S}\,\,}}
\def\BB{{\mbox{\script B}\,\,}}
\def\N{\mathbb N}
\def\W1p{{\mathrm W}^{1,p}}
\def\eps{\varepsilon}
\def\angle#1#2#3{#1\widehat{#2}#3}
\def\arc#1{\wideparen{#1}}
\def\u#1{\hbox{\boldmath $#1$}}
\def\proofof#1{\begin{proof}[Proof of~#1]}
\def\bal{\begin{aligned}}
\def\eal{\end{aligned}}
\def\lsegm#1{\ell\big(\u{#1}\big)}
\def\lcurve#1{\ell\big({\arc{\u{#1}}\big)}}
\def\uangle#1#2#3{\angle{\u{#1}}{\u{#2}}{\u{#3}}}
\def\CB{\widehat{\u{\B}}}
\def\tt{{\tilde\tau}}
\def\ee{\mathrm{e}}
\def\bu{$\u{A}_{10}\equiv \u{A}$}
\def\cu{$\u{B}_{10-12}\equiv \u{B}$}
\def\du{$\u{A}_{j-1}\equiv \u{A}_j$}
\def\eu{$\u{B}_{j-1}'$}
\def\case#1#2{\par\medskip \noindent{\underline{\it Case~#1.}}\emph{ #2}\par}
\def\part#1#2{\par\medskip \noindent{\underline{\it Part~#1.}}\emph{ #2}\par}
\def\bigstep#1#2{\bigskip\subsection{Step #1: #2}~\par\medskip}
\newcounter{mt}
\def\maintheorem#1#2#3{\par \medskip \noindent {\bf Theorem \mref{#1}}~(#2).~{\it #3}\par}
\def\mref#1{\Alph{#1}}
\def\maintheoremdeclaration#1{\stepcounter{mt}\newcounter{#1}\setcounter{#1}{\arabic{mt}}}
\newtheorem{theorem}{Theorem}[section]
\newtheorem{lemma}[theorem]{Lemma}
\newtheorem{definition}[theorem]{Definition}
\newtheorem{corollary}[theorem]{Corollary}
\newtheorem{remark}[theorem]{Remark}
\begin{document}

\title{A planar bi-Lipschitz extension Theorem}

\author{Sara Daneri}\address{Dipartimento di Matematica, via Ferrata 1, 27100 Pavia (Italy)}\email{sara.daneri@gmail.com}
\author{Aldo Pratelli}\address{Dipartimento di Matematica, via Ferrata 1, 27100 Pavia (Italy)}\email{aldo.pratelli@unipv.it}

\begin{abstract}
We prove that, given a planar bi-Lipschitz homeomorphism $u$ defined on the boundary of the unit square, it is possible to extend it to a function $v$ of the whole square, in such a way that $v$ is still bi-Lipschitz. In particular, denoting by $L$ and $\widetilde L$ the bi-Lipschitz constants of $u$ and $v$, with our construction one has $\widetilde L \leq C L^4$ (being $C$ an explicit geometrical constant). The same result was proved in 1980 by Tukia (see~\cite{Tukia}), using a completely different argument, but without any estimate on the constant $\widetilde L$. In particular, the function $v$ can be taken either smooth or (countably) piecewise affine.
\end{abstract}

\maketitle

\section{Introduction}

Given a set $C\subseteq \R^n$ and a function $u:C\to\R^n$, we say that $u$ is bi-Lipschitz with constant $L$ (or, shortly, $L$ bi-Lipschitz) if, for any $x\neq y\in C$, one has
\begin{equation}\label{biL}
\frac{1}{L} \, |y-x| \leq |u(y)- u(x)| \leq L |y-x|\,.
\end{equation}
Consider the following very natural question. If $u:C\to\R^n$ is bi-Lipschitz, is it true that there exists an extension $v: \R^n\to\R^n$ which is still bi-Lipschitz? Notice that, roughly speaking, we are asking whether the classical Kirszbraun Theorem holds replacing the Lipschitz condition with the bi-Lipschitz one. It is easy to observe that the answer to our question is, in general, negative. Indeed, let $C$ be the unit sphere plus its center $O$ and let $u$ be a function sending the sphere in itself via the identity, and $O$ in some point out of the sphere. Then, it is clear that all the continuous extensions of $u$ to the whole unit ball cannot be one-to-one. In fact, the real obstacle in this example is of topological nature. Therefore, one is lead to concentrate on the case in which $C$ is the boundary of a simply connected set. In particular, we will focus on the case in which the dimension is $n=2$, and $C=\partial \D$ is the boundary of the unit square $\D=(-1/2,1/2)^2$. In this case, to the best of our knowledge, the following first positive result was found in 1980~(\cite{Tukia}).
\begin{theorem}[Tukia]
Let $u:\partial \D\to \R^2$ be an $L$ bi-Lipschitz map. Then there exists an extension $v:\D\to \R^2$ which is also bi-Lipschitz, with constant $\widetilde L$ depending only on $L$. In particular, $v$ can be taken countably piecewise affine (that is, $\D$ is the locally finite union of triangles on which $v$ is affine).
\end{theorem}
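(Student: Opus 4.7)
My plan is to construct $v$ as a countably piecewise affine map on a locally finite triangulation of $\D$ that refines toward $\partial\D$. As a preliminary reduction I would first treat the case in which $u$ is polygonal, i.e., piecewise affine on $\partial\D$ with finitely many pieces; a general bi-Lipschitz $u$ can be approximated by such maps with a controlled increase in the bi-Lipschitz constant, and a compactness/limit argument at the end would recover the statement for arbitrary $u$. The substantive work is therefore in the polygonal case.

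For polygonal boundary data, the natural tool is a shell construction. I would fix a point $O\in\D$ and build a strictly decreasing sequence of simple polygons $\D=\mathcal{P}_0\supset\mathcal{P}_1\supset\mathcal{P}_2\supset\cdots$ whose diameters shrink geometrically to zero, together with image polygons $\mathcal{Q}_0=u(\D)\supset\mathcal{Q}_1\supset\mathcal{Q}_2\supset\cdots$ and intermediate bi-Lipschitz parametrizations $u_k:\partial\mathcal{P}_k\to\partial\mathcal{Q}_k$ with $u_0=u$. On each annular region $\mathcal{P}_{k-1}\setminus\mathcal{P}_k$ I would set up a triangulation whose vertices on the two boundary components are matched through a short combinatorial dictionary, and define $v$ to be affine on each triangle, agreeing with $u_{k-1}$ and $u_k$ on the outer and inner boundaries respectively. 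The common limit point of the nested shells fills in the remaining interior point, and local finiteness of the triangulation away from that point is automatic.

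The core task is then to check that the piecewise affine extension on each annulus is bi-Lipschitz with a constant uniform in $k$, since the bi-Lipschitz constant of $v$ is the supremum of these per-annulus constants. This splits into two ingredients: first, an \emph{elementary extension lemma} showing that a bi-Lipschitz boundary parametrization between two nested polygons of comparable geometry admits a piecewise affine bi-Lipschitz extension with a constant bounded polynomially in the input; second, a \emph{scale-invariant construction of the shells themselves}, namely the fact that the $\mathcal{P}_k$ and $\mathcal{Q}_k$ can always be chosen with uniformly bounded aspect ratio, comparable inner and outer scales, and uniformly bi-Lipschitz intermediate maps $u_k$. I expect this second ingredient to be the true heart of the argument: it requires exploiting the bi-Lipschitz property of $u$ to rule out pathological configurations of $u(\partial\D)$ at each scale, such as long thin tongues or near self-tangencies, and forces a careful geometric case analysis from which the explicit power of $L$ in $\widetilde L$ should emerge.
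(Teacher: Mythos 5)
Your outer reduction (approximate a general $u$ by a piecewise affine boundary map with controlled constant, extend, and pass to a limit by compactness) is indeed how the paper handles general data, but note that a uniform limit of piecewise affine extensions is only bi-Lipschitz: the ``countably piecewise affine'' clause of the statement does not survive the limit, so even granting your polygonal case you would only recover the first half of the theorem without an extra argument (the paper needs a separate approximation theorem, or a modification of its construction, to restore that property).

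The substantive gap, however, is in the polygonal case itself. Your proof rests entirely on the ``scale-invariant construction of the shells'', i.e.\ on the claim that one can choose nested image polygons $\mathcal{Q}_k\subseteq\Delta$ with uniformly bounded aspect ratio, inner and outer scales comparable at each step, and intermediate boundary parametrizations $u_k$ that are bi-Lipschitz with a constant depending only on $L$. This is not a technical verification you have deferred; it is the whole theorem, and as literally stated it is false: if $\Delta$ is, say, an $L\times L^{-1}$ rectangle or a thin spiral (both perfectly admissible images of an $L$ bi-Lipschitz boundary map), no polygon of diameter comparable to $\operatorname{diam}\Delta$ with bounded aspect ratio fits inside $\Delta$, so the shells $\mathcal{Q}_k$ must follow the (arbitrary) geometry of $\Delta$ rather than be ``round''. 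Once you allow that, producing the inner curve of a single shell together with a uniformly bi-Lipschitz correspondence to a concentric polygon in $\D$ is exactly the problem of extending $u$ a definite amount into the interior, and the difficulties you wave at (``long thin tongues or near self-tangencies'') are precisely what forces the heavy machinery: in a spiral-shaped region, boundary points at distance $\eps$ have inward paths whose lengths differ by an arbitrarily large multiple of $\eps$, so any ``short combinatorial dictionary'' matching vertices at constant speed produces triangles with arbitrarily small angles and an unbounded bi-Lipschitz constant. The paper's construction of a single such ``shell'' occupies Steps I--VII: a central ball whose boundary contacts $\partial\Delta$ correctly, a decomposition into sectors, a triangulation of each sector, interior paths whose total length is bounded by a fixed multiple of the boundary arc (this is where spiraling is controlled, and it is a genuinely delicate estimate), and non-constant speed functions along those paths chosen so that neighboring paths stay compatible. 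Your elementary extension lemma plus combinatorial matching assumes all of this has already been done. So the proposal is a plausible outline of a different (annular, iterated) architecture, but its key lemma is unproven, needs reformulation to be even potentially true, and contains the entire difficulty of the result.
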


Unfortunately, in the above result there is no explicit dependence of $\widetilde L$ on $L$, due to the fact that the existence of such $\widetilde L$ is obtained by compactness arguments. On the other hand, it is clear that in many situations one may need to have an explicit upper bound for $\widetilde L$. In particular, it would be interesting to understand whether the theorem may be true with $\widetilde L = L$, or at least $\widetilde L = C L$ for some geometric constant $C$. In this paper we prove that it is possible to bound $\widetilde L$ with $CL^4$. More precisely, our main result is the following.

\maintheorem{main}{bi-Lipschitz extension, piecewise affine case}{Let $u:\partial\D\to \R^2$ be an $L$ bi-Lipschitz and piecewise affine map. Then there exists a piecewise affine extension $v:\D\to\R^2$ which is $CL^4$ bi-Lipschitz, being $C$ a purely geometric constant. Moreover, there exists also a smooth extension $v:\D\to\R^2$, which is $C' L^{28/3}$ bi-Lipschitz.}

We can also extend the result of Theorem~\mref{main} to general maps $u$. Notice that, if $u$ is not piecewise affine on $\partial\D$, then of course it is not possible to find an extension $v$ which is (finitely) piecewise affine.
\maintheorem{maingen}{bi-Lipschitz extension, general case}{Let $u:\partial\D\to\R^2$ be an $L$ bi-Lipschitz map. Then there exists an extension $v:\D\to\R^2$ which is $C''L^4$ bi-Lipschitz, being $C''$ a purely geometric constant.}
Also in the general case, one may want the extending function $v$ to be either smooth or countably piecewise affine: we deal with this issue at the end of the paper, in Corollary~\ref{allast} and Remark~\ref{lastsec}. In particular, the constants $C, C'$ and $C''$ of Theorems~\mref{main} and~\mref{maingen} can be bounded as follows
\begin{align*}
C=636000\,, && C'= 70 C^{7/3}\,, && C'' = 81 C\,.
\end{align*}

Our proof of Theorem~\mref{main} is constructive and for this reason it is quite intricate. However, the overall idea is simple and we try to keep it as clear as possible.\par
The plan of the paper is the following. In Section~\ref{secpro} we briefly describe the construction that we will use to show Theorem~\mref{main}, and in Section~\ref{secnot} we fix some notation. Then, in Section~\ref{secmain} we give the proof of Theorem~\mref{main}. This section contains almost the whole paper, and it is subdivided in several subsections which correspond to the different steps of the proof. Finally, in Section~\ref{secmaingen} we show Theorem~\mref{maingen}, which follows from Theorem~\mref{main} thanks to an approximation argument.

\subsection{An overview of the proof of Theorem~\mref{main}\label{secpro}}

Let us briefly explain how the proof of Theorem~\mref{main} works. Given a bi-Lipschitz function $u:\partial\D\to\R^2$, its image is the boundary $\partial\Delta$ of a bounded Lipschitz domain $\Delta\subseteq\R^2$ (since $u$ is piecewise affine, in particular $\Delta$ is a polygon). Then, the extension must be a bi-Lipschitz function $v:\D\to\Delta$.\par\smallskip

First of all (Step~I) we determine a ``central ball'' $\widehat{\u\B}$, which is a suitable ball contained in $\Delta$ and whose boundary touches the boundary of $\Delta$ in some points $\u{A}_1,\, \u{A}_2,\, \dots ,\, \u{A}_N$, being $N\geq 2$. The image through $v$ of the central part of the square $\D$ will eventually be contained inside this central ball.\par
For any two consecutive points $\u{A}_i,\, \u{A}_{i+1}$ among those just described, we consider the part of $\Delta$ which is ``beyond'' the segment $\u{A}_i\u{A}_{i+1}$ (by construction, this segment lies in the interior of $\Delta$). We call ``primary sectors'' these regions, and we give the formal definition and study their main properties in Step~II. It is to be observed that the set $\Delta$ is the disjoint union of these primary sectors and of the ``internal polygon'' having the points $\u{A}_i$ as vertices (see Figure~\ref{Fig:ske} for an example).\par
We start then considering a given sector, with the aim of defining an extension of $u$ which is bi-Lipschitz between a suitable subset of the square $\D$ and this sector. In order to do so, we first give a method (Step~III) to partition a sector in triangles. Then, using this partition, for any vertex $\u{P}$ of the boundary of the sector we define a suitable piecewise affine path $\gamma$, which starts from $\u{P}$ and ends on a point $\u{P'}$ on the segment $\u{A}_i\u{A}_{i+1}$ (Step~IV). We also need a bound on the lengths of these paths, found in Step~V.\par
Then we can define our extension. Basically, the idea is the following. Take any point $P\in\partial\D$ such that $\u{P}:=u(P)$ is a vertex of $\partial\Delta$ inside our given sector. Denoting by $O$ the center of the square $\D$, we send the first part of the segment $PO$ of the square (say, a suitable segment $PP'\subseteq PO$) onto the path $\gamma$ found in Step~IV, while the last part $P'O$ of $PO$ is sent onto the segment connecting $\u{P}'$ with a special point $\u{O}$ of the central ball $\widehat{\u{\B}}$ (in most cases $\u{O}$ will be the center of $\widehat{\u{\B}}$). Unfortunately, this method does not work if we simply send $PP'$ onto $\gamma$ at constant speed; instead, we have to carefully define speed functions for all the different vertices $\u{P}$ of the sector, and the speed function of any point will affect the speed functions of the other points. This will be done in Step~VI.\par
At this stage, we have already defined the extension $v$ of $u$ on many segments of the square, thus it is easy to extend $v$ so to cover the whole primary sectors. To define formally this map, and in particular to check that it is $CL^4$ bi-Lipschitz, is the goal of Step~VII. Finally, in Step~VIII, we put together all the maps for the different primary sectors and fill also the ``internal polygon'', still keeping the bi-Lipschitz property. The whole construction is done in such a way that the resulting extending map $v$ is piecewise affine. Hence, to conclude the proof of Theorem~\mref{main}, we will only have (Step~IX) to show the existence of a smooth extension $v$. This will be obtained from the piecewise affine map thanks to a recent result by Mora-Corral and the second author in~\cite{MP}, see Theorem~\ref{carlos}.

\subsection{Notation\label{secnot}}

In this short section, we briefly fix some notation that will be used throughout the paper, and in particular in the proof of Theorem~\mref{main}, Section~\ref{secmain}. We list here only the notation which is common to all the different steps.\par

We call $\D=(-1/2,1/2)^2$ the open unit square in $\R^2$, and $O=(0,0)$ its center. The function $u$ is a bi-Lipschitz function from $\partial\D$ to $\R^2$, and $L$ is a bi-Lipschitz constant, according to~(\ref{biL}). The image $u(\partial\D)$ is a Jordan curve in the plane, therefore it is the boundary of a bounded open set, that we call $\Delta$. Notice that an extension $v$ as required by Theorems~\mref{main} and~\mref{maingen} must necessarily be such that $v(\D) = \Delta$.\par
The points of $\overline{\D}$ will be always denoted by capital letters, such as $A,\, B,\, P,\, Q$ and so on. On the other hand, points of $\overline{\Delta}$ will be always denoted by bold capital letters, such as $\u A,\, \u B,\, \u P,\, \u Q$ and similar. To shorten the notation and help the reader, whenever we use the same letter for a point in $\partial \D$ and (in bold) for a point in $\partial\Delta$, say $P\in\partial\D$ and $\u{P}\in\partial\Delta$, this always means that $u(P)=\u{P}$. Similarly, whenever the same letter refers to a point $P$ in $\D$ and (in bold) to a point $\u{P}$ in $\Delta$, this always means that the extension $v$ that we are constructing is done in such a way that $v(P)=\u{P}$.\par
For any two points $P,\, Q\in \D$, we call $PQ$ and $\ell(PQ)$ the segment connecting $P$ and $Q$ and its length. In the same way, for any $\u{P},\,\u{Q}\in\Delta$, by $\u{PQ}$ and by $\lsegm{PQ}$ we will denote the segment joining $\u{P}$ and $\u{Q}$ and its length. Since $\Delta$ is not, in general, a convex set, we will use the notation $\u{PQ}$ only if the segment $\u{PQ}$ is contained in $\Delta$.\par
For any $P,\, Q\in \partial\D$, we call $\arc{PQ}$ the shortest path inside $\partial\D$ connecting $P$ and $Q$, and by $\ell(\arc{PQ})\in [0,2]$ its lenght. Notice that $\arc{PQ}$ is well-defined unless $P$ and $Q$ are opposite points of $\partial\D$. In that case, the length $\ell(\arc{PQ})$ is still well-defined, being $2$, while the notation $\arc{PQ}$ may refer to any of the two minimizing paths (and we write $\arc{PQ}$ only after having specified which one). Accordingly, given two points $\u{P}$ and $\u{Q}$ on $\partial\Delta$, we write $\arc{\u{PQ}}$ to denote the path $u\big(\arc{PQ}\big)$, which is not necessarily the shortest path between $\u{P}$ and $\u{Q}$ in $\partial\Delta$. Observe that, if $u$ is piecewise affine on $\partial\D$, then $\arc{\u{PQ}}$ is a piecewise affine path for any $\u{P}$ and $\u{Q}$ in $\partial\Delta$.\par
Given a point $P\in\R^2$ and some $\rho>0$, we will call $\B(P,\rho)$ the open ball centered at $P$ with radius $\rho$. Given three non-aligned points $P$, $Q$ and $R$, we will call $\angle PQR\in (0,2\pi)$ the corresponding angle. Sometimes, for the ease of presentation, we will write the value of angles in degrees, with the usual convention that $\pi = 180^\circ$.\par
Finally, we will extensively use the following concepts. The \emph{central ball $\widehat{\u{\B}}$} is introduced in Step~I, while the  \emph{sectors} and the \emph{primary sectors} are introduced in Step~II. Moreover, in Step~III a partition of a sector in triangles is defined, where the triangles are suitably partially ordered and each triangle has its \emph{exit side}.

\section{The proof of Theorem~\mref{main}\label{secmain}}

In this section, which is the most extensive and important part of the paper, we show Theorem~\mref{main}. The proof is divided in several subsections, to distinguish the different main steps of the construction.

\bigstep{I}{Choice of a suitable ``central ball'' $\widehat{\u{\B}}$}

Our first step consists in determining a suitable ball, that will be called ``central ball'', whose interior is contained in the interior of $\Delta$, and whose boundary touches the boundary of $\partial\Delta$. Before starting, let us briefly explain why we do so. Consider a very simple situation, i.e. when $\Delta$ is convex. In this case, the easiest way to build an extension $u$ as required by Theorem~\mref{main} is first to select a point $\u{O}=v(O)$ having distance of order at least $1/L$ from $\partial\Delta$, and then to define the obvious piecewise affine extension of $u$, that is, for any two consecutive vertices $P,\,Q\in\partial\,\D$ we send the triangle $OPQ$ onto the triangle $\u{O}\u{P}\u{Q}$ in the affine way. This very coarse idea does not suit the general case, because in general $\Delta$ can be very complicated and, a priori, there is no reason why the triangle $\u{O}\u{P}\u{Q}$ should be contained in $\overline{\Delta}$. Nevertheless, our construction will be somehow reminiscent of this idea. In fact, we will select a suitable point $\u{O}=u(O)\in\Delta$ in such ``central ball'' and we will build the image of a triangle like $OPQ$ as a ``triangular shape'', suitably defining the ``sides'' $\u{O}\u{P}$ and $\u{O}\u{Q}$ which will be, in general, piecewise affine curves instead of straight lines. Thanks to the fact that the ``central ball'' is a sufficiently big convex subset of $\Delta$, in a neighborhood of $\u{O}$ of order at least $1/L$ the construction will be eventually carried out as in the convex case (in Step~VIII).\par\smallskip

The goal of this step is only to determine a suitable ``central ball'' $\widehat{\u{\B}}$. The actual point $\u{O}$ will be chosen only in Step~VIII, and it will be in the interior of this ball --in fact, in most cases $\u{O}$ will be the center of $\widehat{\u{B}}$.

\begin{lemma}\label{lemma:center}
There exists an open ball $\widehat{\u{\B}}\subseteq\Delta$ such that the intersection $\partial\widehat{\u{\B}}\cap \partial\Delta$ consists of $N\geq 2$ points $\u{A}_1,\, \u{A}_2,\, \dots \u{A}_N$, taken in the anti-clockwise order on the circle $\partial\widehat{\u{\B}}$, and with the property that $\partial\D$ is the union of the paths $\arc{A_iA_{i+1}}$, with the usual convention $N+1\equiv 1$.
\end{lemma}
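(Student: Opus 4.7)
The plan is to choose $\widehat{\u{\B}}$ as a disk of maximal inradius inscribed in $\Delta$, selected, if necessary, within the family of such maximizers so as to enforce the covering of $\partial\D$ by the shortest arcs $\arc{A_iA_{i+1}}$.

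First I set $r^*:=\sup\{r>0:\B(\u{O},r)\subseteq\Delta\text{ for some }\u{O}\in\Delta\}$; since $\Delta$ is open, nonempty and bounded, $0<r^*<\infty$, and by compactness the supremum is attained at some center $\u{O}^*\in\Delta$. I then set $\widehat{\u{\B}}:=\B(\u{O}^*,r^*)$. By maximality, $\partial\widehat{\u{\B}}\cap\partial\Delta\ne\emptyset$; moreover, a single contact point would allow a translation of $\u{O}^*$ in the opposite direction followed by a slight enlargement, contradicting the definition of $r^*$, so $N\geq 2$. The same shifting argument shows that $\u{O}^*$ must lie in the convex hull of $\{\u{A}_1,\dots,\u{A}_N\}$, so no angular gap between consecutive contact points on $\partial\widehat{\u{\B}}$ exceeds $\pi$. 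Enumerating these points anti-clockwise on $\partial\widehat{\u{\B}}$ and pulling them back via $u^{-1}$ gives points $A_i\in\partial\D$ in the corresponding cyclic order, because $u$ is a homeomorphism between the Jordan curves $\partial\D$ and $\partial\Delta$ and $\widehat{\u{\B}}$ lies in the bounded component of $\R^2\setminus\partial\Delta$; they split $\partial\D$ into $N$ arcs $\gamma_i$, from $A_i$ to $A_{i+1}$, with $\sum_i\ell(\gamma_i)=4$.

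The hard step will be the identity $\partial\D=\bigcup_i\arc{A_iA_{i+1}}$, which is equivalent to the balance condition $\ell(\gamma_i)\leq 2$ for every $i$, so that each $\gamma_i$ is the shortest of the two sub-arcs of $\partial\D$ joining $A_i$ and $A_{i+1}$. A bare maximal inscribed disk does not automatically guarantee this (for instance in an elongated polygon the maximizers form a one-parameter family, and only some of them distribute the $A_i$ evenly). To take care of it, I shall exploit that the set $\mathcal{C}\subseteq\Delta$ of centers achieving $r^*$ is compact and connected, being the maximum level set of the continuous function $\u{O}\mapsto d(\u{O},\partial\Delta)$, and that along $\mathcal{C}$ the contact points---and hence the arc-lengths $\ell(\gamma_i)$---depend continuously on $\u{O}^*$ while their sum $\sum_i\ell(\gamma_i)$ stays equal to $4$. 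A continuity/compactness argument on $\mathcal{C}$ then selects a center at which no single $\gamma_i$ exceeds $2$. When $\mathcal{C}$ collapses to a single point, maximality forces $\u{O}^*$ to lie in the strict interior of the convex hull of $N\geq 3$ contact points, a rigid configuration from which the balance can be either read off directly or restored by slightly shrinking the disk to unlock some sliding freedom and then repeating the continuity argument.
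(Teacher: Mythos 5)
Your opening steps (existence of a maximal inscribed ball, $N\geq 2$, cyclic ordering of the $A_i$) are correct, but they are the easy part; the heart of the lemma is the balance condition $\ell(\gamma_i)\leq 2$, and there your approach breaks down for a structural reason: the required property concerns how the contact points distribute on $\partial\D$ under $u^{-1}$, and a ball chosen only by maximizing the inradius of $\Delta$ carries no information whatsoever about that. Concretely, let $\Delta$ be a large roundish region with a very long, thin tentacle attached, parametrized so that the tentacle's boundary uses up more than half of the length of $\partial\D$. Then the (unique) largest inscribed ball sits in the round part, all of its contact points pull back to a sub-arc of $\partial\D$ of length strictly less than $2$, and the complementary arc of length greater than $2$ contains no $A_i$; the covering of $\partial\D$ by the minimal arcs $\arc{A_iA_{i+1}}$ fails, and it fails as well for every ball obtained from this one by sliding or slight shrinking, since the correct ball in this example is a much smaller one sitting inside the tentacle (or touching both the tentacle and the main body), far away in both position and radius. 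This is exactly why the paper maximizes a different quantity: over all pairs $(\u{A},\u{B})\in\partial\Delta\times\partial\Delta$ lying on the boundary of some ball contained in $\Delta$, it maximizes the preimage separation $\ell(\arc{AB})$, then among the admissible balls for the optimal pair takes one of maximal radius, and shows by a continuity argument on nearest-point projections that its contact set must contain a third point $\u{P}$ outside $\arc{\u{AB}}$; the three preimages $A$, $B$, $P$ then cut $\partial\D$ into minimal arcs, which is precisely the criterion of Remark~\ref{rem-check}.

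In addition, the specific claims you invoke to repair the gap are false or unsupported. The set $\mathcal{C}$ of inradius-maximizing centers is the argmax of the distance-to-boundary function, which for a non-convex $\Delta$ need not be connected (a dumbbell-shaped polygon has two far-apart maximal disks); being ``the maximum level set of a continuous function'' gives closedness, not connectedness. The contact set does not depend continuously on the center either: contact points appear and disappear abruptly as the center moves, so the arc lengths $\ell(\gamma_i)$ can jump along $\mathcal{C}$. And even granting both connectedness and continuity, you offer no intermediate-value-type reason why some point of $\mathcal{C}$ should balance all the arcs simultaneously; the closing sentence about reading the balance off a ``rigid configuration'' or ``shrinking the disk to unlock some sliding freedom'' is a hope, not an argument, and the tentacle example above shows it cannot be made to work in general.
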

\begin{remark}\label{rem-check}
Before giving the proof of our lemma, some remarks are in order. First of all, since the ball $\widehat{\u{\B}}$ is contained in $\Delta$, then $\partial \Delta\cap\widehat{\u{\B}}=\emptyset$. As a consequence, the path $\partial\Delta$ meets all the points $\u{A}_i$ in the same order as $\partial\widehat{\u{\B}}$, hence also the points $A_i\in\partial\D$ are in the anti-clockwise order (since $u$ is orientation preserving). Hence, the thesis is equivalent to say that for each $i$, among the two injective paths connecting $A_i$ and $A_{i+1}$ on $\partial\D$, the anti-clockwise one is shorter than the other.\par
In addition, notice that from the thesis one has two possibilities. If $N=2$, then necessarily $\ell(A_1A_2)=2$, so that the two paths $\arc{A_1A_2}$ and $\arc{A_2A_1}$ have the same length. On the other hand, if $N\geq 3$, then it is immediate to observe that there must be two points $A_i$ and $A_j$, not necessarily consecutive, such that $\ell\big(\arc{A_iA_j}\big)\geq 4/3$. By the bi-Lipschitz property of $u$, this ensures that the radius of $\widehat{\u{\B}}$ is at least $\frac{2}{3L}$, since the circle $\partial\widehat{\u{\B}}$ contains two points having distance at least $\frac{4}{3L}$.\par
Finally notice that, given a ball $\B$ contained in $\Delta$ and such that $\partial\Delta\cap\partial\B$ contains at least two points, there is a simple method to check whether $\widehat{\u{\B}}=\B$ satisfies all the requirements of Lemma~\ref{lemma:center}. Indeed, this is easily seen to be true unless there is an arc of length $2$ in $\partial\D$ whose image does not contain any point of $\partial\Delta\cap\partial\B$.
\end{remark}

\proofof{Lemma~\ref{lemma:center}}
First of all, we define the symmetric set 
\begin{equation*}
 S=\Big\{(\u{A},\u{B})\subseteq\partial\Delta\times\partial\Delta:\,\u{A}\neq\u{B}  \text{ and }\exists \text{ a ball }\B\subseteq\Delta\text{ s.t. }\{\u{A},\u{B}\}\subseteq\partial\B\cap\partial\Delta\Big\}\,.
\end{equation*}
This set is nonempty, since for instance the biggest ball contained inside $\Delta$ contains at least two points of $\partial\Delta$ in its boundary. Since for any $\delta>0$ the set
\[
\big\{(\u{A},\u{B})\in S:\,\ell(\arc{{AB}})\geq \delta \big\}
\]
is compact, we can select a pair $(\u A, \u B)$ maximizing $\ell(\arc{{AB}})$. We then distinguish two cases. If $\ell(\arc{{AB}})=2$, then by Remark \ref{rem-check} any ball $\CB$ such that $\{\u{A},\u{B}\}\subseteq\partial\CB\cap\partial\Delta$ satisfies our claim.\par

Suppose then that $\ell(\arc{AB})<2$. Since by definition there are balls $\B\subseteq \Delta$ such that $\{\u{A},\u{B}\}\subseteq\partial\Delta\cap\partial\B$, we let $\CB$ to be one of such balls maximizing the radius. We will conclude the thesis by checking that $\CB$ satisfies all the requirements. In particular, we will make use of the following\par\medskip\noindent
{\underline{\it Claim.}}\emph{ There is some point $\u{P}\in \partial \CB\cap \partial \Delta \setminus \arc{\u{AB}}$.}\par

Let us first observe that the thesis readily follows from this claim, then we will show its validity. In fact, let $\u{P}$ be a point in $\partial \CB \cap \partial \Delta \setminus \arc{\u{AB}}$, and consider the three points $A,\, B$ and $P$ on $\partial\D$ and the corresponding paths $\arc{AB}$, $\arc{AP}$ and $\arc{BP}$. Since $P\not\in \arc{AB}$ by construction, by the maximality of $\ell(\arc{AB})$ we derive that $\arc{AP}$ does not contain $B$, and similarly $\arc{BP}$ does not contain $A$. Thus, $\partial\D$ is the (essentially disjoint) union of the three paths $\arc{AB}$, $\arc{AP}$ and $\arc{BP}$. But then, if we take any path of length $2$ in $\partial\D$, this intersects at least one between $A,\,B$ and $P$. Thanks to the last observation of Remark~\ref{rem-check}, this shows the thesis.\par
\begin{figure}[htbp]
\begin{center}
\input{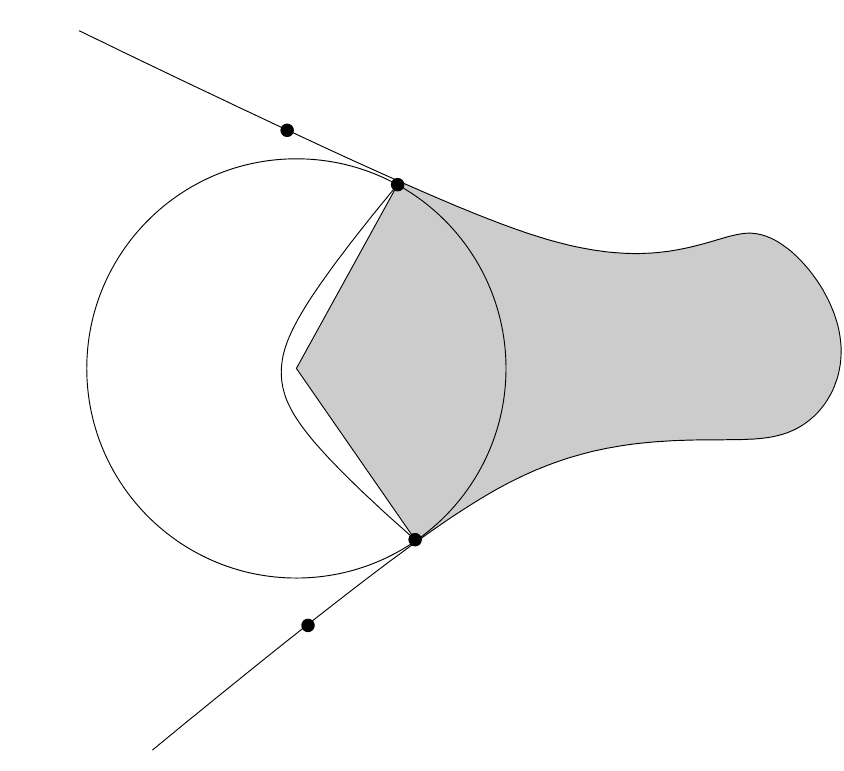_t}\vspace{-10pt}
\caption{Geometric situation for the Claim in the proof of Lemma~\ref{lemma:center}.}\label{Fig:step1conclusion} 
\end{center}
\end{figure}
Let us now prove the claim. Call, as in Figure~\ref{Fig:step1conclusion}, $\u{A'}$ and $\u{B'}$ two points of $\partial \Delta$ sufficiently close to $\u{A}$ and $\u{B}$ respectively, so that $\arc{\u{A'B'}}\supseteq \arc{\u{AB}}$ (here we use the fact that $\ell(\arc{AB})<2$). Let now $\Gamma$ be the path connecting $\u{A}$ and $\u{B}$ obtained as union of the two radii of $\widehat{\u{\B}}$ passing through $\u{A}$ and $\u{B}$; moreover, let $\widetilde\Gamma$ be another path connecting $\u{A}$ and $\u{B}$ inside $\widehat{\u{\B}}$, close to $\Gamma$ but contained out of the subset of $\Delta$, coloured in the figure, having $\arc{\u{AB}} \cup \Gamma$ as its boundary. For any $\u{Q}\in\widetilde\Gamma$, consider a point $\u{R}\in\partial\Delta$ minimizing $\ell(\u{QR})$. By construction, $\u{R}$ cannot belong to the open path $\arc{\u{AB}}$; moreover, if we assume that the claim is false, then by continuity $\u{R}$ must belong either to $\arc{\u{AA'}}$ or to $\arc{\u{BB'}}$, provided that $\widetilde\Gamma$ is chosen sufficiently close to $\Gamma$. Of course, if $\u{Q}\in\widetilde\Gamma$ is close to $\u{A}$ (resp. $\u{B}$), then so is $\u{R}$. Therefore, by continuity, there exists some $\u{Q}\in\widetilde\Gamma$ for which there are two points $\u{R_A}$ and $\u{R_B}$ minimizing the length $\ell(\u{QR})$ within $\partial\Delta$, with $\u{R_A}\in \arc{\u{AA'}}$ and $\u{R_B}\in \arc{\u{BB'}}$. Let then $\u{\B'}$ be the ball centered in $\u{Q}$ and with radius $\ell(\u{QR_A})$. By definition, this ball is contained inside $\Delta$, then we have that $(\u{R_A},\u{R_B})\in S$. Moreover, since both $\u{R_A}$ and $\u{R_B}$ belong to $\arc{\u{A'B'}}$, then $\arc{\u{R_AR_B}}\supseteq \arc{\u{AB}}$, hence $\ell(\arc{R_AR_B})\geq \ell(\arc{AB})$. This gives a contradiction with the maximality of $\ell(\arc{AB})$, unless $\u{R_A}=\u{A}$ and $\u{R_B}=\u{B}$. But also in this case we have a contradiction, because $\u{\B'}$ is a ball contained in $\Delta$, having $\u{A}$ and $\u{B}$ in its boundary, and with radius strictly bigger than that of $\CB$. This shows the validity of the Claim, thus concluding the proof.
\end{proof}

\bigstep{II}{Definition and first properties of the ``sectors'' and of the ``primary sectors''}

In this step, we will give the definition of ``sectors'' of $\Delta$, we will study their main properties, and we will call some of them ``primary sectors''. Let us start with some notation.
\begin{definition}
Let $\u{A}$ and $\u{B}$ be two points in $\partial\Delta$ such that the open segment $\u{AB}$ is entirely contained in the interior of $\Delta$. Let moreover $\arc{\u{AB}}$ be, as usual, the image under $u$ of the shortest path connecting $A$ and $B$ on $\partial\D$ (or of a given one of the two injective paths, if $A$ and $B$ are opposite). We will call \emph{sector between $\u{A}$ and $\u{B}$}, and denote it as $\DoubleS(\u{AB})$, the subset of $\Delta$ enclosed by the closed path $\u{AB} \cup \arc{\u{AB}}$.
\end{definition}

\begin{remark}\label{sottosettori}
It is useful to notice what follows. If $\u{A},\,\u{B},\, \u{C},\,\u{D} \in \partial\Delta$, and $\u{C},\,\u{D} \in \arc{\u{AB}}$, then $\arc{\u{CD}}\subseteq \arc{\u{AB}}$. Moreover, if both the open segments $\u{AB}$ and $\u{CD}$ lie in the interior of $\Delta$, then one also has
\[
\DoubleS(\u{CD}) \subseteq \DoubleS(\u{AB})\,.
\]
\end{remark}

We observe now a very simple property, which will play a crucial role in our future construction, namely that the length of a shortest path in $\partial\D$ can be bounded by the length of the corresponding segment in $\Delta$.

\begin{lemma}
Let $\u{P},\,\u{Q}$ be two points in $\partial\Delta$ such that the segment $\u{P}\u{Q}$ is contained in $\overline\Delta$. Then one has
\begin{equation}\label{stimaL^2}
\ell\big(\arc{PQ}\big) \leq \sqrt{2} L\,\ell (\u{PQ})\,.
\end{equation}
\end{lemma}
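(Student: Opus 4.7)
The plan is to reduce the inequality to a purely intrinsic statement on the boundary of the unit square $\partial\D$, using the $L$-bi-Lipschitz hypothesis, and then to verify that intrinsic statement by a short case analysis.

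First I would apply the lower bi-Lipschitz bound to the pair $P,Q\in\partial\D$ to get
$$|P-Q|\;\leq\;L\,|\u P-\u Q|\;=\;L\,\ell(\u{PQ}).$$
So it suffices to prove the purely geometric inequality $\ell(\arc{PQ})\leq \sqrt{2}\,|P-Q|$ for points on $\partial\D$ satisfying the hypothesis. I would check this by cases according to the location of $P$ and $Q$ on $\partial\D$. If they lie on the same side of the square, then $\ell(\arc{PQ})=|P-Q|$ and there is nothing to show. If they lie on two adjacent sides, then the shortest arc $\arc{PQ}$ crosses a unique corner and splits into two perpendicular legs of lengths $a$ and $b$, so that $\ell(\arc{PQ})=a+b$ and $|P-Q|=\sqrt{a^{2}+b^{2}}$; the inequality then reduces to Cauchy--Schwarz, $a+b\leq \sqrt{2}\sqrt{a^{2}+b^{2}}$.

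The remaining, and genuinely delicate, case is when $P$ and $Q$ lie on two opposite sides of $\partial\D$, so that the shortest arc crosses two corners. A direct calculation on the square alone would give a ratio $\ell(\arc{PQ})/|P-Q|$ that can be as large as $2$ (attained at midpoints of opposite sides), so here one must exploit the hypothesis that the chord $\u{PQ}$ lies in $\overline\Delta$: combined with the $L$-bi-Lipschitz property of $u$, this condition should rule out precisely the extremal configurations that would otherwise spoil the constant $\sqrt{2}$. Concretely, I would parametrize $P=(1/2,y_1)$, $Q=(-1/2,y_2)$ and analyze how the chord position inside $\overline\Delta$ forces a lower bound on $\ell(\u{PQ})$ better than the bare bi-Lipschitz estimate.

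Chaining the two estimates then yields
$$\ell(\arc{PQ})\;\leq\;\sqrt{2}\,|P-Q|\;\leq\;\sqrt{2}\,L\,\ell(\u{PQ}),$$
which is the desired bound. The main obstacle I expect is the opposite-sides case: one cannot get away with bare square geometry there, and the $\u{PQ}\subseteq\overline\Delta$ hypothesis must be combined with the bi-Lipschitz condition to save the factor of $\sqrt{2}$ over the trivial constant $2$.
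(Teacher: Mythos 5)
Your first step and your first two cases are exactly the paper's argument: the paper proves the lemma by the single chain $\ell(\arc{PQ}) \le \sqrt{2}\,\ell(PQ) \le \sqrt{2}\,L\,\ell(\u{PQ})$, i.e.\ a purely geometric estimate on $\partial\D$ followed by the lower bi-Lipschitz bound, with no case distinction and with no use of the hypothesis that $\u{PQ}\subseteq\overline\Delta$. The genuine gap in your proposal is the opposite-sides case: you only describe a plan (``analyze how the chord position inside $\overline\Delta$ forces a better lower bound on $\ell(\u{PQ})$'') and never carry it out, and in fact no such argument exists, because the hypothesis does not rescue the constant $\sqrt{2}$. Take $u$ to be the identity on $\partial\D$, so that $L=1$ and $\Delta=\D$, and let $P=(1/2,0)$, $Q=(-1/2,0)$ be the midpoints of two opposite sides: the segment $\u{PQ}$ is contained in $\overline\Delta$, yet $\ell(\arc{PQ})=2$ while $\sqrt{2}\,L\,\ell(\u{PQ})=\sqrt{2}$. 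So the case you postponed is not merely delicate; it is the case in which the stated inequality fails, and your computation that the ratio on the square can reach $2$ at opposite midpoints is already the end of the story.

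In other words, your case analysis is more careful than the paper's own one-line proof, whose asserted inequality $\ell(\arc{PQ})\le\sqrt{2}\,\ell(PQ)$ is false precisely in the configuration you isolated; but as a proof of the lemma as stated your proposal is incomplete, and necessarily so. What your two completed cases do yield is the unconditional bound $\ell(\arc{PQ})\le 2\,\ell(PQ)$ (for opposite sides the chord is at least $1$ and the arc at most $2$), hence $\ell(\arc{PQ})\le 2L\,\ell(\u{PQ})$ with no assumption on the chord at all; and the constant $\sqrt{2}\,L$ is correct whenever the shortest boundary arc $\arc{PQ}$ crosses at most one corner of $\partial\D$. A correct write-up therefore has to either settle for the constant $2L$ (accepting slightly worse numerical constants wherever the lemma is invoked later), or add the one-corner restriction and verify it in each application; it cannot keep both the constant $\sqrt{2}\,L$ and the full generality of the statement.
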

\begin{proof}
The inequality simply comes from the Lipschitz property of $u$, and from the fact that $\D$ is a square. Indeed,
\[
\ell\big(\arc{PQ}\big) \leq \sqrt{2} \,\ell(PQ) \leq \sqrt{2} L\, \ell (\u{PQ} ) \,.
\]
\end{proof}

\begin{remark}
We observe that, of course, the estimate~(\ref{stimaL^2}) holds true because $\arc{PQ}$ is the shortest path between $P$ and $Q$ in $\partial\D$ (however, this does not necessarily imply that $\arc{\u{PQ}}$ is the shortest path between $\u{P}$ and $\u{Q}$ in $\partial\Delta$). The validity of the estimate \eqref{stimaL^2} is the reason why we had to perform the construction of Step~I so as to find points $A_j$ on $\partial\Delta$ such that each path $\arc{A_i A_{i+1}}$ does not pass through the other points $A_j$. 
\end{remark}

We can now define the ``primary sectors'', which are the sectors between the consecutive points $\u{A}_i$ given by Lemma~\ref{lemma:center}.
\begin{definition}
We call \emph{primary sector} each of the sectors $\DoubleS(\u{A}_i\u{A}_{i+1})$, being $\u{A}_j$ the points obtained by Lemma~\ref{lemma:center}.
\end{definition}
\begin{figure}[htbp]
\begin{center}
\input{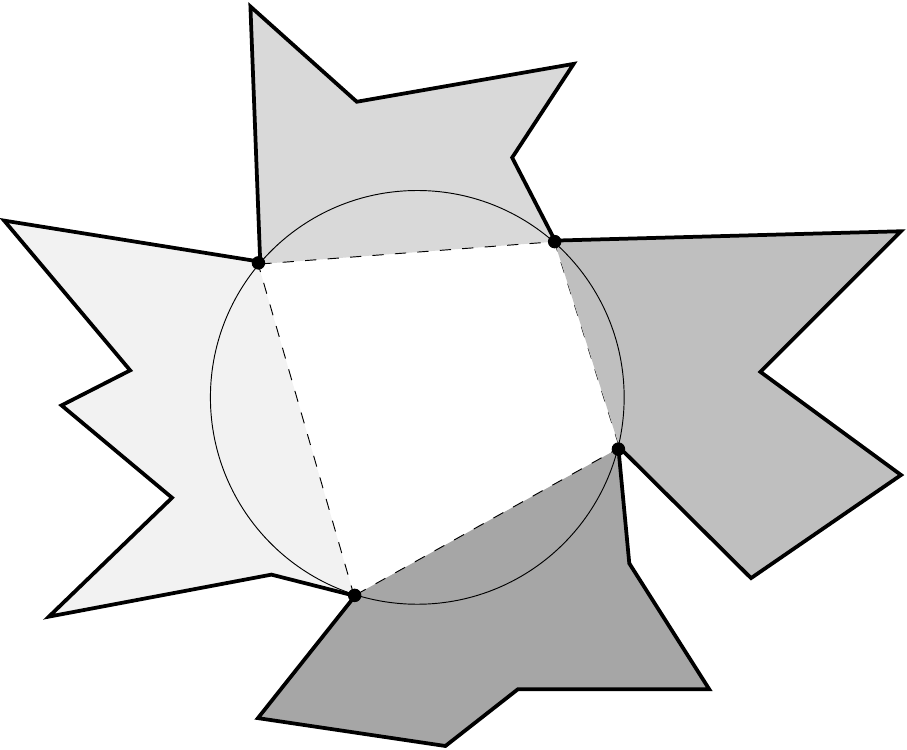_t}\vspace{-10pt}
\caption{A set $\Delta$ with four (coloured) primary sectors.}\label{Fig:ske} 
\end{center}
\end{figure}
Notice that the above definition makes sense, because the points $\u{A}_i$ are all on the boundary of $\widehat{\u{\B}}$ and $\widehat{\u{\B}}$ does not intersect $\partial\Delta$, thus the open segments $\u{A}_i\u{A}_{i+1}$ are entirely contained in the interior of $\Delta$. Moreover, by the claim of Lemma~\ref{lemma:center} it follows that the sectors $\DoubleS(\u{A}_i\u{A}_{i+1})$ are essentially pairwise disjoint. The set $\Delta$ is thus the essentially disjoint union of the sectors $\DoubleS(\u{A}_i\u{A}_{i+1})$ and of the polygon $\u{A}_1\u{A}_2\dots \u{A}_N$, as Figure~\ref{Fig:ske} illustrates.

\bigstep{III}{Partition of a sector in triangles}

In view of the preceding steps, we aim to extend the function $u$ in order to cover a whole given sector. This extension of the function $u$, which is the main part of the proof, will be quite delicate and long, being the scope of the Steps~III--VII. Later on, in Step~VIII, we will use this result to cover all the primary sectors and we will have also to take care of the remaining polygon. In this step, we describe a method to partition a given sector in triangles. Let us then start with a technical definition.

\begin{definition}
Let $\DoubleS(\u{AB})$ be a sector, and let $\u{P},\, \u{Q}$ and $\u{R}$ be three points in $\arc{\u{AB}}$ such that the triangle $\u{PQR}$ is not degenerate and is contained in $\Delta$. We say that $\u{PQR}$ is an \emph{admissible triangle} if each of its open sides is entirely contained either in $\partial\Delta$, or in $\Delta\setminus\partial\Delta$. If $\u{PQR}$ is an admissible triangle, we say that $\u{PR}$ is its \emph{exit side} if $\arc{\u{PR}}=\arc{\u{PQ}}\cup\arc{\u{QR}}$.
\end{definition}
\begin{figure}[htbp]
\begin{center}
\input{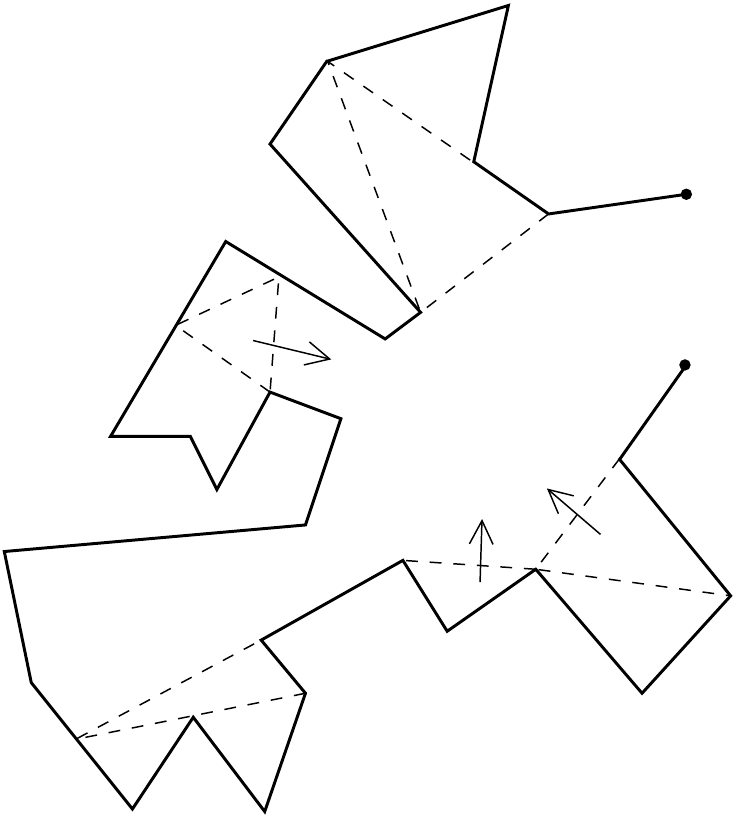_t}\vspace{-10pt}
\caption{Some (admissible or not) triangles in a sector.}\label{Fig:admiss}
\end{center}
\end{figure}
Figure~\ref{Fig:admiss} shows a sector $\DoubleS(\u{AB})$, drawn in black, with five numbered triangles, having dotted sides. Triangles~1 and~3 are not admissible because they contain a side which is neither all contained in $\partial \Delta$, nor all in $\Delta\setminus\partial\Delta$, in particular triangle~1 has a side which is half in $\partial \Delta$ and half in $\Delta\setminus\partial\Delta$, while triangle~3 has a side which is all contained in $\Delta\setminus\partial\Delta$ except for a point. On the other hand, triangles~2, 4 and~5 are admissible, and an arrow indicates the exit side for each of them.

\begin{remark}
It is important to observe that each admissible triangle has exactly one exit side. As the figure shows, an admissible triangle can have all the three sides in the interior of $\Delta$, as triangle~2, or two, as triangle~5, or just one, as triangle~4. In any case, the exit side is always in the interior of $\Delta$.\par
It is also useful to understand the reason for the choice of the name. Consider a point $\u{T}\in \arc{\u{PR}}$, being $\u{PR}$ the exit side of the admissible triangle $\u{PQR}$, and consider the segment $TO$ which connects $T=u^{-1}(\u{T})$ to the center $O$ of the square $\D$. If $v:\D\to\Delta$ is an extension as required by Theorem~\mref{main}, then the image of the segment $TO$ under $v$ must be a path inside $\Delta$ which connects $\u{T}$ to $\u{O}$. This path must clearly exit from the triangle $\u{PQR}$ through the exit side $\u{PR}$.
\end{remark}

Before stating and proving the main result of this step we fix some further notation. Recall that $\Delta$ is a non-intersecting polygon obtained as the image of $\partial\D$ under $u$. Hence, $\partial\D$ is divided in a finite number of segments and $u$ is affine on each of these segments. We will then call \emph{vertex} on $\partial\D$ each extreme point of any of these segments. Therefore, the four corners of $\partial\D$ are of course vertices, but there are usually much more vertices. Correspondingly, we call \emph{vertex} on $\partial\Delta$ the image of each vertex on $\partial\D$. Thus, all the points of $\partial\Delta$ which are ``vertices'' in the usual sense of the polygon (i.e., corners), are clearly also vertices in our notation. However, there may be also other vertices which are not corners, hence which are in the interior of some segment contained in $\partial\Delta$. We will also call \emph{side} in $\partial \D$ or in $\partial\Delta$ any segment connecting two consecutive vertices on $\partial \D$ or on $\partial\Delta$. Hence, some of the segments which are sides of $\partial\Delta$ in the sense of polygons are in fact sides according to our notation, but there might be also some segments contained in $\partial\Delta$ which are not sides, but finite union of sides.\par
Finally, notice that it is admissible to add (finitely many!) new vertices to $\partial\D$ and then correspondingly to $\partial\Delta$. This means that we will possibly decide to consider some particular side as a union of two or more sides, thus increasing the total number of vertices: this is possible since of course $u$ is affine on each of those ``new sides''.
\begin{remark}\label{angle<1}
As an immediate application of this possibility of adding a finite set of new vertices, we will assume without loss of generality that for any two consecutive vertices $P$ and $Q$ in $\D$, one always has $\angle POQ \leq 1/50L$.
\end{remark}
We can finally state and prove the main result of this step.

\begin{lemma}\label{lemmastep3}
Let $\DoubleS(\u{AB})$ be a sector. There exists a partition of $\DoubleS(\u{AB})$ in a finite number of admissible triangles such that:
\begin{enumerate}
\item[{\bf a)}] each vertex in $\arc{\u{AB}}$ is vertex of some triangle of the partition,
\item[{\bf b)}] for each triangle $\u{PQR}$ of the partition, whose exit side is $\u{PR}$, the orthogonal projection of $\u{Q}$ on the straight line through $\u{PR}$ lies in the closed segment $\u{PR}$ (equivalently, the angles $\uangle PRQ$ and $\uangle RPQ$ are at most $\pi/2$).
\end{enumerate}
\end{lemma}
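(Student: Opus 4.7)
The plan is to proceed by induction on the number $m$ of edges of the polygonal path $\arc{\u{AB}}$, writing $\u{A} = \u{W}_0, \u{W}_1, \ldots, \u{W}_m = \u{B}$ for the consecutive arc-vertices. Note that $\DoubleS(\u{AB})$ is a simple polygon whose boundary is the union of the chord $\u{AB}$ and the polygonal arc $\arc{\u{AB}}$. The goal is to triangulate this polygon using only arc-vertices, with the freedom to insert finitely many new vertices on the arc (as permitted by the paragraph preceding Remark~\ref{angle<1}). At each inductive step I will peel off one admissible triangle satisfying condition (b), and recurse on the smaller sub-polygon(s) that remain.

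At each step, the natural candidate is an \emph{ear} of the current polygon: three consecutive arc-vertices $\u{W}_{j-1}, \u{W}_j, \u{W}_{j+1}$ such that the triangle they span lies in the closed sub-polygon. Such an ear always exists, by the classical two-ears theorem for simple polygons. An ear triangle is automatically admissible: its two arc-sides are straight portions of $\arc{\u{AB}} \subseteq \partial\Delta$, while the new chord $\u{W}_{j-1}\u{W}_{j+1}$ is a diagonal with its open interior strictly inside the polygon, hence in $\Delta \setminus \partial\Delta$. Since the three vertices appear in arc-order, the exit side of this triangle is the new chord $\u{W}_{j-1}\u{W}_{j+1}$, and the apex is $\u{W}_j$.

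The main obstacle is enforcing condition (b), namely that in the chosen ear both angles $\uangle W_{j+1}W_{j-1}W_j$ and $\uangle W_{j-1}W_{j+1}W_j$ are at most $\pi/2$. If the ear at hand violates this, I would refine by inserting a new arc-vertex $\u{W}^*$ on an arc edge incident to the problematic vertex, and then select a new ear in the refined polygon. A careful geometric case analysis, based on a continuity argument as $\u{W}^*$ slides along the arc edge, should show that one can always find such $\u{W}^*$ producing an admissible ear which satisfies (b). This refinement step is the most delicate part of the proof: one must verify that the new diagonal stays inside the sub-polygon (so that admissibility is preserved), that the new ear indeed has acute angles at its exit-side endpoints, and that only finitely many refinements suffice at each inductive step --- in particular, that the refinement does not merely shift the obtuse angle to a nearby triangle.

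Once a valid ear has been clipped, the sub-polygon obtained by removing the apex $\u{W}_j$ has strictly fewer vertices and the induction proceeds on it. Termination is then immediate, and property (a) holds automatically: every vertex of $\arc{\u{AB}}$, whether originally present or inserted during the refinement, becomes either the apex of some ear or a vertex of the final remaining triangle, and hence a vertex of some triangle of the partition.
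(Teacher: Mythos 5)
Your strategy (ear--clipping from the arc side, with exit sides being the newly created diagonals) is genuinely different from the paper's, but it leaves the actual content of the lemma unproved: condition~(b) is precisely the hard part, and your proposal reduces it to the unproven assertion that ``a careful geometric case analysis should show'' that inserting a point $\u{W}^*$ on an edge incident to the problematic vertex produces a good ear. In fact the natural version of that fix provably fails. Suppose the ear $\u{W}_{j-1}\u{W}_j\u{W}_{j+1}$ (apex $\u{W}_j$, exit side $\u{W}_{j-1}\u{W}_{j+1}$) has an obtuse angle at $\u{W}_{j+1}$. By Thales, this means $\u{W}_{j+1}$ lies strictly inside the disk having $\u{W}_{j-1}\u{W}_j$ as diameter. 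If you now place $\u{W}^*$ anywhere on the open edge $\u{W}_j\u{W}_{j+1}$ and consider the smaller ear $\u{W}_{j-1}\u{W}_j\u{W}^*$ (apex $\u{W}_j$, exit side $\u{W}_{j-1}\u{W}^*$), then, since $\u{W}_j$ lies on the boundary of that disk and $\u{W}_{j+1}$ in its interior, convexity gives that $\u{W}^*$ is again strictly inside the disk, so the angle of the new ear at $\u{W}^*$ is again larger than $\pi/2$: no choice of $\u{W}^*$ on that edge repairs (b). One is then forced to jump to a different ear, where the same obstruction can reappear, and you give no argument that this process terminates or that only finitely many extra vertices are needed. (A further, more minor, point: the two--ears theorem does not guarantee an ear whose apex is an \emph{interior} arc--vertex; the two guaranteed ears may sit at $\u{A}$ and $\u{B}$, so even the existence of your candidate triangle needs an extra argument or a modified bookkeeping.) The structural reason the ear approach is awkward is that for an ear the exit side is the \emph{new} diagonal, whose direction is dictated by the two neighbouring vertices and over which you have no control, so the orthogonality constraint in (b) cannot be enforced locally.

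The paper circumvents exactly this difficulty by peeling from the opposite side: at each step it cuts off the triangle $\u{ABC}$ whose exit side is the \emph{current chord} $\u{AB}$, choosing the apex $\u{C}$ among the points of $\arc{\u{AB}}$ that $\u{AB}$ ``sees'' by orthogonal projection --- the visible vertex closest to $\u{AB}$ if one exists, and otherwise a newly inserted vertex on the unique visible side (chosen over the midpoint of $\u{AB}$, or as the foot of the perpendicular from $\u{B}$ in the boundary case). With this choice the projection of $\u{C}$ onto the line through $\u{AB}$ lies in the closed segment $\u{AB}$ by construction, so (b) is automatic, and the recursion is run on the two sub--sectors $\DoubleS(\u{AC})$, $\DoubleS(\u{BC})$, with a half--integer ``weight'' (number of sides plus a penalty $1/2$ when no vertex is visible) ensuring that the inductive parameter strictly decreases even when a new vertex has to be added. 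If you want to salvage your ear--based scheme, you would need an analogous mechanism giving you control of the direction of the exit side, which is exactly what is missing.
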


To show this result, it will be convenient to associate to any possible sector a number, which we will call ``weight''.

\begin{definition}
Let $\DoubleS(\u{AB})$ be a sector, and for any point $\u{P}\in\arc{\u{AB}}$ (different from $\u{A}$ and $\u{B}$) let us call $\u{P}_\perp$ the orthogonal projection of $\u{P}$ onto the straight line through $\u{AB}$. We will say that \emph{$\u{AB}$ ``sees'' $\u{P}$} if $\u{P}_\perp$ belongs to the closed segment $\u{AB}$ and the open segment $\u{PP}_\perp$ is entirely contained in the interior of $\Delta$. Let now $\omega$ be the number of sides of the path $\u{\arc{AB}}$. We will say that the \emph{weight of the sector $\DoubleS(\u{AB})$} is $\omega$ if $\u{AB}$ sees at least a \emph{vertex} $\u{P}$ in $\arc{\u{AB}}$. Otherwise, we will say that weight of $\DoubleS(\u{AB})$ is $\omega+\frac 12$.
\end{definition}

\begin{figure}[htbp]
\begin{center}
\input{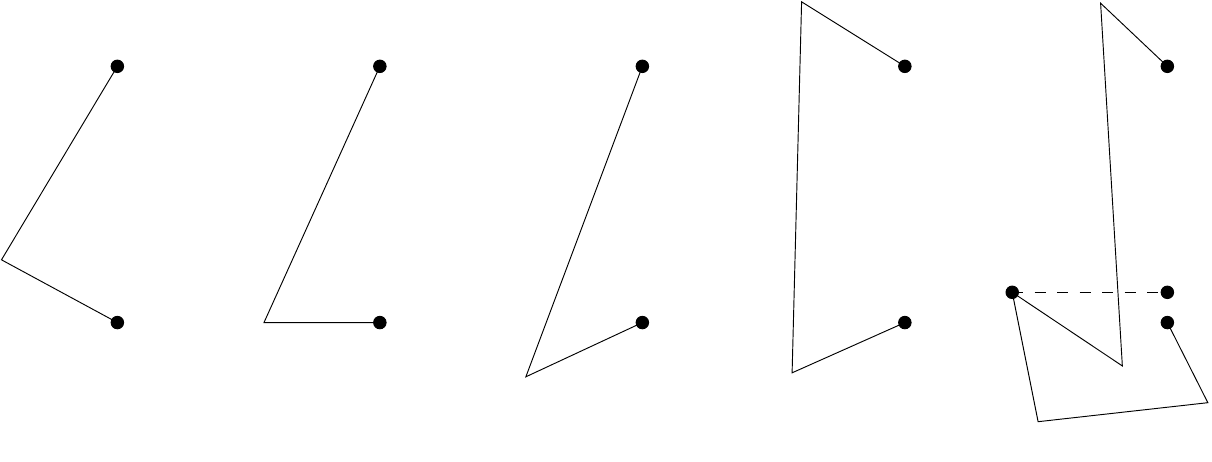_t}\vspace{-10pt}
\caption{Some simple sectors and their weights.}\label{Fig:weight}
\end{center}
\end{figure}

In other words, the weight of any sector is an half-integer corresponding to the number of sides of the sector, augmented of a ``penalty'' $1/2$ in case that the segment $\u{AB}$ does not see any vertex of $\u{\arc{AB}}$. For instance, Figure~\ref{Fig:weight} shows some simple sectors and the corresponding weights. Notice that the last sector has non-integer weight because $\u{AB}$ does \emph{not} see the vertex $\u{V}$, since the segment $\u{VV}_\perp$ does not entirely lie inside $\Delta$. We now show a simple technical lemma, and then to pass to the proof of Lemma~\ref{lemmastep3}.

\begin{lemma}\label{lem:tech}
If the sector $\DoubleS(\u{AB})$ has a non-integer weight, then there exists a side $\u{A}^+\u{B}^-$ in $\arc{\u{AB}}$ such that $\u{AB}$ sees only points of the side $\u{A}^+\u{B}^-$.
\end{lemma}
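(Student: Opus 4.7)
The plan is to place the sector in coordinates so that $\u{AB}$ lies on the $x$-axis and (locally) $\DoubleS(\u{AB})$ lies above it. Under the non-integer weight hypothesis, $\u{AB}$ sees no vertex of $\arc{\u{AB}}$, so I need to show that every seen point lies on a single side. The key reformulation is that, since the sector is bounded by $\u{AB}\cup\arc{\u{AB}}$, the open segment $\u{P}\u{P}_\perp$ fails to lie in the interior of $\Delta$ exactly when it meets $\arc{\u{AB}}$ in its interior. Hence $\u{P}$ is seen iff $\u{P}_\perp$ lies in the closed segment $\u{AB}$ and no other point of $\arc{\u{AB}}$ shares the $x$-coordinate of $\u{P}$ with strictly smaller $y$-coordinate.

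This suggests introducing the lower envelope $y_{\min}(x):=\min\{y:(x,y)\in\arc{\u{AB}}\}$, defined on the closed $x$-interval spanned by $\u{AB}$: by the intermediate value theorem applied to the $x$-projection of the connected arc, this minimum is attained. The seen points of $\arc{\u{AB}}$ are then exactly the points on the graph of $y_{\min}$. Because $\arc{\u{AB}}$ is piecewise linear, $y_{\min}$ is piecewise affine; on each maximal subinterval where the minimum is realized by a fixed side of $\arc{\u{AB}}$ I will call that side the \emph{active side}, and I will call \emph{transitions} the endpoints separating consecutive such subintervals.

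The heart of the proof would be the claim that at every transition $x_0$ the point $\u{P}_0:=(x_0,y_{\min}(x_0))$ is a vertex of $\arc{\u{AB}}$, and is seen by $\u{AB}$. Seen-ness is automatic from the definition of $y_{\min}$. For the vertex claim, suppose $\u{P}_0$ were interior to some side $\sigma$; then $\sigma$ is affine in a neighbourhood of $x_0$ with $\sigma(x_0)=y_{\min}(x_0)$ and $y_{\min}\le\sigma$ there, and letting $\sigma_A\ne\sigma_B$ be the sides active just before and just after $x_0$, one-sided limits of $y_{\min}$ at $x_0$ force $\sigma_A(x_0)=\sigma_B(x_0)=\sigma(x_0)$. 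Thus $\sigma$ meets $\sigma_A$ at the interior point $\u{P}_0$ of $\sigma$, contradicting that two distinct sides of the simple arc $\arc{\u{AB}}$ (simple because $u$ is bi-Lipschitz) can only meet at a shared vertex.

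The lemma then follows at once: if no vertex is seen, the claim forbids transitions, so a single side $\u{A}^+\u{B}^-$ of $\arc{\u{AB}}$ realizes $y_{\min}$ on the entire $x$-interval, and every seen point lies on that side. The main subtlety arises because $\arc{\u{AB}}$ need not be a graph over $\u{AB}$ and may fold in $x$, giving several arc points over the same $x$-coordinate; handling this is what makes the transition classification delicate, but the simplicity of the arc keeps the case analysis of what can happen at a transition (new side starting, old side ending, or two sides crossing) tight enough to rule out any transition that does not identify a seen vertex.
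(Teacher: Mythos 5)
There is a genuine gap, and it sits in your very first reformulation of ``seen''. You identify the set of seen points with the graph of the global lower envelope $y_{\min}(x)=\min\{y:(x,y)\in\arc{\u{AB}}\}$, but this is false in general, because the arc $\arc{\u{AB}}$ can pass \emph{strictly below} the line through $\u{AB}$ at abscissae interior to the chord: the definition of a sector only requires the open segment $\u{AB}$ to lie in the interior of $\Delta$, and a spiral-shaped sector (a configuration this paper explicitly has to cope with elsewhere) wraps around underneath $\u{AB}$. Such a below-the-line point $\u{P}$ is \emph{never} seen: near any point of $\arc{\u{AB}}$ the interior of $\Delta$ lies on the sector side of the arc, while the open segment $\u{PP}_\perp$ approaches $\u{P}$ from the opposite side (it is disjoint from $\u{AB}\cup\arc{\u{AB}}$ and starts just below the chord, i.e.\ outside the sector), so it leaves $\Delta$ near $\u{P}$ --- possibly without ever meeting $\arc{\u{AB}}$, and possibly meeting the complementary arc of $\partial\Delta$ instead. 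Thus already your first equivalence (``$\u{PP}_\perp\not\subseteq\mathrm{int}\,\Delta$ iff it meets $\arc{\u{AB}}$'') is wrong, and so is the ``strictly smaller $y$'' criterion. This breaks both halves of your argument: a transition of $y_{\min}$ may occur at a below-the-line vertex, which is not seen, so the non-integer-weight hypothesis does not forbid it; and a seen point (which is always above the chord) need not lie on the graph of $y_{\min}$, so even with no transitions you cannot conclude that the seen points lie on the single active side --- in a spiral sector the global envelope can be carried by a side lying entirely below the chord while the seen points sit on a different side above it.

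The repair is to take the envelope only of the part of $\arc{\u{AB}}$ on the sector side of the line; equivalently, for each $\u{D}$ in the segment consider the \emph{first} point at which the ray from $\u{D}$, orthogonal to $\u{AB}$ and entering the sector, meets $\partial\Delta$. One checks that this first hit lies on $\arc{\u{AB}}$, that it is seen, and, using the local position of $\Delta$ along the arc recalled above, that conversely every seen point with foot $\u{D}$ is exactly this first hit. This is precisely the object the paper's proof works with. Once the envelope is corrected, your transition analysis is sound and is a welcome fleshing-out of the paper's one-line claim that a change of active side produces a seen vertex: since two distinct sides of the simple arc can meet only at a common endpoint, a continuous transition exhibits a common vertex on the envelope, and a jump of the envelope exhibits an endpoint of the disappearing side, both of which are seen. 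Two small further points to include: over the abscissae of the chord each side lies weakly on one side of the line (it cannot cross the open segment $\u{AB}$), so the restricted envelope is still a lower envelope of finitely many segments and your piecewise-affine machinery applies; and vertical sides must be treated separately, since writing sides as functions $\sigma(x)$ excludes them.
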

\begin{proof}
First, notice that the property that we are going to show appears evident from the last three examples of Figure~\ref{Fig:weight}.\par
Let us now pass to the proof. For any point $\u{D}\in \u{AB}$, there exists exactly a point $\u{C}\in\arc{\u{AB}}$ such that $\u{AB}$ sees $\u{C}$ and $\u{C}_\perp = \u{D}$. This point is simply obtained by taking the half-line orthogonal to $\u{AB}$, starting from $\u{D}$ and going inside the sector: $\u{C}$ is the first point of this half-line which belongs to $\partial\Delta$, and in particular it belongs to $\arc{\u{AB}}$ by construction.\par
The proof is then concluded once we show that all such points $\u{C}$'s are on a same side of $\arc{\u{AB}}$. Indeed, if it were not so, there would clearly be some such $\u{C}$ which is a vertex, contradicting the fact that the sector has non-integer weight.
\end{proof}

\proofof{Lemma~\ref{lemmastep3}}
We will show the result by induction on the (half-integer) weight of the sector.\par
If $\DoubleS(\u{AB})$ has weight $2$, which is the least possible weight, then the two sides of the sector must be $\u{AC}$ and $\u{CB}$ for a vertex $\u{C}$. Moreover, $\u{AB}$ sees $\u{C}$, because otherwise the weight would be $2.5$. Hence, the sector coincides with the triangle $\u{ABC}$, which is a (trivial) partition as required.\par
Let us now consider a sector of weight $\omega>2$, and assume by induction that we already know the validity of our claim for all the sectors of weight less than $\omega$. In the proof, we distinguish three cases.
\case{1}{$\omega\in\N$.}
In this case, there are by definition some vertices which are seen by $\u{AB}$. Among these vertices, let us call $\u{C}$ the one which is closest to the segment $\u{AB}$. Let us momentarily assume that neither $\u{AC}$ nor $\u{BC}$ is entirely contained in $\partial \Delta$. Then, by the minimality property of $\u{C}$, the open segments $\u{AC}$ and $\u{BC}$ lie entirely in the interior of $\Delta$, as depicted in Figure~\ref{Fig:division}~(left). Hence, one can consider the sectors $\DoubleS(\u{AC})$ and $\DoubleS(\u{BC})$, as ensured by Remark~\ref{sottosettori}. Moreover, of course the weights of both $\DoubleS(\u{AC})$ and $\DoubleS(\u{BC})$ are strictly less than $\omega$, so by inductive assumption we know that it is possible to find a suitable partition in triangles for both the sectors $\DoubleS(\u{AC})$ and $\DoubleS(\u{BC})$. Finally, since by construction the sectors $\DoubleS(\u{AC})$ and $\DoubleS(\u{BC})$ are essentially disjoint, and the union of them with the triangle $\u{ABC}$ is the whole sector $\DoubleS(\u{AB})$, putting together the two decompositions and the triangle $\u{ABC}$ we get the desired partition of $\DoubleS(\u{AB})$.\par
Let us now consider the possibility that $\u{AC}\subseteq \partial\Delta$ (if, instead, $\u{BC}\subseteq\partial\Delta$, then the completely symmetric argument clearly works). If it is so, we can anyway repeat almost exactly the same argument as before. In fact, $\u{BC}$ is entirely contained in the interior of $\Delta$, again by the minimality property of $\u{C}$ and by the fact that $\omega>2$. Moreover, the sector $\DoubleS(\u{BC})$ has weight strictly less than $\omega$, so by induction we can find a good partition of $\DoubleS(\u{BC})$, and adding the triangle $\u{ABC}$ we get the desired partition of $\DoubleS(\u{AB})$.
\begin{figure}[htbp]
\begin{center}
\input{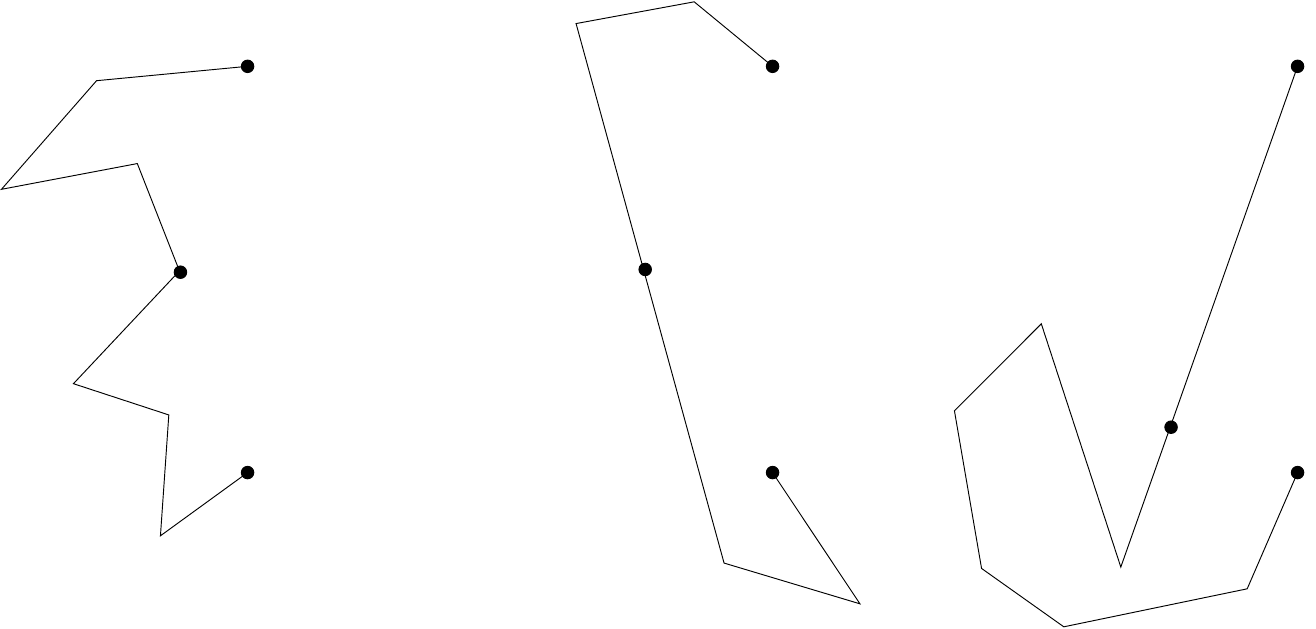_t}\vspace{-10pt}
\caption{The three possible cases in Lemma~\ref{lemmastep3}.}\label{Fig:division}
\end{center}
\end{figure}
\case{2}{$\omega\not\in\N$, $\u{A}^+\not\equiv \u{A}$, $\u{B}^-\not\equiv\u{B}$.}
In this case, we can use the same idea of Case~1 with a slight modification. In fact, define $\u{C}\in \u{A}^+\u{B}^-$ the point such that $\u{C}_\perp$ is the middle point of the segment $\u{AB}$ (this point is well-defined as shown in the proof of Lemma~\ref{lem:tech}). Again, by definition and by Lemma~\ref{lem:tech} we have that the open segments $\u{AC}$ and $\u{BC}$ are in the interior of $\Delta$, see Figure~\ref{Fig:division}~(center).\par
Let us then \emph{decide} that the point $\u{C}$ is a new vertex of $\partial\Delta$. This means that from now on we consider the point $\u{C}$ as a vertex, and consequently we stop considering $\u{A}^+\u{B}^-$ as a side of $\partial\Delta$, instead, we think of it as the union of the two sides $\u{A}^+\u{C}$ and $\u{CB}^-$. However, notice carefully that this choice modifies the weight of $\DoubleS(\u{AB})$! In fact, the number of sides of $\DoubleS(\u{AB})$ is increased by $1$, and since $\u{AB}$ sees $\u{C}$ by construction, then the new weight of $\DoubleS(\u{AB})$ is $\omega+\frac 12$.\par
We can now argue as in Case~1. In fact, again the sector $\DoubleS(\u{AB})$ is the union of the triangle $\u{ABC}$ with the two sectors $\DoubleS(\u{AC})$ and $\DoubleS(\u{BC})$, so it is enough to put together the triangle $\u{ABC}$ and the two partitions given by the inductive assumption applied on the sectors $\DoubleS(\u{AC})$ and $\DoubleS(\u{BC})$. To do so, we have of course to be sure that the weight of both sectors is strictly less than the {original} weight of $\DoubleS(\u{AB})$, that is, $\omega$ (and not $\omega+\frac 12$!). This is clear by the assumption that $\u{A}^+\not\equiv \u{A}$ and $\u{B}^-\not\equiv\u{B}$, since the side $\u{A}^+\u{B}^-$ is neither the first nor the last of the path $\arc{\u{AB}}$, thus the weight of both sectors is at most $\omega-1$.
\case{3}{$\omega\not\in\N$ and $\u{A}^+\equiv \u{A}$ or $\u{B}^-\equiv\u{B}$.}
 By symmetry, let us assume that $\u{A}^+\equiv \u{A}$. In this case, we cannot argue exactly as in Case~2, because if we did so the sector $\DoubleS(\u{BC})$ might have weight either $\omega$ or $\omega-\frac 12$, and in the first case we could not use the inductive hypothesis.\par
Anyway, it is enough to make a slight variation of the argument of Case~2. Define $\u{C}$, as in Figure~\ref{Fig:division}~(right), the point of $\u{AB}^-$ such that $\u{BC}$ is orthogonal to $\u{AB}^-$, so that clearly the open segment $\u{BC}$ lies in the interior of $\Delta$. Let us now \emph{decide}, exactly as in Case~2, that the point $\u{C}$ is from now on an extreme, thus changing the weight of $\DoubleS(\u{AB})$ from $\omega$ to $\omega+\frac 12$.\par
By construction, the segment $\u{AB}$ sees the point $\u{C}$, and the sector $\DoubleS(\u{AB})$ is the union of the sector $\DoubleS(\u{BC})$ and of the triangle $\u{ABC}$. Hence, we conclude exactly as in the other cases if we can use the inductive assumption on the sector $\DoubleS(\u{BC})$. Notice that the number of sides of $\DoubleS(\u{BC})$ equals exactly the original number of sides of $\DoubleS(\u{AB})$, that is, $\omega-\frac 12$. Hence, in principle, the weight of $\DoubleS(\u{BC})$ could be either $\omega-\frac 12$ or $\omega$, as observed before. But in fact, by our definition of $\u{C}$, we have that the segment $\u{BC}$ sees the vertex $\u{B}^-$, so that the actual weight of $\DoubleS(\u{BC})$ is $\omega-\frac 12$, hence strictly less than $\omega$, and then we can use the inductive assumption.
\end{proof}

To give some examples, let us briefly consider the three cases drawn in Figure~\ref{Fig:division}. In the left case, the weight of $\DoubleS(\u{AB})$ was $\omega=8$, and the weights of the sectors $\DoubleS(\u{AC})$ and $\DoubleS(\u{BC})$ are both $4$. In the central case, the weight of $\DoubleS(\u{AB})$ was $\omega=5.5$, then it becomes $6$ because we add the new vertex $\u{C}$, and the weights of the sectors $\DoubleS(\u{AC})$ and $\DoubleS(\u{BC})$ are respectively $3$ and $3.5$. Finally, in the right case, the weight of $\DoubleS(\u{AB})$ was $\omega=7.5$, it becomes $8$ as we add $\u{C}$, and the weight of the sector $\DoubleS(\u{BC})$ is $7$.\par

An explicit example of a sector with a partition in triangles done according with the construction of Lemma~\ref{lemmastep3} can be seen in Figure~\ref{Fig:divsect}.\par

We conclude this step by setting a natural partial order on the triangles of the partition given by Lemma~\ref{lemmastep3} and by adding some remarks and a last definition.
\begin{definition}\label{defparord}
Let $\DoubleS(\u{AB})$ be a sector, and consider a partition satifying the properties of Lemma~\ref{lemmastep3}. We define a partial order $\leq$ between the triangles of the partition as the partial order induced by letting $\u{PQR} \leq \u{STU}$ if the exit side of $\u{PQR}$ is one of the sides of $\u{STU}$. Equivalently, let $\u{PQR}$ and $\u{STU}$ be two triangles of the partition, being $\u{SU}$ the exit side of the latter. One has $\u{PQR}\leq \u{STU}$ if and only if the points $\u{P},\, \u{Q}$ and $\u{R}$ belong to the path $\arc{\u{SU}}$.
\end{definition}

\begin{figure}[htbp]
\begin{center}
\input{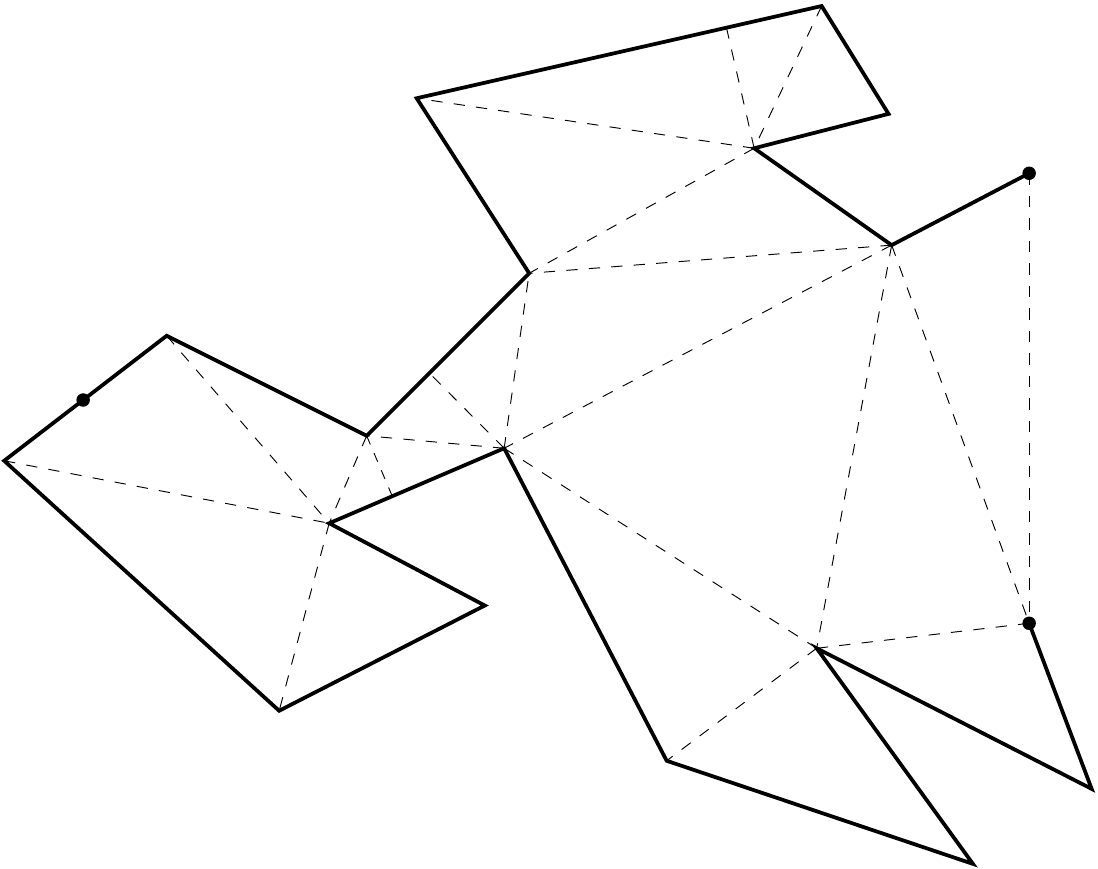_t}\vspace{-10pt}
\caption{Partition of a sector in triangles, and natural sequence of triangles related to some $\u{P}$.}\label{Fig:divsect}
\end{center}
\end{figure}

\begin{remark}\label{rem:induction}
Notice that the relation defined above admits as greatest element the unique triangle having $\u{AB}$ as its exit side. Moreover, each triangle $\T$ except the maximizer has a unique successor.\par
We remark also that, since the triangles are a finite number, in all the future constructions we will always be allowed to consider a single triangle of the partition and to assume that the costruction has been done in all the triangles which are smaller in the sense of the order.
\end{remark}

\begin{definition}\label{def: nat seq}
Let $\DoubleS(\u{AB})$ be a sector subdivided in triangles according to Lemma~\ref{lemmastep3}, and consider a point $\u{P}\in\u{\arc{AB}}$. We will call \emph{natural sequence of triangles related to $\u{P}$} the sequence $\big(\T_1,\, \T_2,\, \dots\,,\, \T_N\big)$ of triangles of the partition satisfying the following requirements,
\begin{itemize}
\item $\T_1$ is the minimal triangle containing $\u{P}$ (minimality is intended with respect to $\leq$),
\item $\T_N$ is the triangle having $\u{AB}$ as its exit side,
\item $\T_{i+1}$ is the successor of $\T_i$ for all $1\leq i\leq N-1$.
\end{itemize}
\end{definition}
It is immediate, thanks to the above remarks, to observe that this sequence is univoquely determined.
Figure~\ref{Fig:divsect} shows a sector subdivided in triangles and a point $\u{P}$ with the related natural sequence of triangles $\big(\T_1,\, \dots \,,\, \T_{10}\big)$, with the arrows on the exit sides.

\bigstep{IV}{Definition of the paths inside a sector}

In this step we define non-intersecting piecewise affine paths starting from any point $\u{P}\in \arc{\u{AB}}$ and ending on $\u{AB}$, where $\DoubleS(\u{AB})$ is a given sector. This is the most important and delicate point of our construction. The goal of this step is to provide the ``first part'' of the piecewise affine path from a vertex $\u{P}$ to the center $\u{O}$ which will eventually be the image of $PO$ under $v$; namely, the part which is inside the primary sector $\DoubleS(\u{A}_i\u{A}_{i+1})$ to which $\u{P}$ belongs. Of course, to obtain the bi-Lipschitz property for the function $v$, we have to take care that all the paths starting from different points $\u{P}\neq \u{Q}$ do not become neither too far nor too close to each other. We can now give a simple definition and then state and prove the result of this step.

\begin{definition}
Let $\DoubleS(\u{AB})$ be a sector, and let $\u{P}\in \arc{\u{AB}}$. Let moreover $\big( \T_1,\, \T_2,\, \dots\,,\, \T_N\big)$ be the natural sequence of triangles related to $\u{P}$, according to Definition~\ref{def: nat seq}. We will call \emph{good path corresponding to $\u{P}$} any piecewise affine path $\u{PP}_1\u{P}_2\cdots\u{P}_N$ such that each $\u{P}_i$ belongs to the exit side of the triangle $\T_i$ (then $\u{P}_N\in \u{AB}$). Notice that $N$ depends on $\u{P}$.
\end{definition}

Figure~\ref{Fig:goodpath} shows a sector $\DoubleS(\u{AB})$ subdivided in triangles as in Lemma~\ref{lemmastep3} and shows two good paths corresponding to the points $\u{P}$ and $\u{Q}$.

\begin{figure}[htbp]
\begin{center}
\input{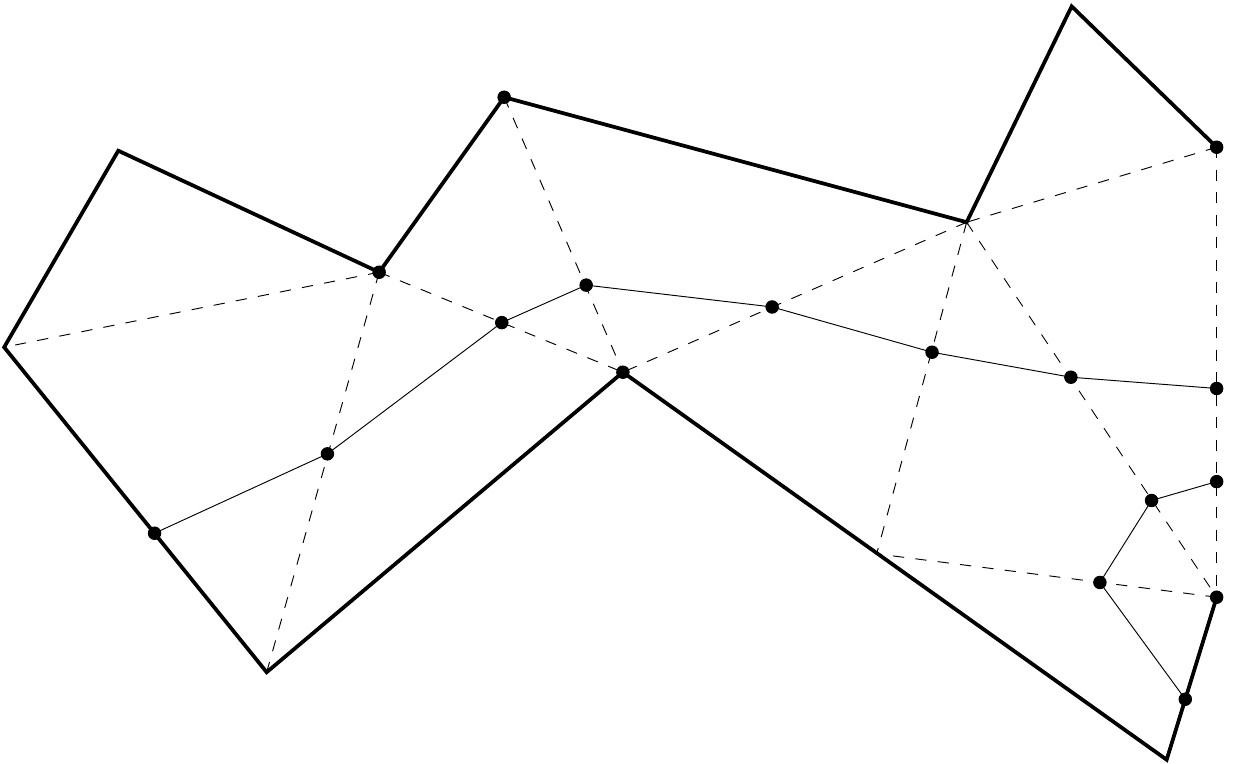_t}\vspace{-10pt}
\caption{A sector with two good paths corresponding to $\u{P}$ and $\u{Q}$.}\label{Fig:goodpath}
\end{center}
\end{figure}

\begin{lemma}\label{lemmastep4}
Let $\DoubleS(\u{AB})$ be a sector. Then there exist good paths $\u{PP}_1\u{P}_2\cdots \u{P}_N$ corresponding to each vertex $\u{P}$ of $\arc{\u{AB}}$, with $N=N(\u{P})$, satisfying the following properties:
\begin{enumerate}
\item[(i)] for any $\u{P}$ and for any $1\leq i\leq N(\u{P})$, the segment $\u{P}_{i-1}\u{P}_i$ makes an angle of at least $\arcsin\big( \frac 1{6L^2}\big)$ with the side of $\T_i$ to which $\u{P}_{i-1}$ belongs, and an angle of at least $\pi/12=15^\circ$ with the exit side of $\T_i$;
\item[(ii)] for any $\u{P}$, $\ell\big( \arc{\u{PP}_N}\big)= \ell(\u{PP}_1)+ \ell(\u{P}_1\u{P}_2)+ \cdots + \ell(\u{P}_{N-1}\u{P}_N)\leq 4\, \lcurve{AB}$\,;
\item[(iii)] for any $\u{P},\, \u{Q}$, if for some $1\leq i \leq N(\u{P})$ and $1\leq j \leq N(\u{Q})$ one has that $\u{P}_i$ and $\u{Q}_j$ belong to the same exit side of some triangle, then
\[
\frac{\ell\big(\arc{PQ}\big)}{7L} \leq \ell\big( \u{P}_i\u{Q}_j\big) \leq \lcurve{PQ}\,,
\]
and moreover, if $i<N(\u{P})$ then 
\[
\ell\big( \u{P}_{i+1}\u{Q}_{j+1}\big) \leq \ell\big( \u{P}_i\u{Q}_j\big)\,;
\]
\item[(iv)] the piecewise affine paths $\u{PP}_1\u{P}_2\cdots \u{P}_N$ are pairwise disjoint\,.
\end{enumerate}
\end{lemma}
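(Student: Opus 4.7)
The plan is to proceed by induction on the partial order of triangles in the partition of $\DoubleS(\u{AB})$ introduced in Definition~\ref{defparord}. By Remark~\ref{rem:induction}, we may process the triangles from smallest to largest: when we reach a triangle $\T$, every ``incoming'' point on its two non-exit sides has already been fixed, either as a vertex on $\partial\Delta$ (the starting point of a new path) or as a point $\u{P}_i$ placed earlier on the exit side of a strictly smaller triangle. Thus the entire construction reduces to a single ``local'' placement step inside each triangle.

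The heart of the argument is then the local step inside a triangle $\T=\u{STU}$ with exit side $\u{SU}$: given the ordered set of incoming points on $\u{S}\u{T}\cup\u{T}\u{U}$, one must place the corresponding next points on $\u{S}\u{U}$ so as to ensure the angle condition~(i), the monotonicity part of~(iii), and non-crossing~(iv). The geometric device that makes this possible is the orthogonal projection $\u{M}$ of $\u{T}$ onto the line through $\u{SU}$: by Lemma~\ref{lemmastep3}(b) this point lies strictly inside the segment $\u{SU}$, so both $\uangle{T}{S}{U}$ and $\uangle{S}{U}{T}$ are at most $\pi/2$. I would set up a piecewise linear correspondence $\u{S}\u{T}\cup\u{T}\u{U}\to\u{S}\u{U}$ sending $\u{S}\mapsto\u{S}$, $\u{T}\mapsto\u{M}$, $\u{U}\mapsto\u{U}$, routing points on $\u{ST}$ into $\u{SM}$ and points on $\u{TU}$ into $\u{MU}$, with the linear parameters chosen so that each new segment $\u{P}_{i-1}\u{P}_i$ stays close to the altitude direction and hence meets $\u{SU}$ at an angle not smaller than $\pi/12$. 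In most configurations the geometry forces this automatically; only in thin triangles is it necessary to slightly tilt the correspondence, and this is precisely where the weaker bound $\arcsin(1/6L^2)$ for the angle at the incoming side is paid for, making use of the bi-Lipschitz property of $u$.

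Once~(i), (iv), and the monotonicity in~(iii) are in place at each step, the global estimates follow. For~(ii), each $\ell(\u{P}_{i-1}\u{P}_i)$ is comparable, via the $\pi/12$ angle at the exit side, to the length of the portion of $\arc{\u{AB}}$ cut off by the corresponding exit side; telescoping the contributions along the natural sequence of Definition~\ref{def: nat seq} and applying~(\ref{stimaL^2}) once yields a constant bounded by $4$. For~(iii), the upper bound $\lcurve{PQ}$ follows by iterating the local monotonicity of distances between adjacent paths triangle by triangle, while the lower bound $\ell(\arc{PQ})/(7L)$ combines~(\ref{stimaL^2}) with the angle control~(i) to recover a uniform transverse separation; the factor $7L$ is just enough absorption for the worst case of thin triangles.

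The main obstacle, and the feature that dictates the shape of the constants, is the simultaneous control of the two angles in~(i). A large angle at the exit side wants the segment nearly orthogonal to $\u{SU}$, whereas a large angle at the incoming side clashes with this whenever the incoming side is itself almost parallel to $\u{SU}$ -- the degenerate ``long thin triangle'' situation still allowed by Lemma~\ref{lemmastep3}(b). The essential technical point is that such thin triangles cannot stack arbitrarily along a natural sequence without the bi-Lipschitz property of $u$ forcing compensating behaviour, and tracking this carefully through the induction is what produces the $L^2$ factor in $\arcsin(1/6L^2)$.
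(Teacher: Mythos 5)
Your overall architecture (induction along the triangle order, a local placement step on each exit side, then global estimates) matches the paper's strategy, but the proposal misses the one idea the paper's proof actually turns on, and without it the key estimate fails. Your local step is a (possibly slightly tilted) piecewise affine correspondence from $\u{ST}\cup\u{TU}$ onto $\u{SU}$. Whenever the exit side is shorter than the incoming side this correspondence contracts distances between neighbouring paths by some factor, and these factors \emph{compound multiplicatively} along the natural sequence of triangles, whose length is dictated by the polygon and is not controlled by $L$. Consequently the lower bound $\ell\big(\arc{PQ}\big)/(7L)\leq \ell\big(\u{P}_i\u{Q}_j\big)$ in~(iii) cannot be uniform in the number of triangles: after $k$ shrinking steps you only get a constant of order $\lambda^k/L$, not $1/(7L)$. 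The paper flags exactly this trap (in Part~4 of its proof it notes that proportional shrinking would already degrade $1/(7L)$ to $\sqrt2/(14L)$ after one step, ``so the induction would not work'') and resolves it with two devices that your sketch has no analogue of: (a) the image of the apex $\u{C}$ is not the foot of the altitude but the arc-length--proportional point $\widetilde{\u{C}}_1$ \emph{clamped} to the interval $[\u{C}^-,\u{C}^+]$ determined by $\ell(\u{AC}^-)=\lsegm{AC}$, $\ell(\u{BC}^+)=\lsegm{BC}$, so that no shrinking at all occurs when the exit side is long enough, and so that $\ell\big(\arc{AC}\big)\leq\sqrt2\,L\,\ell\big(\u{AC}_1\big)$ holds; and (b) in the genuinely shrinking case, only those consecutive-vertex gaps whose current ratio $\varrho_i$ to the corresponding arc exceeds $1/(3L)$ are shrunk, by a \emph{uniform} factor $\lambda>3/7$, while gaps already below the threshold are left untouched. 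It is precisely this selective shrinking that keeps the constant $7L$ stable under the induction; your ``the factor $7L$ is just enough absorption for the worst case of thin triangles'' is the assertion that needs proof, and with a plain per-triangle rescaling it is false.

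Two further consequences of the same gap. First, in the paper the angle bound $\arcsin\big(\tfrac1{6L^2}\big)$ at the incoming side is not an independent local fact: it is derived (via $\lsegm{BD'}\geq\lsegm{BD}/(\sqrt2 L^2)$) from the very lower bound in~(iii) that your construction does not secure, so the angle part of~(i) inherits the problem. Second, choosing the image of the apex as the orthogonal projection $\u{M}$ is itself risky: when an angle of $\T$ at the exit side is close to $\pi/2$ (allowed by Lemma~\ref{lemmastep3}(b)), $\lsegm{AM}$ can be much smaller than what~(iii) requires for pairs involving points near that endpoint, which is exactly why the paper clamps to $\u{C}^\pm$ rather than projecting. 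The telescoping argument you outline for~(ii) is close in spirit to the paper's (each step's new segment is charged, with factor $4$, to the sub-arc $\arc{\u{AC}}$ disjoint from the one used inductively), so that part is recoverable once the placement rule is fixed.
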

For the sake of clarity, let us briefly discuss the meaning of the requirements of Lemma~\ref{lemmastep4}, having in mind the example of Figure~\ref{Fig:goodpath}. Condition~(i), considered for the point $\u{P}$ and with $i=3$ (so that $\T_i = \u{CDE}$) means that
\begin{align*}
\sin\Big(\angle{\u{P}_3}{\u{P}_2}{\u{D}}\Big) \geq \frac{1}{6L^2}\,, &&
\sin\Big(\angle{\u{P}_3}{\u{P}_2}{\u{E}}\Big) \geq \frac{1}{6L^2}\,, &&
\angle{\u{P}_2}{\u{P}_3}{\u{C}} \geq \frac{\pi}{12}\,, &&
\angle{\u{P}_2}{\u{P}_3}{\u{E}} \geq \frac{\pi}{12}\,.
\end{align*}
Condition~(ii) just means that $\ell\big(\arc{\u{PP}_7}\big) \leq 4 \,\lcurve{AB}$, where $\arc{\u{PP}_7}$ denotes the piecewise affine path $\u{PP}_1\u{P}_2 \cdots \u{P}_7$. Similarly, $\ell\big(\arc{\u{QQ}_3}\big) \leq 4 \,\lcurve{AB}$.\par
Condition~(iii) ensures that
\[
\frac{\ell\big(\arc{PQ}\big)}{7L} \leq \ell\big( \u{P}_7\u{Q}_3\big)\leq \ell\big( \u{P}_6\u{Q}_2\big) \leq \lcurve{PQ}\,.
\]
In particular, concerning the second half of~(iii), notice that by construction if $\u{P}_i$ and $\u{Q}_j$ belong to the same exit side of a triangle, then also the points $\u{P}_{i+1}$ and $\u{Q}_{j+1}$ belong to the same exit side of a triangle and so on. Hence, the second half of~(iii) is saying that the function $l \mapsto \ell\big(\u{P}_{i+l}\u{Q}_{j+l}\big)$ is a decreasing function of $l$ for $0\leq l \leq N(\u{P})-i= N(\u{Q})-j$.\par
Finally, condition~(iv) illustrates the whole idea of the construction of this step, that is, the piecewise affine paths starting from the curve $\arc{\u{AB}}$ and arriving to the segment $\u{AB}$ do not intersect to each other, as in Figure~\ref{Fig:goodpath}.

\proofof{Lemma~\ref{lemmastep4}}
We will show the thesis arguing by induction on the weight of the structure $\DoubleS(\u{AB})$, as in Lemma~\ref{lemmastep3}. In fact, instead of proving that the thesis is true for structures of weight $2$ (recall that this is the minimal possible weight) and then giving an inductive argument, we will prove everything at once. In other words, we take a structure $\DoubleS(\u{AB})$ and we assume that \emph{either} $\DoubleS(\u{AB})$ has weight $2$, \emph{or} the result has been already shown for all the structures of weight less than the weight of $\DoubleS(\u{AB})$.\par

Let us call $\u{C}\in \arc{\u{AB}}$ the point such that $\u{ABC}$ is the greatest triangle of the partition of $\DoubleS(\u{AB})$ with the order of Definition~\ref{defparord}.\par

Consider now the segment $\u{BC}$, which lies entirely either in the interior of $\Delta$ or on $\partial\Delta$. In the first case, $\DoubleS(\u{BC})$ is a sector of weight strictly less than that of $\DoubleS(\u{AB})$. Then, by inductive assumption, there are piecewise affine paths $\u{PP}_1\cdots\u{P}_{N-1}$ for each vertex $\u{P}\in\arc{\u{BC}}$, with $\u{P}_{N-1}\in \u{BC}$, satisfying conditions~(i)--(iv) with $\DoubleS(\u{BC})$ in place of $\DoubleS(\u{AB})$. We have then to connect the points $\u{P}_{N-1}$ on $\u{BC}$ with the segment $\u{AB}$. In the second case, i.e. if $\u{BC}\subseteq \partial\Delta$, then $\arc{\u{BC}}=\u{BC}$, thus we have to connect all the vertices contained in $\u{BC}$ (which are not necessarily only $\u{B}$ and $\u{C}$!) with the segment $\u{A}\u{B}$. The same considerations hold for $\u{AC}$ in place of $\u{BC}$.\par

The construction of the segments between $\u{AC}\cup\u{BC}$ and $\u{AB}$ will be divided, for clarity, in several parts.
~
\part{1}{Definition of $\u{C}_1$.}
By definition, $\u{C}$ is a vertex of $\partial\Delta$. Hence, the first thing to do is to define the good path corresponding to $\u{C}$, that is a suitable segment $\u{CC}_1$ with $\u{C}_1\in \u{AB}$. Let us first define two points $\u{C}^+$ and $\u{C}^-$, on the straight line containing $\u{AB}$, as in Figure~\ref{Fig:step4}. These two points are defined by
\begin{align*}
\ell(\u{BC}^+)=\ell(\u{BC}) \,, && \ell(\u{AC}^-)=\ell(\u{AC})\,.
\end{align*}
In the figure, $\u{C}^\pm$ both belong to the segment $\u{AB}$, but of course it may even happen that $\u{C}^+$ stays above $\u{A}$, and/or that $\u{C}^-$ stays below $\u{B}$. Let us now give a temptative definition of $\u{C}_1$ by letting $\widetilde{\u{C}}_1$ be the point of $\u{AB}$ such that
\begin{equation}\label{defwtc1}
\frac{\ell\big(\arc{AC}\big)}{\ell\big(\arc{AB}\big)} = \frac{\ell(\u{A}\widetilde{\u{C}}_1)}{\lsegm{AB}}\,.
\end{equation}
Taking $\u{C}_1 = \widetilde{\u{C}}_1$ would be a good choice from many points of view, but unfortunately one would eventually obtain estimates weaker than~(i)--(iv).\par
Instead, we give the following definition: we let $\u{C}_1$ be the point of the segment $\u{C}^-\u{C}^+$ which is closest to $\widetilde{\u{C}}_1$. In other words, we can say that we set $\u{C}_1 = \widetilde{\u{C}}_1$ if $\widetilde{\u{C}}_1$ belongs to $\u{C}^+\u{C}^-$, while otherwise we set $\u{C}_1=\u{C}^+$ (resp. $\u{C}_1=\u{C}^-$) if $\widetilde{\u{C}}_1$ is above $\u{C}^+$ (resp. below $\u{C}^-$).\par
\begin{figure}[htbp]
\begin{center}
\input{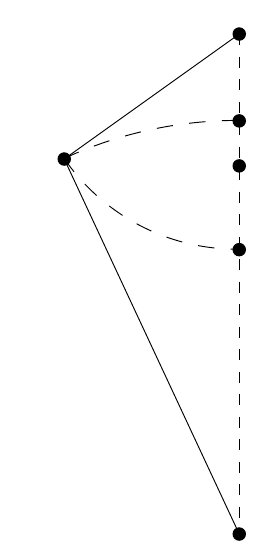_t}\vspace{-10pt}
\caption{The triangle $\u{ABC}$ with the points $\u{C}^+,\, \u{C}^-$ and $\u{C}_1$.}\label{Fig:step4}
\end{center}
\end{figure}
Notice that $\u{C}_1$ belongs to $\u{AB}$, since so does $\widetilde{\u{C}}_1$ thanks to~(\ref{defwtc1}). It is also important to underline that
\begin{align}\label{stimeAC}
\ell\big(\arc{AC}\big) \leq \sqrt{2} L\, \ell\big(\u{AC}_1\big)\,, &&
\ell\big(\arc{BC}\big) \leq \sqrt{2} L\, \ell\big(\u{BC}_1\big)\,.
\end{align}
By symmetry, let us only show the first inequality. Recall that by~(\ref{stimaL^2}) we know
\begin{align*}
\ell\big(\arc{AC}\big) \leq \sqrt{2} L\,\lsegm{AC} \,, &&
\ell\big(\arc{AB}\big) \leq \sqrt{2} L\,\lsegm{AB}\,.
\end{align*}
As a consequence, either $\u{C}_1=\u{C}^-$, and then
\[
\ell\big( \u{AC}_1\big) = \ell\big(\u{AC}^- \big) = \lsegm{AC} \geq \frac{\ell\big(\arc{AC}\big)}{\sqrt{2} L}\,,
\]
or $\ell\big(\u{AC}_1\big) \geq \ell\big(\u{A}\widetilde{\u{C}}_1\big)$, and then by~(\ref{defwtc1})
\[
\ell\big(\u{AC}_1\big) \geq \ell\big(\u{A}\widetilde{\u{C}}_1\big) =
\ell\big(\arc{AC}\big)\, \frac{\lsegm{AB}}{\ell\big(\arc{AB}\big)} 
\geq \frac{\ell\big(\arc{AC}\big)}{\sqrt{2} L}\,.
\]
Recall now that, to show the thesis, all we have to do is to take each vertex $\u{D}\in \u{AC}\cup\u{BC}$ and to find a suitable corresponding point $\u{D'}\in\u{AB}$, in such a way that the requirements~(i)--(v) are satisfied. Having defined $\u{C}_1$, we have then to send the points $\u{P}_N$ of $\u{AC}$ in $\u{AC}_1$ and those of $\u{BC}$ in $\u{BC}_1$.\par
We claim that the two segments can be considered independently, that is, we can limit ourselves to describe how to send $\u{BC}$ on $\u{BC}_1$ and check that the properties~(i)--(iv) hold for points of $\arc{\u{BC}}$. Indeed, if we do so, by symmetry the same definitions can be repeated for $\u{AC}$, and the properties~(i)--(iv) hold separately for points of $\arc{\u{BC}}$ and $\arc{\u{AC}}$. The only thing which would be missing, then, would be to check the validity of~(iii) for two points $\u{P}\in\arc{\u{AC}}$ and $\u{Q}\in\arc{\u{BC}}$. Moreover, this will be trivially true, because since $\u{C}$ belongs to both the segments $\u{AC}$ and $\u{BC}$, then it is enough to use~(iii) once with $\u{P}$ and $\u{C}$, and once with $\u{C}$ and $\u{Q}$, recalling that clearly
\begin{align*}
\ell\big(\arc{PQ}\big)=\ell\big(\arc{PC}\big)+\ell\big(\arc{CQ}\big) \,, &&  \ell\big( \u{P}_i\u{Q}_j\big) =\ell\big( \u{P}_i\u{C}_1\big) + \ell\big( \u{C}_1\u{Q}_j\big)\,.
\end{align*}
For this reason, from now on we will concentrate ourselves only on the segment $\u{BC}$. We will call $\u{D}$ the generic point of $\u{BC}$, which clearly corresponds to $\u{P}_{N-1}$ for some $\u{P}\in\arc{\u{BC}}$, as discussed at the beginning of the proof.
\part{2}{Construction for the case $\u{C}_1 = \u{C}^+$.}
In this case, for any $\u{D}\in \u{BC}$ we set its image as the point $\u{D'}\in \u{BC}_1$ for which $\ell(\u{BD})=\ell(\u{BD'})$. Then in particular all the segments $\u{DD'}$ are parallel to $\u{CC}_1$. Let us now check the validity of~(i)--(iii), since~(iv) is trivially true.\par\smallskip

We start with~(i). Given $\u{D}\in\u{BC}$, and $\u{D'}$ its image, call $\beta = \uangle ABC\in (0,\pi/2]$. Then one has
\begin{align*}
\uangle D{D'}B = \uangle {D'}DB = \frac{\pi - \beta}{2} \,, &&
\uangle D{D'}A = \uangle {D'}DC = \frac{\pi + \beta}{2}\,,
\end{align*}
thus~(i) holds true.\par
Let us now consider~(ii). Given a point $\u{D}\in\u{BC}$, by construction one has
\begin{equation}\label{byconstr}
\lsegm{DD'}\leq \ell\big(\u{CC}_1\big)\leq \lsegm{AC}\leq \lcurve{AC}\,.
\end{equation}
We can then consider separately two cases. If $\u{BC}\subseteq\partial\Delta$, then one simply has $\u{P}\equiv \u{D}$ and $\u{P}_N\equiv \u{P}_1\equiv \u{D'}$, so clearly
\[
\ell\big( \arc{\u{PP}_N} \big) = \lsegm{DD'} \leq \lcurve{AC} \leq \lcurve{AB}\,.
\]
On the other hand, if the open segment $\u{BC}$ lies in the interior of $\Delta$, then one has
\begin{equation}\label{byconstr2}
\ell\big(\arc{\u{PP}_{N-1}}\big) \leq 4 \lcurve{BC}
\end{equation}
by inductive assumption, thus~(\ref{byconstr}) and~(\ref{byconstr2}) give
\[
\ell\big( \arc{\u{PP}_N} \big) = \ell\big(\arc{\u{PP}_{N-1}}\big) + \lsegm{DD'}
\leq 4 \lcurve{BC} + \lcurve{AC} \leq 4 \lcurve{AB}\,,
\]
hence also~(ii) is done.\par
It remains now to consider~(iii). Thus we take two points $\u{D}\equiv \u{P}_{N-1}$ and $\u{E}\equiv \u{Q}_{\widetilde{N}-1}$ on $\u{BC}$, denoting for brevity $N=N(\u{P})$ and $\widetilde N = N(\u{Q})$. We have to consider separately the two cases arising if $\u{BC}$ lies in the boundary or in the interior of $\Delta$. In the first case, $\u{P}\equiv \u{D}$ and $\u{Q}\equiv \u{E}$, thus by the Lipschitz property of $u$ we have
\[
\frac{\ell\big(\arc{PQ}\big)}{L} \leq \lcurve{PQ}= \lsegm{DE} = \lsegm{D'E'}\,,
\]
so that~(iii) is trivially true. In the second case, $\lsegm{D'E'}=\lsegm{DE}$, so~(iii) is true by inductive assumption.\par\bigskip

To conclude the proof, we now have to see what happens when $\u{C}_1\neq \u{C}^+$. We will further subdivide this last case depending on whether $\beta > \pi/12$ or not, being $\beta=\uangle{A}{B}{C}$.

\part{3}{Construction for the case $\u{C}_1 \neq \u{C}^+$, $\beta\geq  15^\circ$.}
 In this case, for any $\u{D}\in \u{BC}$ we define $\u{D'}\in \u{BC}_1$ as the point satisfying
\begin{equation}\label{defd'}
\lsegm{BD'} = \min \bigg\{ \lsegm{BD}\,,\, \ell\big(\u{BC}_1\big) - \frac{\ell\big(\arc{PC}\big)}{7L}  \bigg\} \,,
\end{equation}
being as usual $\u{P}\in \arc{\u{BC}}$ the point such that $\u{D}=\u{P}_{N-1}$. Observe that this definition makes sense since, also using~(\ref{stimeAC}), one has that the minimum in~(\ref{defd'}) is between $0$ and $\ell\big(\u{BC}_1\big)$ for each $\u{D}\in\u{BC}$. In particular, the minimum is strictly increasing between $0$ and $\ell\big(\u{BC}_1\big)$ as soon as $\u{D}$ moves from $\u{B}$ to $\u{C}$, so~(iv) is already checked. Let us then check the validity of~(i)--(iii).\par

We first concentrate on~(i). Just for a moment, let us call $\u{D^*}\in \u{BC}^+$ the point for which $\lsegm{BD}=\lsegm{BD^*}$, so that the triangle $\u{BDD^*}$ is isosceles. Therefore, one immediately has
\begin{align}\label{anglep1}
\uangle{D}{D'}{B} \geq \uangle{D}{D^*}{B}= \frac{\pi- \beta}{2} \geq \frac \pi 4\,, &&
\uangle{D'}{D}{C}\geq \uangle{D^*}{D}{C}= \frac{\pi+\beta}{2}\geq \frac\pi 2\,.
\end{align}
Moreover, by construction it is clear that
\begin{equation}\label{anglep2}
\uangle{D}{D'}{A} \geq \uangle{D}{B}{A} = \beta \geq \frac \pi{12}\,.
\end{equation}
To conclude, we have to estimate $\uangle{D'}{D}{B}$, and we start claiming the bound
\begin{equation}\label{anglep2.5}
\lsegm{BD'} \geq \frac{\lsegm{BD}}{\sqrt{2} L^2}\,.
\end{equation}
In fact, recalling~(\ref{defd'}), either $\lsegm{BD'}=\lsegm{BD}$, and then~(\ref{anglep2.5}) clearly holds, or otherwise by~(\ref{stimeAC}) and the Lipschitz property of $u$
\[\begin{split}
\lsegm{BD'} &= \ell\big(\u{BC}_1\big) - \frac{\ell\big(\arc{PC}\big)}{7L}
\geq \frac{\ell\big(\arc{BC}\big)}{\sqrt{2}L}- \frac{\ell\big(\arc{PC}\big)}{7L}
\geq \frac{\ell\big(\arc{BC}\big) - \ell\big(\arc{PC}\big)}{\sqrt{2}L}
=\frac{\ell\big(\arc{BP}\big)}{\sqrt{2}L}
\geq \frac{\lcurve{BP}}{\sqrt{2}L^2}\\
&\geq \frac{\lsegm{BD}}{\sqrt{2}L^2}\,,
\end{split}\]
thus again~(\ref{anglep2.5}) is checked. Concerning the last inequality, namely $\lcurve{BP}\geq \lsegm{BD}$, this is an equality if the segment $\u{BC}$ belongs to $\partial\Delta$, while otherwise it is true by inductive assumption on the sector $\DoubleS(\arc{\u{BC}})$, applying~(iii) to the points $\u{P}$ and $\u{Q}\equiv \u{B}$. Consider now the triangle $\u{DBD'}$: immediate trigonometric arguments tell us that
\begin{align*}
\lsegm{DD'} \sin \big( \uangle{D'}{D}{B} \big) = \lsegm{BD'} \sin\beta\,, &&
\lsegm{BD} \sin\beta = \lsegm{DD'} \sin \Big( \uangle{D'}{D}{B}+\beta\Big)\,,
\end{align*}
from which we get, using also~(\ref{anglep2.5}),
\begin{equation}\label{anglep3}
\sin\big( \uangle{D'}{D}{B} \big) = \frac{\lsegm{BD'}}{\lsegm{BD}}\, \sin\Big( \uangle{D'}{D}{B}+\beta\Big)
\geq \frac{\sin 15^\circ}{\sqrt{2} L^2} \geq \frac{1}{6L^2}\,.
\end{equation}
Putting together~(\ref{anglep1}), (\ref{anglep2}) and~(\ref{anglep3}), we conclude the inspection of~(i).\par
Concerning~(ii), it is enough to observe that
\begin{equation}\label{stepii3}
\frac{\lsegm{DD'}}{\lcurve{AC}} \leq 
\frac{\lsegm{DD'}}{\lsegm{AC}} \leq
\frac{\sin\big(\uangle CAB\big)}{\sin\big(\uangle D{D'}A\big)}
 \leq \frac{1}{\sin 15^\circ }\leq 4\,.
\end{equation}
Therefore, as in Part~2, either $\u{BC}\subseteq\partial\Delta$, and then
\[
\ell\big( \arc{\u{PP}_N} \big) = \lsegm{DD'} \leq 4 \lcurve{AC} \leq 4 \lcurve{AB}\,,
\]
or thanks to the inductive assumption one has
\[
\ell\big( \arc{\u{PP}_N} \big) = \ell\big(\arc{\u{PP}_{N-1}}\big) + \lsegm{DD'}
\leq 4 \lcurve{BC} + 4 \lcurve{AC} = 4 \lcurve{AB}\,,
\]
so~(ii) is again easily checked.\par
Let us now consider~(iii). As in Part~2, we take on $\u{BC}$ two points $\u{D}\equiv \u{P}_{N-1}$ and $\u{E}\equiv \u{Q}_{\widetilde{N}-1}$ with $N=N(\u{P})$ and $\widetilde N = N(\u{Q})$, and we assume by symmetry that $\lsegm{BD}\leq \lsegm{BE}$. Since it is surely $\lsegm{DE} \leq \lcurve{PQ}$, either as a trivial equality if $\u{BC}\subseteq\partial\Delta$, or by inductive assumption otherwise, showing~(iii) consists in proving that
\begin{equation}\label{step3iii}
\frac{\ell\big(\arc{PQ}\big)}{7L} \leq \lsegm{D'E'}\leq \lsegm{DE}\,.
\end{equation}
We start with the right inequality. Recalling the definition~(\ref{defd'}), if $\lsegm{BD'}=\lsegm{BD}$ then, since $\lsegm{BE'}\leq\lsegm{BE}$, one has
\[
\lsegm{D'E'} = \lsegm{BE'}-\lsegm{BD'} \leq \lsegm{BE} - \lsegm{BD} = \lsegm{DE}\,.
\]
On the other hand, if
\[
\lsegm{BD'}=\ell\big(\u{BC}_1\big) - \frac{\ell\big(\arc{PC}\big)}{7L}\,,
\]
then we get
\[\begin{split}
\lsegm{D'E'} &= \lsegm{BE'}-\lsegm{BD'} 
\leq \Bigg(\ell\big(\u{BC}_1\big) - \frac{\ell\big(\arc{QC}\big)}{7L}\,\Bigg) - \Bigg(\ell\big(\u{BC}_1\big) - \frac{\ell\big(\arc{PC}\big)}{7L}\,\Bigg)\\
&= \frac{\ell\big(\arc{PQ}\big)}{7L} \leq \lsegm{DE}\,,
\end{split}\]
where again the last inequality is true either by the Lipschitz property of $u$ if $\u{PQ}=\u{DE}$, or by inductive assumption otherwise. Thus, the right inequality in~(\ref{step3iii}) is established, and we pass to consider the left one.\par
Still recalling~(\ref{defd'}), if $\lsegm{BE'}=\lsegm{BE}$ then
\[
\lsegm{D'E'} = \lsegm{BE'}-\lsegm{BD'} \geq \lsegm{BE} - \lsegm{BD} = \lsegm{DE} \geq \frac{\ell\big(\arc{PQ}\big)}{7L}\,,
\]
being again the last equality true either by the Lipschitz property of $u$ or by inductive assumption. Finally, if
\[
\lsegm{BE'}= \ell\big(\u{BC}_1\big) - \frac{\ell\big(\arc{QC}\big)}{7L}\,,
\]
then again we get
\[\begin{split}
\lsegm{D'E'} &= \lsegm{BE'}-\lsegm{BD'} 
\geq \Bigg(\ell\big(\u{BC}_1\big) - \frac{\ell\big(\arc{QC}\big)}{7L}\,\Bigg) - \Bigg(\ell\big(\u{BC}_1\big) - \frac{\ell\big(\arc{PC}\big)}{7L}\,\Bigg)
= \frac{\ell\big(\arc{PQ}\big)}{7 L}\,,
\end{split}\]
so the estimate~(\ref{step3iii}) is completely shown and then this part is concluded.
\part{4}{Construction for the case $\u{C}_1 \neq \u{C}^+$, $\beta<  15^\circ$.}
We are now ready to consider the last --and hardest-- possible situation, namely when $\u{C}_1\neq \u{C}^+$ and the angle $\beta$ is small. Roughly speaking, the fact that $\u{C}_1$ is below $\u{C}^+$ tells us that the segment $\u{BC}$ has to shrink, in order to fit into $\u{BC}_1$. On the other hand, the fact that $\beta$ is small makes it hard to obtain simultaneously the estimate~(iii) on the lengths and the~(i) on the angles. As in Figure~\ref{Fig:step4bis}, we call $\u{H}$ the orthogonal projection of $\u{C}$ on $\u{AB}$.\par
Since $\beta<\pi/12$, the point $\u{C}^-$ belongs to the segment $\u{AB}$, and then we obtain, by a trivial geometrical argument, that
\begin{equation}\label{shrfac}
\ell\big(\u{BC}_1\big) \geq \ell\big(\u{BC}^-\big) \geq \lsegm{BH} - \lsegm{CH} = \lsegm{BC} \Big( \cos\beta - \sin\beta\Big) \geq \frac{\sqrt{2}}2 \,\lsegm{BC}\,.
\end{equation}
Let us immediately go into our definition of $\u{P}_N$ for every vertex $\u{P}\in\arc{\u{BC}}$. First of all, since we need to work with consecutive vertices, let us enumerate all the vertices of $\arc{\u{BC}}$ as $\u{P}^0=\u{B},\, \u{P}^1,\, \u{P}^2,\,\dots,\, \u{P}^M=\u{C}$. The simplest idea to define the points $\u{P}^i_N$ would be to shrink all the segment $\u{BC}$ so to fit into $\u{BC}_1$, thus getting, for any pair $\u{P}^i,\,\u{P}^{i+1}$ of consecutive vertices,
\[
\ell\big(\u{P}^i_N\u{P}^{i+1}_N\big) = \frac{\ell\big(\u{BC}_1\big)}{\lsegm{BC}}\,\ell\big( \u{P}^i_{N-1}\u{P}^{i+1}_{N-1}\big)\,.
\]
Unfortunately, this does not work, since from the inductive assumption
\[
\ell\big(\u{P}^i_{N-1}\u{P}^{i+1}_{N-1}\big) \geq \frac{1}{7 L} \,\ell\big(\arc{P^i P^{i+1}}\big)
\]
one would be led to deduce
\[
\ell\big(\u{P}^i_N\u{P}^{i+1}_N\big) \geq \frac{\ell\big(\u{BC}_1\big)}{\lsegm{BC}}\,\frac{1}{7L}\,\ell\big(\arc{P^i P^{i+1}}\big)
\geq \frac{\sqrt{2}}{14 L}\,\ell\big(\arc{P^iP^{i+1}}\big)\,,
\]
by~(\ref{shrfac}), so the induction would not work.\par
However, our idea to overcome the problem is very simple, that is, among all the pairs $\u{P}^i,\,\u{P}^{i+1}$ of consecutive vertices we will shrink only those which are still ``shrinkable'', that is, for which the ratio
\begin{equation}\label{defrhoi}
\varrho_i := \frac{\ell\big(\u{P}^i_{N-1}\u{P}^{i+1}_{N-1}\big)}{\ell\big(\arc{P^iP^{i+1}}\big)}
\end{equation}
is not already too small, more precisely, not smaller than $1/(3L)$. Let us make this formally. Define
\begin{equation}\label{defdelta}
\delta := \sum \bigg\{ \ell\big(\u{P}^i_{N-1}\u{P}^{i+1}_{N-1}\big):\, \varrho_i  \leq \frac{1}{3L} \bigg\}\,,
\end{equation}
and notice that
\[
\ell\big(\arc{BC}\big) \geq\sum \bigg\{  \ell\big(\arc{P^iP^{i+1}}\big):\,\varrho_i\leq\frac{1}{3L} \bigg\} \geq 3L \delta\,,
\]
then by~(\ref{stimaL^2})
\begin{equation}\label{estidelta}
\delta \leq \frac{\ell\big(\arc{BC}\big)}{3L} \leq \frac{\sqrt{2}}{3}\, \lsegm{BC}\,.
\end{equation}
Finally, we define the points $\u{P}^i_N$ in such a way that any segment $\u{P}^i_N\u{P}^{i+1}_N$ has the same length as $\u{P}^i_{N-1}\u{P}^{i+1}_{N-1}$ if $\varrho_i$ is small, and otherwise it is rescaled by a factor $\lambda<1$ (constant through all $\u{BC}$). In other words, setting the increasing sequence $\delta_i$ as
\begin{equation}\label{defdeltai}
\delta_i := \sum \bigg\{ \ell\big(\u{P}^j_{N-1}\u{P}^{j+1}_{N-1}\big):\, j<i,\, \varrho_j  \leq \frac{1}{3L} \bigg\}\,,
\end{equation}
so that comparing with~(\ref{defdelta}) one has $\delta_0=0$ and $\delta_M=\delta$, we define $\u{P}^i_N$ to be the point of $\u{BC}_1$ such that
\begin{equation}\label{defpin}
\ell\big( \u{BP}^i_N\big) = \delta_i + \lambda \Big( \ell\big(\u{BP}^i_{N-1}\big) - \delta_i\Big)\,.
\end{equation}
The constant $\lambda$ is easily estimated by the constraint that $\u{P}^M_N=\u{C}_1$ and by~(\ref{shrfac}) and~(\ref{estidelta}), getting
\begin{equation}\label{estilambda}
1> \lambda =  \frac{\ell\big(\u{BC}_1\big) - \delta}{ \lsegm{BC} - \delta }
\geq \frac{\frac{\sqrt{2}}{2}\lsegm{BC} - \delta}{ \lsegm{BC} - \delta }
\geq \frac{\frac{\sqrt{2}}{2}-\frac{\sqrt{2}}{3}}{1-\frac{\sqrt{2}}{3}}> \frac 37\,.
\end{equation}
For future reference, it is also useful to notice here another estimate of $\lambda$ which depends on $\beta$, obtained exactly as the one above from~(\ref{shrfac}) and~(\ref{estidelta}), that is,
\begin{equation}\label{estilambda2}
\lambda =  \frac{\ell\big(\u{BC}_1\big) - \delta}{ \lsegm{BC} - \delta }
\geq \frac{\lsegm{BC}\big(\cos\beta - \sin\beta\big) - \delta}{ \lsegm{BC} - \delta }
\geq \frac{\cos\beta-\sin\beta-\frac{\sqrt{2}}{3}}{1-\frac{\sqrt{2}}{3}}\,.
\end{equation}
Notice that by~(\ref{defdeltai}) and~(\ref{defpin}) one readily gets
\begin{equation}\label{lpipi+1}
\ell\big( \u{P}^i_N\u{P}^{i+1}_N  \big) = \left\{\begin{array}{rl}
\bal\ell\big( \u{P}^i_{N-1}\u{P}^{i+1}_{N-1}\big)\eal \qquad  &\bal\mathrm{if}\ \varrho_i \leq \frac{1}{3L}\,,\eal\\
\bal\lambda\,\ell\big( \u{P}^i_{N-1}\u{P}^{i+1}_{N-1}\big)\eal\qquad  &\bal\mathrm{otherwise}\eal\,.
\end{array}\right.
\end{equation}
Now that we have given the definition of the points $\u{P}^i_N$, we only have to check the validity of~(i)--(iii), since~(iv) is again trivial by definition.\par
\begin{figure}[htbp]
\begin{center}
\input{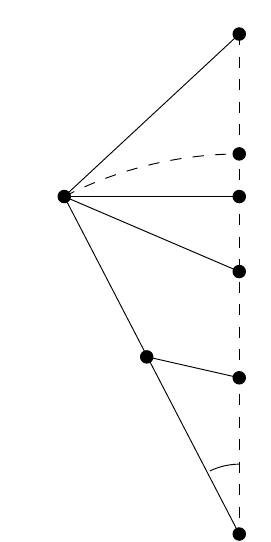_t}\vspace{-10pt}
\caption{The triangle $\u{ABC}$ in Part~4.}\label{Fig:step4bis}
\end{center}
\end{figure}
Let us start with~(i). Take $0\leq i\leq M$ and call, as before, $\u{D}=\u{P}^i_{N-1}$ and $\u{D'}=\u{P}^i_N$. Since by construction $\lsegm{BD'}\leq \lsegm{BD}$, then one immediately gets $\uangle D{D'}B \geq \uangle {D'}DB$, from which one directly gets
\begin{align}\label{firsttwo}
\uangle D{D'}B \geq \frac{\pi - \beta}{2} \geq \frac{11}{24}\, \pi\,, &&
\uangle {D'}DC = \pi - \uangle {D'}DB \geq \frac{\pi+\beta}{2}\geq \frac \pi 2\,,
\end{align}
so that the first two angles are checked and we need to estimate $\uangle {D'}DB$ and $\uangle D{D'}A$. To do so, let us call $\u{C^*}\in\u{AB}$ the point such that $\lsegm{BC^*} = \lambda\, \lsegm{BC}$, so that by construction
\begin{align}\label{trivial}
\uangle {D'}DB \geq \uangle {C^*}CB \,, && \uangle D{D'}A \geq \uangle C{C^*} A\,.
\end{align}
The point $\u{C^*}$ must lie either between $\u{H}$ and $\u{C^+}$ or between $\u{B}$ and $\u{H}$. In the first case also the other two angles are immediately estimated, since then by~(\ref{trivial}) one has
\begin{align}\label{lasttwo}
\uangle{D'}DB \geq \uangle {C^*}CB \geq \uangle HCB = \frac \pi 2 - \beta \geq \frac{5}{12}\,\pi\,, &&
\uangle D{D'}A \geq \uangle C{C^*} A \geq \frac \pi 2\,.
\end{align}
Assume then that, as in Figure~\ref{Fig:step4bis}, $\u{C^*}$ is between $\u{B}$ and $\u{H}$. Then we can estimate, also recalling~(\ref{estilambda2}),
\[\begin{split}
\lsegm{C^*H} &= \lsegm{BH}-\lsegm{BC^*} 
= \lsegm{BC} \Big( \cos \beta - \lambda\Big)\\
&\leq \lsegm{BC} \bigg( \cos \beta - \frac{\cos\beta-\sin\beta-\frac{\sqrt{2}}{3}}{1-\frac{\sqrt{2}}{3}} \bigg) 
=\lsegm{BC}\frac{\frac{\sqrt{2}}{3}\,\frac{\sin\beta}{1+\cos\beta} +1}{1-\frac{\sqrt{2}}{3}} \,\sin\beta\,.
\end{split}\]
As a consequence, we have
\[\begin{split}
\uangle HC{C^*} &= \arctan \bigg( \frac{\lsegm{C^*H}}{\lsegm{CH}}\bigg)
\leq \arctan \bigg(\frac{\frac{\sqrt{2}}{3}\,\frac{\sin\beta}{1+\cos\beta} +1}{1-\frac{\sqrt{2}}{3}}\bigg)\\
&\leq \arctan \bigg(\frac{\frac{\sqrt{2}}{3}\,\frac{\sin 15^\circ}{1+\cos 15^\circ} +1}{1-\frac{\sqrt{2}}{3}}\bigg)
\leq 0.36 \,\pi < 65^\circ \,.
\end{split}\]
Finally, from this estimate and~(\ref{trivial}) we get
\begin{equation}\label{thirdfourth}\begin{split}
\uangle {D'}DB &\geq \uangle {C^*}CB = \frac \pi 2 - \beta - \uangle HC{C^*} > \frac{\pi}{18} \,, \\
\uangle D{D'}A &\geq \uangle C{C^*} A = \frac \pi 2 - \uangle HCC^* \geq 25^\circ \,.
\end{split}\end{equation}
Putting together the first two estimates from~(\ref{firsttwo}), and the last two estimates either from~(\ref{lasttwo}) or from~(\ref{thirdfourth}), we conclude the proof of~(i).\par
Let us now check~(ii). Repeating the argument of Part~3, we have that~(ii) follows at once as soon as one shows~(\ref{stepii3}), that is, $\lsegm{DD'} \leq 4\, \lcurve{AC}$. But in fact, using~(\ref{thirdfourth}), we immediately get
\[
\lsegm{DD'} \leq \frac{\lsegm{CH}}{\sin \big(\uangle D{D'}A\big)} 
\leq \frac{\lsegm{AC}}{\sin \big(\uangle D{D'}A\big)}
\leq \frac{\lcurve{AC}}{\sin 25^\circ} 
< 4\, \lcurve{AC}\,.
\]
Let us then consider~(iii). It is of course sufficient to check the validity of the inequality only when $\u{P}$ and $\u{Q}$ are two consecutive vertices of $\u{\arc{BC}}$. Let us then take $0\leq i< M$ and recall that we have to show
\begin{equation}\label{step4iii}
\frac{\ell\big( \arc{P^iP^{i+1}}\big)}{7L} \leq \ell\big( \u{P}^i_N\u{P}^{i+1}_N\big)\leq\ell\big(\u{P}^i_{N-1}\u{P}^{i+1}_{N-1}\big)
\end{equation}
knowing, again either by inductive assumption or by the Lipschitz property,
\begin{equation}\label{assstep4iii}
\frac{\ell\big( \arc{P^iP^{i+1}}\big)}{7L} \leq \ell\big( \u{P}^i_{N-1}\u{P}^{i+1}_{N-1}  \big) \leq \ell\big( \arc{\u{P}^i\u{P}^{i+1}}\big)\,.
\end{equation}
The right inequality in~(\ref{step4iii}) is an immediate consequence of~(\ref{lpipi+1}), being $\lambda<1$. Concerning the left inequality, it is also quick to check, distinguishing whether $\varrho_i$ is small or not. In fact, if $\varrho_i\leq 1/(3L)$, then by~(\ref{lpipi+1}) also the left inequality in~(\ref{step4iii}) derives from the analogous inequality in~(\ref{assstep4iii}). Otherwise, if $\varrho_i>1/(3L)$, then one directly has by~(\ref{lpipi+1}), (\ref{defrhoi}) and~(\ref{estilambda}) that
\[
\ell\big( \u{P}^i_N\u{P}^{i+1}_N  \big) = 
\lambda\,\ell\big( \u{P}^i_{N-1}\u{P}^{i+1}_{N-1}\big)
=\lambda\varrho_i \, \ell\big(\arc{P^iP^{i+1}}\big)
> \frac{1}{3L} \,\lambda\, \ell\big(\arc{P^iP^{i+1}}\big)
> \frac{1}{7L} \,\ell\big(\arc{P^iP^{i+1}}\big)\,,
\]
thus concluding the proof.
\end{proof}

\bigstep{V}{Bound on the lengths of the paths $\arc{\u{PP}_N}$}

In Step~IV, we have described how to get a piecewise affine path $\u{PP}_1\u{P}_2\cdots \u{P}_N$ which starts from any vertex $\u{P}\in\arc{\u{AB}}$ and ends on the segment $\u{AB}$, being $\DoubleS(\u{AB})$ a given sector. In this step, we want to improve the estimate from above of the length of this path. This is important because this path will be (up to a small correction in the future) part of the image of the segment $PO\subseteq \D$ under the extension $v$ of $u$ that we are building, and then its length gives a lower bound to the Lipschitz constant of the map $v$. After a short definition, we will state the main result of this step.

\begin{definition}
Let $\DoubleS(\u{AB})$ be a given sector, $\u{P}\in\arc{\u{AB}}$ and let $\u{PP}_1\u{P}_2\cdots \u{P}_N$ be the piecewise affine path given by Lemma~\ref{lemmastep4}. We will then denote this piecewise affine path as $\arc{\u{PP}_N}$. More in general, for any $1\leq i < j \leq N$, we will denote by $\arc{\u{P}_i\u{P}_j}$ the piecewise affine path $\u{P}_i\u{P}_{i+1}\cdots \u{P}_j$.
\end{definition}

\begin{lemma}\label{lemmastep5}
Let $\DoubleS(\u{AB})$ be a sector. Then, for any $\u{P}\in\arc{\u{AB}}$ one has
\[
\ell\big( \arc{\u{PP}_N}\big) \leq 113\, \min \Big\{ \lcurve{AP},\, \lcurve{PB}\Big\}\,.
\]
\end{lemma}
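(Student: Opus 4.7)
Plan: The bound refines Lemma~\ref{lemmastep4}~(ii) by tracking where $\u{P}$ sits on the arc. I would argue by induction on the weight of the sector $\DoubleS(\u{AB})$, parallel to the strategy of Lemmas~\ref{lemmastep3} and~\ref{lemmastep4}. By symmetry assume $\lcurve{PB}\le\lcurve{PA}$, so that it suffices to show $\ell(\arc{\u{PP}_N})\le 113\,\lcurve{PB}$. Let $\T_N=\u{ABC}$ be the greatest triangle of the partition (with exit side $\u{AB}$), and without loss of generality $\u{P}\in\arc{\u{BC}}$.

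The core idea is a three-way decomposition of the natural sequence $\T_1,\dots,\T_N$ relative to $\u{P}$. Let $k$ be the smallest index such that $\T_i$ has $\u{B}$ as a vertex for every $i\ge k$; equivalently, $k$ is the first level at which the sub-sector $\DoubleS_i$ carries $\u{B}$ as an endpoint of its arc. The sub-sector $\DoubleS_{k-1}$ has strictly smaller weight and does not contain $\u{B}$ in its arc; let its endpoints be $\u{Y}$ and $\u{Z}$, with $\u{Z}$ lying between $\u{B}$ and $\u{P}$ on $\arc{\u{AB}}$. The path decomposes as the pre-jump $\arc{\u{PP}_{k-1}}$ inside $\DoubleS_{k-1}$, the jump segment $\u{P}_{k-1}\u{P}_k$ crossing the transition triangle $\T_k$, and the $B$-sweep $\u{P}_k\cdots\u{P}_N$ through the ``$B$-triangles'' $\T_k,\dots,\T_N$.

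For the pre-jump the inductive hypothesis gives $\ell(\arc{\u{PP}_{k-1}})\le 113\,\lcurve{PZ}$. For the jump, $\T_k$ has vertices $\u{B},\u{Y},\u{Z}$, entry side $\u{YZ}$, exit side $\u{BY}$, and third side $\u{BZ}$; applying Lemma~\ref{lemmastep4}~Parts~2--4 (with $\u{B}$ and $\u{Z}$ playing the roles of ``$\u{A}$'' and ``$\u{C}$'' there) yields $\ell(\u{P}_{k-1}\u{P}_k)\le 4\,\lcurve{BZ}$. Since $\lcurve{PZ}+\lcurve{BZ}=\lcurve{PB}$, the pre-jump and jump together are at most $113\,\lcurve{PB}$. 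For the $B$-sweep I would apply Lemma~\ref{lemmastep4}~(iii) with $\u{Q}=\u{B}$ (whose good path is trivial, ending at $\u{B}$ itself): the distances $r_i:=\ell(\u{BP}_i)$ then satisfy $r_i\le\lcurve{PB}$ for every $i\ge k$, and are non-increasing in $i$. The angles $\alpha_i:=\uangle{P_{i-1}BP_i}$ at $\u{B}$ inside the $B$-triangles are non-overlapping, so their sum is bounded by the interior angle of $\Delta$ at $\u{B}$, hence by $2\pi$. A law-of-cosines estimate $\ell(\u{P}_{i-1}\u{P}_i)\le(r_{i-1}-r_i)+r_{i-1}\alpha_i$ then makes the first term telescope to at most $\lcurve{PB}$ and the second sum to at most $2\pi\,\lcurve{PB}$, giving a $B$-sweep contribution of at most $(1+2\pi)\,\lcurve{PB}$.

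The main obstacle is closing the induction with the stated constant. Summing naively yields about $(113+1+2\pi)\,\lcurve{PB}\approx 120\,\lcurve{PB}$, which slightly exceeds $113\,\lcurve{PB}$. I would finish with a case split on the ratio $\lcurve{PB}/\lcurve{AB}$: when this ratio is above a suitable threshold, Lemma~\ref{lemmastep4}~(ii) already gives $\ell(\arc{\u{PP}_N})\le 4\,\lcurve{AB}\le 113\,\lcurve{PB}$ directly; when it is small, the inductive pre-jump bound has strict slack since $\lcurve{PZ}<\lcurve{PB}$, which can be used to absorb the $(1+2\pi)\,\lcurve{PB}$ coming from the $B$-sweep. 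Tuning this threshold against the precise constants appearing in the construction of Lemma~\ref{lemmastep4} is the delicate technical point of the proof.
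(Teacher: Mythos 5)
Your individual ingredients are sound, and in fact they mirror pieces of the paper's own argument: the chord estimate $\ell(\u{P}_{i-1}\u{P}_i)\le (r_{i-1}-r_i)+r_{i-1}\alpha_i$ is~(\ref{units11}), the monotonicity and the bound $r_i\le\lcurve{PB}$ via Lemma~\ref{lemmastep4}~(iii) with $\u{Q}\equiv\u{B}$ is used verbatim in the paper, and the jump bound $4\,\lcurve{ZB}$ is~(\ref{stepii3}). The gap is in the assembly. Your inductive step requires
\[
113\,\lcurve{PZ}+4\,\lcurve{ZB}+(1+2\pi)\,\lcurve{PB}\;\leq\;113\,\lcurve{PB}\,,
\qquad\text{i.e.}\qquad (1+2\pi)\,\lcurve{PB}\;\leq\;109\,\lcurve{ZB}\,,
\]
and there is no lower bound on $\lcurve{ZB}/\lcurve{PB}$: the last exit side before the $\u{B}$-triangles may end at a point $\u{Z}$ whose arc-distance to $\u{B}$ is a tiny fraction of $\lcurve{PB}$, while the sweep radius $r_k$ (a point of the long exit side $\u{YB}$) remains comparable to $\lcurve{PB}$. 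So the deficit is not the fixed overshoot ``$\approx 120$ versus $113$'' you describe; written correctly, this induction fails for \emph{every} constant $K$ in place of $113$, because the only new arc made available at each peeling, namely $\arc{\u{ZB}}$, cannot absorb a charge proportional to $\lcurve{PB}$. For the same reason the proposed repair cannot work: the slack in the inductive bound is exactly $113\,\lcurve{ZB}$, and the smallness of $\lcurve{PB}/\lcurve{AB}$ gives no control whatsoever on $\lcurve{ZB}/\lcurve{PB}$ (the threshold case $\lcurve{PB}\geq \tfrac4{113}\lcurve{AB}$ is fine, but the complementary case is untouched).

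This is precisely the spiral obstruction the paper flags after Lemma~\ref{intrsystem}: repeated near-full turns of the path around pivots that advance along $\arc{\u{PB}}$ only by tiny arc increments. The paper does not run an induction on the constant at all; it groups the triangles into units, then into systems of units on which the no-spiral conditions~(\ref{ass1'})--(\ref{ass2'}) bound the total swept angle by $\tfrac{13}6\pi$, and then into blocks, where the decay condition and~(\ref{blocks2}) allow the segment lengths $\ell\big(\u{R}_{p_i}\u{F}_{p_i}\big)$ to be charged to the arc already traversed in the \emph{previous} block. If you want to keep your inductive format, you would have to strengthen the inductive hypothesis so that it carries quantitative information beyond the single constant --- e.g.\ tracking the terminal distance $\ell\big(\u{P}_N\u{B}\big)$ as a potential, which is what~(\ref{step53}), (\ref{stepsystem}) and~(\ref{blocks2}) effectively do --- rather than tuning a threshold on $\lcurve{PB}/\lcurve{AB}$.
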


Before entering into the proof, which is quite involved, let us quickly give a rough idea of how it works, together with some useful notation. Let us fix a generic point $\u{P}\in\arc{\u{AB}}$. The proof of the lemma will require a detailed analysis of the different triangles of the natural sequence of triangles related to $\u{P}$. Recall that the natural sequence of triangles, according with Definition~\ref{def: nat seq}, is the sequence $\big(\T_1,\, \T_2,\, \dots\,,\, \T_N\big)$ such that every $\u{P}_i$ of the path $\arc{\u{PP}_N}$  belongs to the exit side of $\T_i$. Let us start by calling for simplicity $\u{A}_i\u{B}_i$ the exit side of the triangle $\T_i$, being $\u{A}_i \in \arc{\u{AP}}$ and $\u{B}_i\in\arc{\u{PB}}$, so that in particular $\u{A}_N = \u{A}$ and $\u{B}_N=\u{B}$. Moreover, we call $\u{A}_0\u{B}_0$ the side of $\T_1$ which contains $\u{P}=\u{P}_0$. Notice that, by the construction of the triangles done in Step~III, for any $i$ the exit side of the triangle $\T_i$ is a side of the triangle $\T_{i+1}$, thus the exit sides of $\T_i$ and $\T_{i+1}$ have exactly one point in common. In other words, either $\u{A}_{i+1}=\u{A}_i$, or $\u{B}_{i+1}=\u{B}_i$. Let us then assume, by simmetry, that $\ell(\u{\arc{PB}})\leq\ell(\u{\arc{AP}})$, so that the claim of Lemma~\ref{lemmastep5} can be rewritten as
\begin{equation}\label{wewant}
\sum_{i=0}^{N-1} \ell(\u{P}_i\u{P}_{i+1}) \leq 113 \bigg( \ell(\u{P}_0\u{B}_0) + \sum_{i=0}^{N-1} \ell(\u{B}_i\u{B}_{i+1}) \bigg)\,.
\end{equation}
Pick now a generic $0 \leq i < N$: on one hand, if $\u{B}_{i+1}\neq\u{B}_i$, then we will see that property $(i)$ of Lemma \ref{lemmastep4} implies
\[
\ell(\u{P}_i\u{P}_{i+1})\leq4\ell(\u{B}_i\u{B}_{i+1})\,,
\]
and this is clearly in accordance with the validity of~(\ref{wewant}). But if, instead, $\u{B}_i=\u{B}_{i+1}$, then the length of the segment $\u{P}_i\u{P}_{i+1}$ does not apparently contribute to the increase of the path $\ell(\arc{\u{P}_0\u{B}_N})$. However, since by~(iii) of Lemma~\ref{lemmastep4} one has $\ell(\u{P}_{i+1}\u{B}_i)=\ell(\u{P}_{i+1}\u{B}_{i+1})\leq \ell(\u{P}_i\u{B}_i)$, then it is reasonable to guess that the total length $\ell(\arc{\u{P}_i \u{P}_j})$ for $\u{B}_i=\u{B}_j$ cannot be too large: obtaining such a precise estimate is basically what we need to show Lemma~\ref{lemmastep5}. To do so, our strategy will be to group the triangles $\T_i$ in a suitable way, in order to get the informations that we need. In particular, we will first subdivide the natural sequence of triangles $\big(\T_1,\, \T_2,\, \dots\,,\, \T_N\big)$ into sequences of consecutive triangles $\U=\big(\T_{i},\, \T_{i+1},\, \dots\,,\, \T_{i+j}\big)$ called ``units'', then we will group consecutive sequences of ``units'' into ``systems of units'' $\DoubleSS=\big(\U_{i},\, \U_{i+1},\, \dots\,,\, \U_{i+j}\big)$, and finally consecutive sequences of ``systems of units'' into ``blocks of systems'' $\BB=\big(\DoubleSS_{i},\, \DoubleSS_{i+1},\, \dots\,,\, \DoubleSS_{i+j}\big)$. At the end, this construction will lead to the validity of~(\ref{wewant}).\bigskip

We can now start our construction introducing the first category.

\begin{definition}
Let $0\leq i \leq j \leq N$ be such that $\{ i,\, i+1 ,\, \cdots \,,\, j-1,\, j\}$ is a maximal sequence with the property that $\u{B}_l$ is the same point for all $i\leq l \leq j$ (by ``maximal'' we mean that either $i=0$ or $\u{B}_{i-1}\neq \u{B}_i$, as well as either $j=N$ or $\u{B}_j\neq \u{B}_{j+1}$). We will then say that $\U=\big(\T_{i+1},\, \T_2,\, \dots\,,\, \T_{j+1}\big)$ is a \emph{unit of triangles}, where $j+1$ is substituted by $j$ if $j=N$, and then no unit is defined if $i=j=N$. To any unit we associate two angles, namely,
\begin{align*}
\theta^+:=  \angle{\u{A}_{i}}{\u{B}_{i}}{\u{A}_j}\,, &&
\theta^-:= \angle{\u{B}_{j}}{\u{A}_j}{\u{B}_{j+1}}\,,
\end{align*}
with the convention that $\theta^-=0$ if $j=N$.
\end{definition}

\begin{figure}[htbp]
\begin{center}
\input{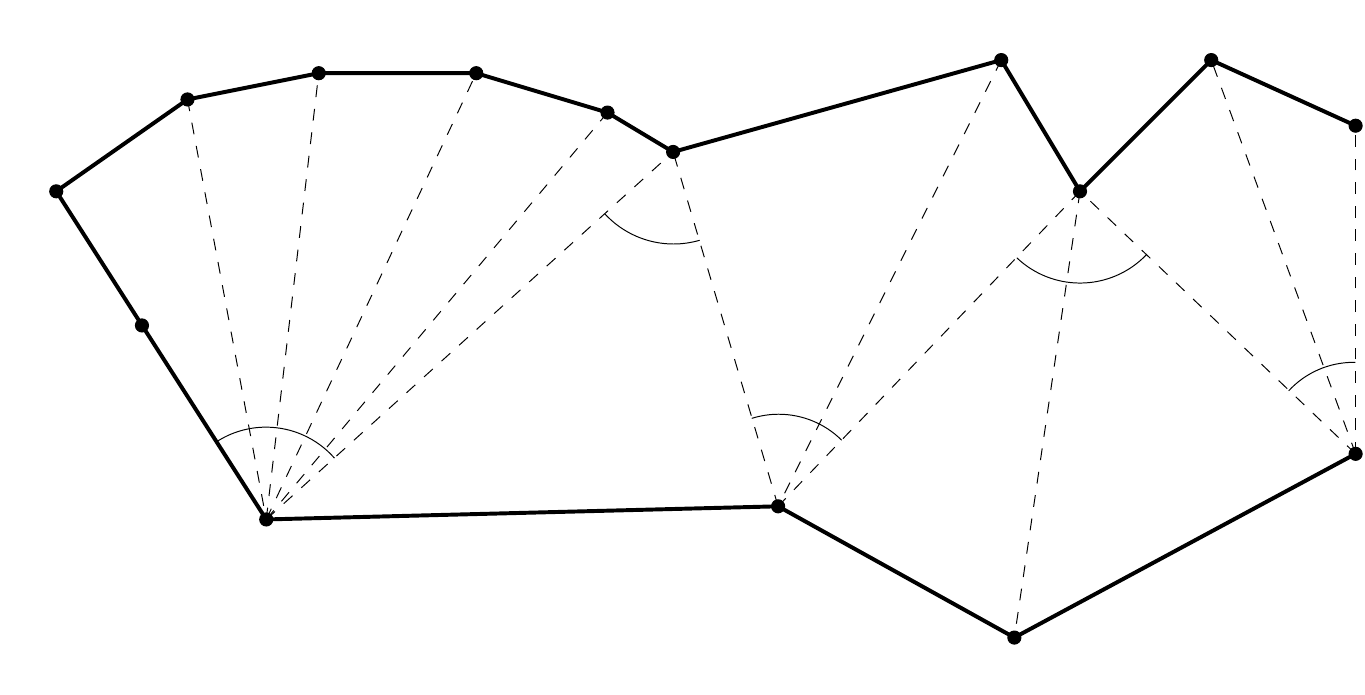_t}\vspace{-10pt}
\caption{A natural sequence of triangles $\T_i$ with the points $\u{A}_i$ and $\u{B}_i$ and the angles $\theta^\pm$.}\label{Fig:units}
\end{center}
\end{figure}

The reason for this strange definition with $i+1$ and $j+1$ will soon become clear. The meaning of the definition is quite simple: the first unit starts with $\T_1$ and ends with $\T_j$, where $j$ is the smaller index such that $\u{B}_j\neq \u{B}_1$. The second unit starts with $\T_{j+1}$ and ends with $\T_{j'}$, where $j'$ is the smaller index, possibly $j+1$ itself, for which $\u{B}_j\neq \u{B}_{j'}$. And so on, until one reaches $\T_N$, and then one has to stop regardless of whether or not $\u{B}_N$ is different from $\u{B}_{N-1}$. It is immediate from the definition to observe that the sequence of triangles $\big(\T_1,\, \T_2,\, \dots\,,\, \T_N\big)$ is the concatenation of the units of triangles. To understand how the units work, it can be useful to check the example of Figure~\ref{Fig:units}, where $N=12$ and the units of triangles are $\big( \T_1,\, \T_2,\,\T_3,\,\T_4,\,\T_5,\,\T_6\big)$, $\big( \T_7,\,\T_8,\,\T_9\big)$, $\big( \T_{10}\big)$ and $\big( \T_{11},\,\T_{12}\big)$. Notice also that for any unit of triangles one has $\theta^+>0$, unless the unit is made by a single triangle, as $\big(\T_{10}\big)$ in the figure. Similarly, one has that $\theta^->0$, unless $j=N$ and $\u{B}_j=\u{B}_{j-1}$, as $\big( \T_{11},\,\T_{12}\big)$ in the figure.\par

The role of the units is contained in the following result.

\begin{lemma}\label{lemmaunits}
Let $\U=\big(\T_i,\, \T_{i+1},\, \dots ,\, \T_j\big)$ be a unit of triangles. Then one has
\begin{align}
\ell\big(\arc{\u{P}_{i-1}\u{P}_j}\big)&\leq \big(1+\theta^+\big)\,\ell\big(\u{P}_{i-1}\u{B}_{i-1}\big)-\ell\big(\u{P}_j\u{B}_j\big)+5\, \ell\big(\u{B}_{i-1}\u{B}_j\big)\,, \label{step51} \\
\ell\big(\u{B}_{i-1}\u{B}_j\big) &\geq \frac{\theta^-}{\pi}\,\ell\big(\u{P}_j\u{B}_j\big)\,,\label{step52}\\
\ell\big(\u{P}_j\u{B}_j\big) &\leq \ell\big(\u{P}_{i-1}\u{B}_{i-1}\big) + \ell\big(\u{B}_{i-1}\u{B}_j\big)\,.\label{step53}
\end{align}
\end{lemma}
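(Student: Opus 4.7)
My plan is to prove the three inequalities in the order \eqref{step52}, \eqref{step53}, \eqref{step51}, each resting on the geometry of the ``jump triangle'' $\T_j$ (with vertices $\u{A}_{j-1}$, $\u{B}_{i-1}$, $\u{B}_j$, exit side $\u{A}_{j-1}\u{B}_j$ and angle $\theta^-$ at $\u{A}_{j-1}$, exploiting that $\u{A}_j=\u{A}_{j-1}$ since the jump is in the $\u{B}$-vertex) combined with the monotonicity
\[
\ell(\u{P}_l\u{B}_{i-1})\le\ell(\u{P}_{l-1}\u{B}_{i-1}),\qquad l=i,\dots,j-1,
\]
which follows from property~(iii) of Lemma~\ref{lemmastep4} applied to $\u{P}$ and to the vertex $\u{B}_{i-1}$: by the very definition of a unit, $\u{B}_{i-1}=\u{B}_l$ for every $l=i-1,\dots,j-1$, so this common point lies on each of the exit sides of $\T_{i-1},\dots,\T_{j-1}$ and plays the role of a stationary iterate of its own good path along the whole stretch.

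Inequality \eqref{step52} is then one line: the law of sines in $\T_j$, together with $\sin(\angle\u{A}_{j-1}\u{B}_{i-1}\u{B}_j)\le 1$ and $\u{P}_j\in\u{A}_{j-1}\u{B}_j$, yields $\ell(\u{B}_{i-1}\u{B}_j)\ge\ell(\u{A}_{j-1}\u{B}_j)\sin\theta^-\ge\ell(\u{P}_j\u{B}_j)\sin\theta^-$, and the bound $\theta^-\le\pi/2$ from Lemma~\ref{lemmastep3}(b) lets me replace $\sin\theta^-$ by $(2/\pi)\theta^-\ge\theta^-/\pi$. For \eqref{step53}, I extract from the explicit construction of $\u{P}_j$ out of $\u{P}_{j-1}$ given in Step~IV, Parts~2--4, applied inside $\T_j$ (with $\u{B}_{i-1}$ in the role of the apex ``$\u{C}$'' and $\u{A}_{j-1},\u{B}_j$ in the roles of the exit-side endpoints) the one-sided single-triangle bound
\begin{equation}\label{step53prime}
\ell(\u{P}_j\u{B}_j)\le\ell(\u{P}_{j-1}\u{B}_{i-1})+\ell(\u{B}_{i-1}\u{B}_j);
\end{equation}
for instance, in Part~2 the rule $\ell(\u{A}_{j-1}\u{P}_j)=\ell(\u{A}_{j-1}\u{P}_{j-1})$ together with the triangle inequality in $\T_j$ gives \eqref{step53prime} at once, and Parts~3 and~4 (which further shrink by a factor $\lambda\le 1$) give even sharper bounds. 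Chaining \eqref{step53prime} with the monotonicity at $l=j-1$ yields \eqref{step53}.

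The main work is \eqref{step51}. I split $\ell(\arc{\u{P}_{i-1}\u{P}_j})=\sum_{l=i}^{j-1}\ell(\u{P}_{l-1}\u{P}_l)+\ell(\u{P}_{j-1}\u{P}_j)$. For each inner segment $\u{P}_{l-1}\u{P}_l$, the triangle $\u{B}_{i-1}\u{P}_{l-1}\u{P}_l$ has angle $\alpha_l:=\angle\u{A}_{l-1}\u{B}_{i-1}\u{A}_l$ at $\u{B}_{i-1}$; writing $r_l:=\ell(\u{P}_l\u{B}_{i-1})$, the law of cosines together with $1-\cos\alpha_l\le\alpha_l^2/2$, the elementary $\sqrt{A+B}\le\sqrt A+\sqrt B$ for $A,B\ge 0$, and the monotonicity $r_l\le r_{l-1}$ yields
\[
\ell(\u{P}_{l-1}\u{P}_l)\le(r_{l-1}-r_l)+\alpha_l\,r_{l-1}.
\]
Telescoping, using $\sum_{l=i}^{j-1}\alpha_l=\theta^+$ and $r_{l-1}\le r_{i-1}$, gives $\sum_{l=i}^{j-1}\ell(\u{P}_{l-1}\u{P}_l)\le(1+\theta^+)\,r_{i-1}-r_{j-1}$. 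Running the same law-of-cosines trick inside $\T_j$ (angle $\theta^-$ at $\u{A}_{j-1}$, sides of length $a:=\ell(\u{A}_{j-1}\u{B}_{i-1})$ and $b:=\ell(\u{A}_{j-1}\u{B}_j)$ meeting there, with $\ell(\u{A}_{j-1}\u{P}_{j-1})=a-r_{j-1}$ and $\ell(\u{A}_{j-1}\u{P}_j)=b-\ell(\u{P}_j\u{B}_j)$) produces
\[
\ell(\u{P}_{j-1}\u{P}_j)\le|a-b|+|r_{j-1}-\ell(\u{P}_j\u{B}_j)|+\sqrt{ab}\,\theta^-.
\]
Here $|a-b|\le c:=\ell(\u{B}_{i-1}\u{B}_j)$ by the triangle inequality in $\T_j$; the law of sines in $\T_j$, combined with $\theta^-\le\pi/2$ and the exit-side angle bound of Lemma~\ref{lemmastep3}(b), gives $a\theta^-\le(\pi/2)c$ and $b\theta^-\le(\pi/2)c$, whence $\sqrt{ab}\,\theta^-\le(\pi/2)c$; and \eqref{step53prime} controls $\ell(\u{P}_j\u{B}_j)-r_{j-1}\le c$, so $|r_{j-1}-\ell(\u{P}_j\u{B}_j)|\le(r_{j-1}-\ell(\u{P}_j\u{B}_j))+2c$. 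Adding the three contributions,
\[
\ell(\u{P}_{j-1}\u{P}_j)\le(r_{j-1}-\ell(\u{P}_j\u{B}_j))+(3+\tfrac\pi 2)\,c\le(r_{j-1}-\ell(\u{P}_j\u{B}_j))+5c,
\]
which, combined with the telescoping estimate, gives \eqref{step51}.

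The main obstacle is precisely this last-segment estimate inside $\T_j$: obtaining the correct sign $-\ell(\u{P}_j\u{B}_j)$ on the right-hand side of \eqref{step51} hinges on the fact that \eqref{step53prime} is \emph{one-sided} (controlling $\ell(\u{P}_j\u{B}_j)-\ell(\u{P}_{j-1}\u{B}_{i-1})$ by $c$), rather than being a symmetric two-sided bound on $|r_{j-1}-\ell(\u{P}_j\u{B}_j)|$.
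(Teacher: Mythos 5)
Most of your plan is sound and close in spirit to the paper's proof: the telescoping fan estimate around $\u{B}_{i-1}$ (law of cosines plus the monotonicity from Lemma~\ref{lemmastep4}(iii)) is essentially the paper's argument for the inner segments, your law-of-cosines treatment of the last segment in $\T_j$ replaces the paper's projection/$15^\circ$-angle argument and gives the same constant $5$, and your law-of-sines proof of \eqref{step52} is actually more direct than the paper's (which deduces \eqref{step52} only by combining a projection bound with the key inequality below). The genuine gap is in your justification of the pivotal one-sided bound $\ell(\u{P}_j\u{B}_j)\le\ell(\u{P}_{j-1}\u{B}_{i-1})+\ell(\u{B}_{i-1}\u{B}_j)$, which you yourself identify as the crux. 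Unwinding Step~IV inside $\T_j$, the quantity that is preserved (Part~2) or decreased (Parts~3 and~4) is the distance from the vertex shared by the two exit sides, i.e.\ $\ell(\u{A}_{j-1}\u{P}_j)\le\ell(\u{A}_{j-1}\u{P}_{j-1})$, with equality only in Part~2. In Part~2 your triangle-inequality argument is fine, but in Parts~3 and~4 the shrinking moves $\u{P}_j$ \emph{towards} $\u{A}_{j-1}$ and therefore \emph{increases} $\ell(\u{P}_j\u{B}_j)=\ell(\u{A}_{j-1}\u{B}_j)-\ell(\u{A}_{j-1}\u{P}_j)$ relative to the Part~2 rule: these cases do not give ``even sharper bounds''; they are exactly the cases in which the inequality is in danger, namely when $\ell(\u{A}_{j-1}\u{B}_{i-1})+\ell(\u{B}_{i-1}\u{B}_j)-\ell(\u{A}_{j-1}\u{B}_j)$ is small while the shrinkage is not. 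So, as written, the key inequality is unproved precisely in the generic case, and with it \eqref{step53} and the $-\ell(\u{P}_j\u{B}_j)$ term in \eqref{step51}.

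The inequality is true, but it needs the argument the paper uses (or an equivalent unwinding). Let $\u{C}_1$ be the image on the exit side $\u{A}_j\u{B}_j$ of the apex $\u{B}_{i-1}$ under the same Step~IV map that produces $\u{P}_j$ from $\u{P}_{j-1}$ (this is the point called $\u{B}'_{j-1}$ in the paper's proof). Since the map is order preserving, $\u{P}_j$ lies between $\u{A}_{j-1}$ and $\u{C}_1$, so $\ell(\u{P}_j\u{B}_j)=\ell(\u{P}_j\u{C}_1)+\ell(\u{C}_1\u{B}_j)$; the non-expansiveness built into the construction (the right-hand inequalities proved in Parts~2--4, i.e.\ the second half of Lemma~\ref{lemmastep4}(iii) applied to $\u{P}$ and the apex) gives $\ell(\u{P}_j\u{C}_1)\le\ell(\u{P}_{j-1}\u{B}_{i-1})$, and $\ell(\u{C}_1\u{B}_j)\le\ell(\u{B}_{i-1}\u{B}_j)$ holds because $\u{C}_1$ lies on the segment $\u{C}^-\u{C}^+$, all of whose points are at distance at most $\ell(\u{B}_{i-1}\u{B}_j)$ from $\u{B}_j$ by the very definition of $\u{C}^\pm$. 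With this substitution for your Parts~3--4 sentence, the rest of your proof goes through and yields \eqref{step51}, \eqref{step52} and \eqref{step53} as stated.
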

\begin{proof}
The proof will follow from simple geometric considerations thanks to Lemma~\ref{lemmastep4}. To help the reader, the situation is depicted in Figure~\ref{Fig:lemmaunits}. First of all, one has by definition
\begin{equation}\label{units10}
\ell\big(\arc{\u{P}_{i-1}\u{P}_j}\big) = \ell\big(\arc{\u{P}_{i-1}\u{P}_{j-1}}\big) + \ell\big(\u{P}_{j-1}\u{P}_j\big)\,.
\end{equation}
We claim that
\begin{equation}\label{units11}
\ell\big(\arc{\u{P}_{i-1}\u{P}_{j-1}}\big) \leq \big(1+\theta^+ \big)\,\ell\big(\u{P}_{i-1}\u{B}_{i-1}\big)-\ell\big(\u{P}_{j-1}\u{B}_{i-1}\big)\,.
\end{equation}
\begin{figure}[htbp]
\begin{center}
\input{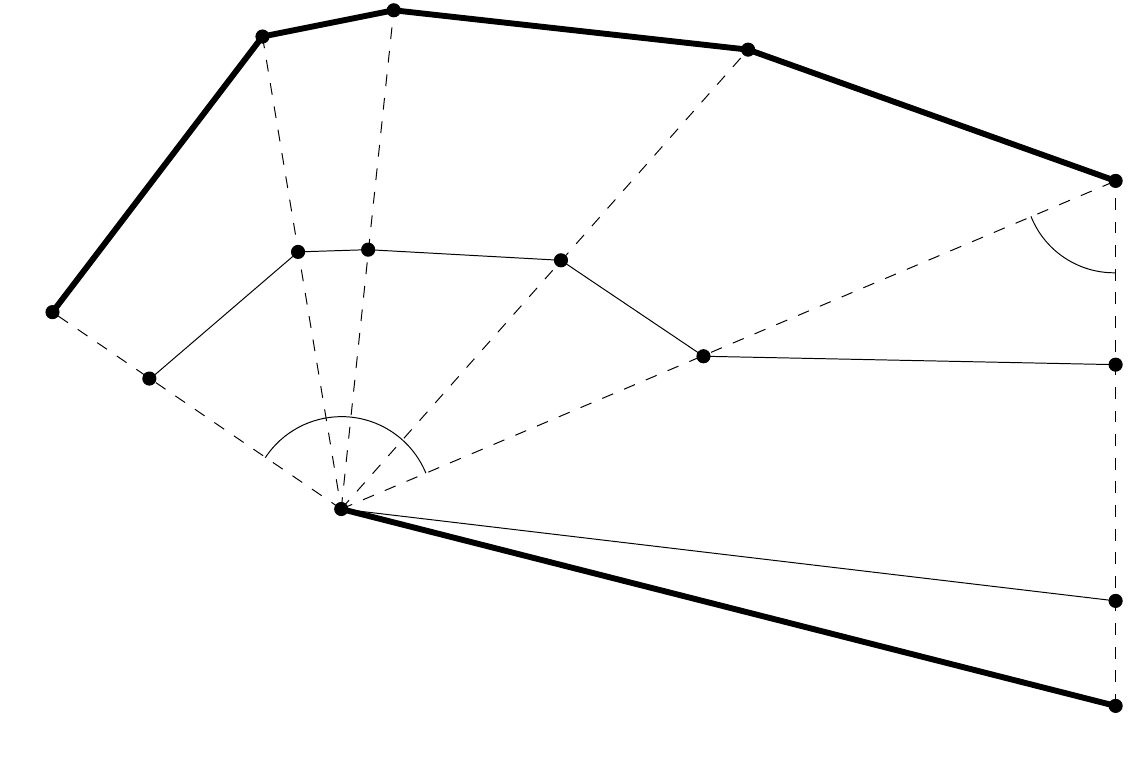_t}\vspace{-10pt}
\caption{Situation in Lemma~\ref{lemmaunits}.}\label{Fig:lemmaunits}
\end{center}
\end{figure}
In fact, if $i=j$ then $\ell\big(\arc{\u{P}_{i-1}\u{P}_{j-1}}\big)=0$ and then~(\ref{units11}) is trivially true. Otherwise, let us consider the triangle $\u{P}_{i-1}\u{B}_{i-1}\u{P}_i$. Thanks to property~(iii) in Lemma~\ref{lemmastep4}, one has
\[
\ell\big(\u{P}_i\u{B}_{i-1}\big)\leq \ell\big(\u{P}_{i-1}\u{B}_{i-1}\big)\,,
\]
and then an immediate trigonometric argument tells us that
\[\begin{split}
\ell\big(\u{P}_{i-1}\u{P}_i\big) &\leq 2 \ell\big(\u{P}_{i-1}\u{B}_{i-1}\big) \sin \bigg( \frac{\angle{\u{P}_{i-1}}{\u{B}_{i-1}}{\u{P}_i}}{2} \bigg)+ \ell\big(\u{P}_{i-1}\u{B}_{i-1}\big)-\ell\big(\u{P}_i\u{B}_{i-1}\big) \\
&\leq \ell\big(\u{P}_{i-1}\u{B}_{i-1}\big)\cdot \angle{\u{P}_{i-1}}{\u{B}_{i-1}}{\u{P}_i}\ + \ell\big(\u{P}_{i-1}\u{B}_{i-1}\big)-\ell\big(\u{P}_i\u{B}_{i-1}\big)\,.
\end{split}\]
We can repeat the same argument more in general. In fact, for any $i \leq l \leq j-1$ one has from Lemma~\ref{lemmastep4} that
\begin{equation}\label{decreasing}
\ell\big(\u{P}_l\u{B}_{i-1}\big)\leq \ell\big(\u{P}_{l-1}\u{B}_{i-1}\big) \leq \cdots \leq \ell\big(\u{P}_{i-1}\u{B}_{i-1}\big)\,,
\end{equation}
hence the previous trigonometric argument implies
\[
\ell\big(\u{P}_{l-1}\u{P}_l\big) 
\leq \ell\big(\u{P}_{i-1}\u{B}_{i-1}\big)\cdot \angle{\u{P}_{l-1}}{\u{B}_{i-1}}{\u{P}_l}\ + \ell\big(\u{P}_{l-1}\u{B}_{i-1}\big)-\ell\big(\u{P}_l\u{B}_{i-1}\big)\,.
\]
Adding this inequality for all $i\leq l \leq j-1$ one gets
\[\begin{split}
\ell\big(\arc{\u{P}_{i-1}\u{P}_{j-1}}\big) &= \sum_{l=i}^{j-1} \ell\big(\u{P}_{l-1}\u{P}_l\big)\\
&\leq \sum_{l=i}^{j-1} \ell\big(\u{P}_{i-1}\u{B}_{i-1}\big)\cdot \angle{\u{P}_{l-1}}{\u{B}_{i-1}}{\u{P}_l}\ + \ell\big(\u{P}_{l-1}\u{B}_{i-1}\big)-\ell\big(\u{P}_l\u{B}_{i-1}\big)\\
&= \theta^+ \ell\big(\u{P}_{i-1}\u{B}_{i-1}\big) +\ell\big(\u{P}_{i-1}\u{B}_{i-1}\big)-\ell\big(\u{P}_{j-1}\u{B}_{i-1}\big)\,,
\end{split}\]
which is~(\ref{units11}).\par
Let us now point our attention to the triangle $\T_j$. First of all, let us call $\u{H}$ (resp. $\u{B}_\perp$) the orthogonal projection of $\u{P}_{j-1}$ (resp. $\u{B}_{i-1}$) on the straight line passing through $\u{A}_j\u{B}_j$ (these two points are not indicated in the figure, for the sake of clarity). Since by~(i) of Lemma~\ref{lemmastep4} we have $\angle{\u{P}_{j-1}}{\u{P}_j}{\u{H}}\geq 15^\circ$, it is
\begin{equation}\label{units12}
\ell\big(\u{P}_{j-1}\u{P}_j\big) = \frac{\ell\big(\u{P}_{j-1}\u{H}\big)}{\sin\Big(\angle{\u{P}_{j-1}}{\u{P}_j}{\u{H}}\Big)}
\leq \frac{1}{\sin 15^\circ}\,\ell\big(\u{P}_{j-1}\u{H}\big) \leq 4 \,\ell\big(\u{P}_{j-1}\u{H}\big)\,,
\end{equation}
and similarly
\begin{equation}\begin{split}\label{units13}
\ell\big(\u{B}_{i-1}\u{B}_j\big) &\geq \ell\big(\u{B}_{i-1}\u{B}_\perp\big) = \ell\big(\u{A}_{j-1}\u{B}_{i-1}\big)\sin\theta^-
\geq \ell\big(\u{P}_{j-1}\u{B}_{i-1}\big) \sin\theta^-\\
&\geq \frac{2\theta^-}{\pi}\,\ell\big(\u{P}_{j-1}\u{B}_{i-1}\big)\,,
\end{split}\end{equation}
recalling that by definition of the triangles of the sectors one has $\theta^-\leq \pi/2$. Moreover, since $\u{P}_{j-1}\in \u{A}_{j-1}\u{B}_{i-1}$, then clearly $\ell\big(\u{P}_{j-1}\u{H}\big)\leq \ell\big(\u{B}_{i-1}\u{B}_\perp\big)$, so~(\ref{units12}) and~(\ref{units13}) imply
\begin{equation}\label{willimply}
\ell\big(\u{P}_{j-1}\u{P}_j\big) \leq 4\, \ell\big(\u{B}_{i-1}\u{B}_j\big)\,.
\end{equation}
Let us now call, as in the figure, $\u{B}'_{j-1}$ the first point of the piecewise affine path which starts from $\u{B}_{j-1}$ and arrives to $\u{AB}$ according to Lemma~\ref{lemmastep4} --with the notation of Lemma~\ref{lemmastep4} we should have called that point $(\u{B}_{j-1})_1$. Applying twice condition~(iii) of Lemma~\ref{lemmastep4} we get
\[
\ell\big(\u{P}_j\u{B}_j\big) = \ell\big(\u{P}_j\u{B}'_{j-1}\big) + \ell\big(\u{B}'_{j-1}\u{B}_j\big)
\leq \ell\big(\u{P}_{j-1}\u{B}_{i-1}\big) + \ell\big(\u{B}_{i-1}\u{B}_j\big)\,.
\]
This inequality allows us to conclude. Indeed, together with~(\ref{units10}), (\ref{units11}) and~(\ref{willimply}) it concludes the proof of~(\ref{step51}). Then, together with~(\ref{decreasing}), it yields~(\ref{step53}). And finally, together with~(\ref{units13}), it gives~(\ref{step52}) since
\[\begin{split}
2\ell\big(\u{B}_{i-1}\u{B}_j\big) 
&\geq \frac{2\theta^-}{\pi}\, \ell\big(\u{B}_{i-1}\u{B}_j\big) + \ell\big(\u{B}_{i-1}\u{B}_j\big)\\
&\geq \frac{2\theta^-}{\pi}\, \Big(\ell\big(\u{P}_j\u{B}_j\big) - \ell\big(\u{P}_{j-1}\u{B}_{j-1}\big) \Big)+ 
\frac{2\theta^-}{\pi}\,\ell\big(\u{P}_{j-1}\u{B}_{j-1}\big)
= \frac{2\theta^-}{\pi}\, \ell\big(\u{P}_j\u{B}_j\big) \,.
\end{split}\]
\end{proof}
After this result, we can stop thinking about triangles, and we can start working only with units. In fact, notice that any unit of triangles, say $\U=\big(\T_i,\, \T_{i+1},\, \dots \,,\,\T_j\big)$, starts with the exit side of $\T_{i-1}$ and finishes with the exit side of $\T_j$ and that the estimates~(\ref{step51}), (\ref{step52}) and~(\ref{step53}) are already written only in terms of points of those sides. Let us then number the units as $\U_1,\,\U_2,\, \dots \,,\, \U_M$, with $M\leq N$, and let us define $i_l$ and $j_l$, for $1\leq l \leq M$, in such a way that $\U_l = \big(\T_{i_l},\, \T_{i_l+1},\, \dots \,,\,\T_{j_l}\big)$. Notice that $i_1=1$, $j_M=N$, and $j_l+1=i_{l+1}$ for each $1\leq l < M$. Let us give the following definitions,
\begin{align}\label{notunits}
\u{Q}_l := \u{P}_{j_l} \,, && \u{C}_l := \u{A}_{j_l}\,, && \u{D}_l:= \u{B}_{j_l}\,, && \u{Q}_0:=\u{P}_0=\u{P}\,, && \u{D}_0:= \u{B}_0 \,,
\end{align}
where the last two definitions are done to be consistent. Call also $\theta^\pm_l$ the angles $\theta^\pm$ related to the unit $\U_l$. Hence, the claim of Lemma~\ref{lemmaunits} can be rewritten as
\begin{align}
\ell\big(\arc{\u{Q}_{l-1}\u{Q}_l}\big)&\leq \big(1+\theta^+_l\big)\,\ell\big(\u{Q}_{l-1}\u{D}_{l-1}\big)-\ell\big(\u{Q}_l\u{D}_l\big)+5\, \ell\big(\u{D}_{l-1}\u{D}_l\big)\,,\tag{\ref{step51}'}\label{step51'} \\
\ell\big(\u{D}_{l-1}\u{D}_l\big) &\geq \frac{\theta^-_l}{\pi}\,\ell\big(\u{Q}_l\u{D}_l\big)\,,\tag{\ref{step52}'}\label{step52'}\\
\ell\big(\u{Q}_l\u{D}_l\big) &\leq \ell\big(\u{Q}_{l-1}\u{D}_{l-1}\big) + \ell\big(\u{D}_{l-1}\u{D}_l\big)\,.\tag{\ref{step53}'}\label{step53'}
\end{align}
Before passing to the definition of ``systems'' of units, and in order to help understanding its meaning, it can be useful to give a proof of Lemma~\ref{lemmastep5} in a very peculiar case.
\begin{lemma}\label{intrsystem}
The claim of Lemma~\ref{lemmastep5} holds true if
\begin{align}
\ell\big(\arc{\u{D}_0\u{D}_{M-1}}\big)&\leq \frac{\ell\big(\u{Q}_0\u{D}_0\big)} 4 \,,\label{ass1}\\
\ell\big(\u{Q}_l\u{D}_l\big)&\geq \frac{\ell\big(\u{Q}_0\u{D}_0\big)} 2 \qquad \forall\,1\leq l\leq M-1 \,.\label{ass2}
\end{align}
\end{lemma}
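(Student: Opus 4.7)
The plan is to prove, under~(\ref{ass1}), (\ref{ass2}) and the symmetry reduction $\lcurve{PB}\leq\lcurve{AP}$, the sharper inequality
\[
\ell\big(\arc{\u{Q}_0\u{Q}_M}\big)\leq 113\bigg(a_0+\sum_{l=1}^M d_l\bigg),
\]
where $a_l=\ell(\u{Q}_l\u{D}_l)$ and $d_l=\ell(\u{D}_{l-1}\u{D}_l)$. Since $a_0=\ell(\u{P}\u{B}_0)\leq\ell(\arc{\u{PB}_0})$ and $\sum_l d_l\leq\ell(\arc{\u{B}_0\u{B}})$ on the boundary arc $\arc{\u{PB}}$, one has $a_0+\sum_l d_l\leq\lcurve{PB}$, and so this sharper bound implies the claim of Lemma~\ref{lemmastep5}.

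First I would sum~(\ref{step51'}) over $l=1,\dots,M$ and discard the non-positive $-a_M$ term to obtain
\[
\ell\big(\arc{\u{Q}_0\u{Q}_M}\big)\leq a_0+5\sum_{l=1}^M d_l+\sum_{l=1}^M\theta^+_l\,a_{l-1}.
\]
Iterating~(\ref{step53'}) together with~(\ref{ass1}) yields $a_{l-1}\leq\tfrac{5}{4}a_0$ for every $l\leq M$, so the whole problem reduces to bounding $\sum_{l=1}^M\theta^+_l$ by an absolute constant. For this, (\ref{ass2}) combined with property~(iii) of Lemma~\ref{lemmastep4} applied on the exit sides through $\u{Q}_{l-1}$ and $\u{P}_{j_l-1}$ forces both rays at $\u{D}_{l-1}$ defining $\theta^+_l=\angle\u{C}_{l-1}\u{D}_{l-1}\u{C}_l$ to have length at least $a_0/2$, while (\ref{ass1}) keeps every basepoint $\u{D}_{l-1}$ within distance $a_0/4$ of the fixed point $\u{D}_0$.

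The hard part is this uniform bound on $\sum\theta^+_l$. My plan is to compare each $\theta^+_l$ with the fixed-basepoint auxiliary angle $\widetilde\theta_l:=\angle\u{C}_{l-1}\u{D}_0\u{C}_l$. Since the $\u{C}_l$'s move monotonically along $\arc{\u{PA}}$, $\sum_l\widetilde\theta_l$ telescopes to the total angular sweep of the $\u{C}$-sequence as seen from $\u{D}_0$, and is therefore bounded by $2\pi$. Writing $\theta^+_l-\widetilde\theta_l=g_l(\u{C}_l)-g_l(\u{C}_{l-1})$ with the parallax function $g_l(\u{X}):=\arg(\u{X}-\u{D}_{l-1})-\arg(\u{X}-\u{D}_0)$ and applying a discrete integration by parts converts $\sum_l(\theta^+_l-\widetilde\theta_l)$ into a boundary term (trivially controlled since $g_1\equiv 0$ and $|g_M|\leq\pi$) plus the sum $\sum_{l<M}[g_l(\u{C}_l)-g_{l+1}(\u{C}_l)]$. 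Each summand of the latter equals the parallax angle subtended by the segment $\u{D}_{l-1}\u{D}_l$ from the vantage point $\u{C}_l$, and is bounded by $C\,d_l/a_0$ using the lower bound $\ell(\u{C}_l\u{D}_{l-1})\geq a_0/2$; hence this residual sum is at most $C/4$ by~(\ref{ass1}). A naive pointwise estimate $|\theta^+_l-\widetilde\theta_l|\leq C\,\ell(\u{D}_0\u{D}_{l-1})/a_0$ would not be summable in $l$, since $\ell(\u{D}_0\u{D}_{l-1})$ need not decay; the Abel rearrangement is the key that trades this non-summable pointwise error for a summable first-difference expression. Combining the resulting absolute bound on $\sum_l\theta^+_l$ with the earlier estimate on the $a_{l-1}$ and $d_l$ contributions, and choosing constants carefully, yields the target inequality with factor~$113$.
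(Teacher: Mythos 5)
Your bookkeeping is fine and largely parallels the paper: summing (\ref{step51'}) and telescoping, deducing $\ell(\u{Q}_l\u{D}_l)\leq\frac54\,\ell(\u{Q}_0\u{D}_0)$ from (\ref{step53'}) and (\ref{ass1}), and noting that the Abel-summation residuals $g_l(\u{C}_l)-g_{l+1}(\u{C}_l)$ are exactly (up to sign) the angles $\theta^-_l=\angle{\u{D}_{l-1}}{\u{C}_l}{\u{D}_l}$, which are summable because $\ell(\u{C}_l\u{D}_{l-1})\geq\ell(\u{Q}_0\u{D}_0)/2$ and the $\u{D}_l$'s stay within $\ell(\u{Q}_0\u{D}_0)/4$ of $\u{D}_0$ --- this is precisely the role played by (\ref{step52'}), (\ref{ass2}) and (\ref{ass1}) in the paper. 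The gap is the one claim on which everything rests: that $\sum_l\widetilde\theta_l\leq 2\pi$ because the $\u{C}_l$ ``move monotonically along $\arc{\u{PA}}$, hence the angles telescope.'' Monotone progression along the boundary arc does \emph{not} imply monotone angular position about the fixed viewpoint $\u{D}_0$. The quantities $\widetilde\theta_l=\angle{\u{C}_{l-1}}{\u{D}_0}{\u{C}_l}$ are unsigned, so their sum is the total angular \emph{variation} of the sequence $(\u{C}_l)$ seen from $\u{D}_0$, and for a wiggly or spiral-like $\arc{\u{AB}}$ --- exactly the dangerous case this lemma is meant to handle --- nothing in ``monotone along the arc'' prevents the angular position from sweeping back and forth many times. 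Even if the net signed sweep were bounded, that would not control the sum of the nonnegative $\theta^+_l$ without monotonicity, which is what you are implicitly assuming.

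What actually forbids back-and-forth sweeps is not the ordering of the $\u{C}_l$ along $\arc{\u{PA}}$ but the disjointness of the triangles $\T_i$: under (\ref{ass1}) and (\ref{ass2}) the exit sides are pairwise non-crossing segments of length at least $\ell(\u{Q}_0\u{D}_0)/2$ whose $\u{B}$-endpoints all lie in the ball of radius $\ell(\u{Q}_0\u{D}_0)/4$ around $\u{D}_0$, so the fans swept during distinct units are essentially disjoint angular sectors around this small hub and their total opening is bounded. This is the content of the paper's key estimate (\ref{boundangles}), $\sum_l\theta^+_l-\sum_l\theta^-_l\leq\frac{13}{6}\pi$, which combined with (\ref{step52'}) and (\ref{ass2}) does what your telescoping claim was supposed to do. Your proposal never invokes this non-crossing property, and the telescoping step is where the whole difficulty of the lemma sits; once it is repaired (essentially by reproducing that geometric argument), the rest of your computation, including your constant, which lands well below $113$, goes through.
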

\begin{proof}
First of all notice that, by the two assumptions and an easy geometrical argument (recalling that all the triangles $\T_i$ are disjoint, hence in particular the segments $\u Q_l\u D_l$ cannot intersect), one finds that
\begin{equation}\label{boundangles}
\sum_{l=1}^M \theta_l^+ - \sum_{l=1}^{M-1}\theta_l^- \leq \frac{13}{6}\,\pi\,.
\end{equation}
Moreover, by~(\ref{step53'}) and~(\ref{ass1}), one gets
\begin{equation}\label{segmcorti}
\ell\big(\u{Q}_l\u{D}_l\big) \leq \frac 54\, \ell\big(\u{Q}_0\u{D}_0\big)\qquad \forall\, 0\leq l \leq M-1\,.
\end{equation}
We can now evaluate, using~(\ref{step51'}), (\ref{segmcorti}), (\ref{boundangles}), (\ref{ass2}) and~(\ref{step52'}),
\begin{align}
\ell\big(\arc{\u{Q}_0\u{Q}_M}\big)
&=\sum_{l=1}^{M} \ell\big(\arc{\u{Q}_{l-1}\u{Q}_l}\big)
\leq\sum_{l=1}^{M}\big(1+\theta^+_l\big)\,\ell\big(\u{Q}_{l-1}\u{D}_{l-1}\big)-\ell\big(\u{Q}_l\u{D}_l\big)+5\,\ell\big(\u{D}_{l-1}\u{D}_l\big)\nonumber\\
&\leq \frac 54\, \ell\big(\u{Q}_0\u{D}_0\big) \sum_{l=1}^{M} \theta^+_l + \ell\big(\u{Q}_0\u{D}_0\big)-\ell\big(\u{Q}_M\u{D}_M\big)+5\ell\big(\arc{\u{D}_0\u{D}_M}\big)\nonumber\\
&\leq \ell\big(\u{Q}_0\u{D}_0\big) \bigg(1+\frac{65}{24}\,\pi\bigg) + \frac 54\, \ell\big(\u{Q}_0\u{D}_0\big) \sum_{l=1}^{M-1} \theta^-_l +5\ell\big(\arc{\u{D}_0\u{D}_M}\big)\label{gensch}\\
&\leq 10\,\ell\big(\u{Q}_0\u{D}_0\big)+ \frac 52\, \sum_{l=1}^{M-1} \theta^-_l\,\ell \big(\u{Q}_l\u{D}_l\big) +5\ell\big(\arc{\u{D}_0\u{D}_M}\big)\nonumber\\
&\leq 10\,\ell\big(\u{Q}_0\u{D}_0\big)+ \frac 52\,\pi\,  \sum_{l=1}^{M-1} \ell\big(\u{D}_{l-1}\u{D}_l\big) +5\ell\big(\arc{\u{D}_0\u{D}_M}\big)\nonumber\\
&\leq 10\,\ell\big(\u{Q}_0\u{D}_0\big)+ \big(5+\frac 52\,\pi\big)\ell\big(\arc{\u{D}_0\u{D}_M}\big)
\leq 10\,\ell\big(\u{Q}_0\u{D}_0\big)+ 13 \ell\big(\arc{\u{D}_0\u{D}_M}\big)\,.\nonumber
\end{align}
Finally, recall that
\[
\lcurve{PB}=\ell\big(\u{PB}_0\big)+ \ell\big(\arc{\u{B}_0\u{B}_N}\big)
= \ell\big(\u{Q}_0\u{D}_0\big)+ \ell\big(\arc{\u{D}_0\u{D}_M}\big)\,,
\]
hence from~(\ref{gensch}) we directly get $\ell\big( \arc{\u{PP}_N}\big)= \ell\big(\arc{\u{Q}_0\u{Q}_M}\big) \leq 13 \lcurve{PB}$. Since it is admissible to assume, by symmetry, that $\lcurve{PB}\leq \lcurve{AP}$, we conclude the proof of Lemma~\ref{lemmastep5} under the assumptions~(\ref{ass1}) and~(\ref{ass2}).
\end{proof}

It is to be noticed carefully that the key point in the above proof is the validity of~(\ref{boundangles}), which is a simple consequence of~(\ref{ass1}) and~(\ref{ass2}), but which one cannot hope to have in general. Basically, (\ref{boundangles}) fails whenever the sector $\DoubleS(\u{AB})$ has a spiral shape, and in fact~(\ref{ass1}) and~(\ref{ass2}) precisely prevent the sector to be an enlarging and a shrinking spiral respectively.\par
Since the assumptions~(\ref{ass1}) and~(\ref{ass2}) do not hold, in general, through all the units, we will group the units in ``systems'' in which they are valid.
\begin{definition}\label{defsystems}
Let $k_0=0$. We define recursively the increasing finite sequence $\{k_1,\, \cdots\,,\,  k_W\}$ as follows. For each $j\geq 0$, if $k_j=M$ then we conclude the construction (and thus $W=j$), while otherwise we define $k_j<k_{j+1}\leq M$ to be the biggest number such that
\begin{align}
\ell\big(\arc{\u{D}_{k_j}\u{D}_{k_{j+1}-1}}\big)&\leq \frac{\ell\big(\u{Q}_{k_j}\u{D}_{k_j}\big)} 4 \,,\tag{\ref{ass1}'}\label{ass1'}\\
\ell\big(\u{Q}_l\u{D}_l\big)&\geq \frac{\ell\big(\u{Q}_{k_j}\u{D}_{k_j}\big)} 2 \qquad \forall\,k_j < l < k_{j+1} \,.\tag{\ref{ass2}'}\label{ass2'}
\end{align}
Notice that the sequence is well-defined, since if $k_j<M$ then the assumptions~(\ref{ass1'}) and~(\ref{ass2'}) trivially hold with $k_{j+1}=k_j+1$. Hence, $W\leq M\leq N$. We define then \emph{system of units} each collection of units of the form $\DoubleSS_j=\big(\U_{k_{j-1}+1},\, \U_{k_{j-1}+2},\, \dots \,,\,\U_{k_j}\big)$, for $1\leq j \leq W$.
\end{definition}

Thanks to this definition, we can rephrase the claim of Lemma~\ref{intrsystem} as follows: ``the claim of Lemma~\ref{lemmastep5} holds true if there is only one system of units''. But in fact, the argument of Lemma~\ref{intrsystem} still gives some useful information for each different system, as we will see in a moment with Lemma~\ref{lemmasystem}. Before doing so, in order to avoid too many indices, it is convenient to introduce some new notation in order to work only with systems instead of with units. Hence, in analogy with~(\ref{notunits}), we set
\begin{align}\label{notsystems}
\u{R}_j := \u{Q}_{k_j} \,, && \u{E}_j := \u{C}_{k_j}\,, && \u{F}_j:= \u{D}_{k_j}\,, && \u{R}_0:=\u{Q}_0=\u{P}\,, && \u{F}_0:= \u{D}_0 = \u{B}_0\,.
\end{align}
We can now observe an estimate for the systems which comes directly from the argument of Lemma~\ref{intrsystem}.
\begin{lemma}\label{lemmasystem}
Let $\DoubleSS_j$ be a system of units. Then one has
\begin{equation}\label{stimasystem}
\ell\big(\arc{\u{R}_{j-1}\u{R}_j}\big) \leq 13 \,\ell\big(\arc{\u{F}_{j-1}\u{F}_j}\big) + 10\,\ell\big(\u{R}_{j-1}\u{F}_{j-1}\big)\,,
\end{equation}
and moreover
\begin{equation}\label{stepsystem}
\ell\big(\u{R}_j\u{F}_j\big) \leq \ell\big(\u{R}_{j-1}\u{F}_{j-1}\big) + \ell\big(\arc{\u{F}_{j-1}\u{F}_j}\big)\,.
\end{equation}
\end{lemma}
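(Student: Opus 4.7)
The strategy is to apply the argument of Lemma~\ref{intrsystem} to each single system $\DoubleSS_j$ in isolation, rather than to the whole sequence of units. Indeed, Definition~\ref{defsystems} is tailored precisely so that the localized versions of the hypotheses of Lemma~\ref{intrsystem} hold within $\DoubleSS_j$: reading~(\ref{ass1'}) and~(\ref{ass2'}) at index $j-1$, we have $\ell(\arc{\u{F}_{j-1}\u{D}_{k_j-1}}) \leq \ell(\u{R}_{j-1}\u{F}_{j-1})/4$ and $\ell(\u{Q}_l\u{D}_l) \geq \ell(\u{R}_{j-1}\u{F}_{j-1})/2$ for every $k_{j-1} < l < k_j$. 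After relabelling the $M_j := k_j - k_{j-1}$ units of $\DoubleSS_j$ as $\widetilde\U_1, \dots, \widetilde\U_{M_j}$ and their endpoints as $\widetilde{\u{Q}}_l := \u{Q}_{k_{j-1}+l}$, $\widetilde{\u{D}}_l := \u{D}_{k_{j-1}+l}$, these are exactly the hypotheses~(\ref{ass1}) and~(\ref{ass2}) used in Lemma~\ref{intrsystem}, with $\u{R}_{j-1}, \u{F}_{j-1}, \u{R}_j, \u{F}_j$ playing the roles of $\u{Q}_0, \u{D}_0, \u{Q}_M, \u{D}_M$.

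Granted the localized hypotheses, to obtain~(\ref{stimasystem}) I would reproduce the chain of inequalities~(\ref{gensch}) with all sums restricted to $l = 1, \dots, M_j$. None of the three ingredients involved --- the per-unit inequalities~(\ref{step51'})--(\ref{step53'}), the segment bound (the local analogue of~(\ref{segmcorti})), and the cumulative angle bound (the local analogue of~(\ref{boundangles})) --- uses information at the terminal index, so this restriction is harmless. The chain then terminates in $\ell(\arc{\widetilde{\u{Q}}_0 \widetilde{\u{Q}}_{M_j}}) \leq 10\, \ell(\widetilde{\u{Q}}_0 \widetilde{\u{D}}_0) + 13\, \ell(\arc{\widetilde{\u{D}}_0 \widetilde{\u{D}}_{M_j}})$, which, translated back to the $\u{R}, \u{F}$ notation, is exactly~(\ref{stimasystem}).

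For~(\ref{stepsystem}), neither~(\ref{ass1'}) nor~(\ref{ass2'}) is needed: I would merely iterate~(\ref{step53'}) through the units of $\DoubleSS_j$. Summing $\ell(\u{Q}_l\u{D}_l) - \ell(\u{Q}_{l-1}\u{D}_{l-1}) \leq \ell(\u{D}_{l-1}\u{D}_l)$ telescopically from $l = k_{j-1}+1$ to $l = k_j$ yields
\[
\ell(\u{R}_j\u{F}_j) - \ell(\u{R}_{j-1}\u{F}_{j-1}) \leq \sum_{l=k_{j-1}+1}^{k_j} \ell(\u{D}_{l-1}\u{D}_l) = \ell(\arc{\u{F}_{j-1}\u{F}_j}),
\]
which is~(\ref{stepsystem}). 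The only point requiring attention is the slight asymmetry in the system-defining conditions~(\ref{ass1'})--(\ref{ass2'}): both stop one step short of $k_j$, so neither the final jump $\u{D}_{k_j-1}\u{D}_{k_j}$ nor the length $\ell(\u{R}_j\u{F}_j)$ is directly controlled by them. This is the main (mild) obstacle, but it does not bite: the derivation of both the segment bound~(\ref{segmcorti}) and the angular bound~(\ref{boundangles}) in Lemma~\ref{intrsystem} never invokes the hypotheses at the top index $M$, and the extra term $5\,\ell(\u{D}_{k_j-1}\u{D}_{k_j})$ produced by the last application of~(\ref{step51'}) is absorbed into the global $5\, \ell(\arc{\u{F}_{j-1}\u{F}_j})$ contribution, which accounts for the coefficient $13$ in~(\ref{stimasystem}).
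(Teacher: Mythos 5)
Your proposal is correct and coincides with the paper's own proof: the paper also obtains~(\ref{stimasystem}) by repeating the proof of Lemma~\ref{intrsystem} verbatim with $0$ replaced by $k_{j-1}$ and $M$ by $k_j$ (the system-defining conditions~(\ref{ass1'})--(\ref{ass2'}) supplying exactly the localized hypotheses), and~(\ref{stepsystem}) by summing~(\ref{step53'}) telescopically over $k_{j-1}+1\leq l\leq k_j$. Your remark that the hypotheses stop one index short of $k_j$ is harmless for the same reason as in the original lemma, whose assumptions likewise stop at $M-1$.
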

\begin{proof}
First of all, repeat {\it verbatim}, substituting $0$ with $k_{j-1}$ and $M$ with $k_j$, the proof of Lemma~\ref{intrsystem} until the estimate~(\ref{gensch}), which then reads as
\[
\ell\big(\arc{\u{Q}_{k_{j-1}}\u{Q}_{k_j}}\big) \leq 10\,\ell\big(\u{Q}_{k_{j-1}}\u{D}_{k_{j-1}}\big) + 13 \,\ell\big(\arc{\u{D}_{k_{j-1}}\u{D}_{k_j}}\big)\,.
\]
This estimate is exactly~(\ref{stimasystem}), rewritten with the new notations~(\ref{notsystems}).
On the other hand, concerning~(\ref{stepsystem}), it is enough to add the inequality~(\ref{step53'}) with all $k_{j-1}+1 \leq l \leq k_j$, thus obtaining
\[
\sum_{l=k_{j-1}+1}^{k_j} \ell\big(\u{Q}_l\u{D}_l\big) \leq \sum_{l=k_{j-1}+1}^{k_j} \ell\big(\u{Q}_{l-1}\u{D}_{l-1}\big) +\sum_{l=k_{j-1}+1}^{k_j}  \ell\big(\u{D}_{l-1}\u{D}_l\big)\,,
\]
which is equivalent to
\[
\ell\big(\u{Q}_{k_j}\u{D}_{k_j}\big)  \leq \ell\big(\u{Q}_{k_{j-1}}\u{D}_{k_{j-1}}\big) + \ell\big(\arc{\u{D}_{k_{j-1}} \u{D}_{k_j} }\big)\,.
\]
This estimate corresponds to~(\ref{stepsystem}) when using the new notations.
\end{proof}
Notice that, by adding~(\ref{stimasystem}) for all $1\leq j \leq W$, one obtains
\[
\ell\big( \arc{\u{PP}_N}\big)= \ell\big(\arc{\u{Q}_0\u{Q}_M}\big) = \ell\big(\arc{\u{R}_0\u{R}_W}\big)\leq 13\,\ell\big(\arc{\u{F}_0\u{F}_W}\big) + 10 \sum_{j=0}^{W-1} \ell\big(\u{R}_j\u{F}_j\big) \,,
\]
and since $\arc{\u{F}_0\u{F}_W}=\arc{\u{B}_0\u{B}_N}\subseteq \arc{\u{PB}}$, to conclude Lemma~\ref{lemmastep5} one needs to estimate the last sum.\par
Having done this remark, we can now introduce our last category, namely the ``blocks'' of systems. To do so, notice that by Definition~\ref{defsystems} of systems of units and using the new notations~(\ref{notsystems}), for any $1\leq j < W$ one must have, by maximality of $k_j$,
\begin{align}\label{conclsyst}
\hbox{either} \qquad \ell\big(\arc{\u{F}_{j-1}\u{F}_j}\big)> \frac{\ell\big(\u{R}_{j-1}\u{F}_{j-1}\big)} 4  \,, &&
\hbox{or} \qquad \ell\big(\u{R}_j\u{F}_j\big)< \frac{\ell\big(\u{R}_{j-1}\u{F}_{j-1}\big)} 2\,.
\end{align}
We can then give our definition.
\begin{definition}
Let $p_0=0$. We define recursively the increasing sequence $\{p_1,\, \cdots\,,\,  p_H\}$ as follows. For each $i\geq 0$, if $p_i=W$ then we conclude the construction (and thus $H=i$), while otherwise we define $p_i<p_{i+1}\leq W$ to be the biggest number such that
\[
\ell\big(\u{R}_j\u{F}_j\big)< \frac{\ell\big(\u{R}_{j-1}\u{F}_{j-1}\big)} 2 \qquad \forall\, p_i< j <p_{i+1}\,.
\]
Notice again that this strictly increasing sequence is well-defined since the inequality is emptily true for $p_{i+1}=p_i+1$. We then define \emph{block of systems} each collection $\BB_i=\big(\DoubleSS_{p_{i-1}+1},\, \DoubleSS_{p_{i-1}+2},\, \dots \,,\,\DoubleSS_{p_i}\big)$, for $1\leq i \leq H$.
\end{definition}
We can now show the important properties of the blocks of systems.
\begin{lemma}
For any $0\leq i < H$, the following estimate concerning the block $\BB_i$ holds true,
\begin{equation}\label{blocks1}
\ell\big(\arc{\u{R}_{p_i}\u{R}_{p_{i+1}}}\big) \leq 13\, \ell\big(\arc{\u{F}_{p_i} \u{F}_{p_{i+1}}}\big) + 20\, \ell\big(\u{R}_{p_i} \u{F}_{p_i}\big)\,.
\end{equation}
Moreover, for any $0\leq i<H-1$, one also has
\begin{equation}\label{blocks2}
\ell\big(\u{R}_{p_{i+1}}\u{F}_{p_{i+1}}\big)\leq 5\,\ell\big(\arc{\u{F}_{p_i}\u{F}_{p_{i+1}}}\big)\,.
\end{equation}
\end{lemma}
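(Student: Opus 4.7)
The plan is to prove the two estimates separately, by summing the per-system bounds from Lemma~\ref{lemmasystem} and exploiting the defining halving property of blocks. Throughout, the key structural fact I will use is that, by the very definition of blocks, for every $p_i<j<p_{i+1}$ one has $\ell(\u{R}_j\u{F}_j)<\ell(\u{R}_{j-1}\u{F}_{j-1})/2$, so by induction
\[
\ell(\u{R}_j\u{F}_j)\leq \frac{\ell(\u{R}_{p_i}\u{F}_{p_i})}{2^{j-p_i}}\qquad\forall\,p_i\leq j\leq p_{i+1}-1\,.
\]

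For the estimate~(\ref{blocks1}), I would simply sum inequality~(\ref{stimasystem}) of Lemma~\ref{lemmasystem} over $j=p_i+1,\dots,p_{i+1}$. The piecewise affine paths $\arc{\u{R}_{j-1}\u{R}_j}$ concatenate to $\arc{\u{R}_{p_i}\u{R}_{p_{i+1}}}$, and the paths $\arc{\u{F}_{j-1}\u{F}_j}$ concatenate (by definition of the $\u{F}_j$ as points on $\partial\Delta$ lying in order along $\arc{\u{AB}}$) to $\arc{\u{F}_{p_i}\u{F}_{p_{i+1}}}$. The remaining error term is
\[
10\sum_{j=p_i+1}^{p_{i+1}}\ell(\u{R}_{j-1}\u{F}_{j-1})
=10\sum_{m=p_i}^{p_{i+1}-1}\ell(\u{R}_m\u{F}_m)
\leq 10\,\ell(\u{R}_{p_i}\u{F}_{p_i})\sum_{m\geq 0}2^{-m}
=20\,\ell(\u{R}_{p_i}\u{F}_{p_i})\,,
\]
which is exactly the claim. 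So~(\ref{blocks1}) is a straightforward summation plus a geometric series.

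The slightly more delicate point is~(\ref{blocks2}). The hypothesis $i<H-1$ forces $p_{i+1}<W$, so that (\ref{conclsyst}) applies at $j=p_{i+1}$. By the maximality of $p_{i+1}$ in the definition of a block, if we tried to extend the block by one more system (i.e., replace $p_{i+1}$ with $p_{i+1}+1$) the halving condition must fail somewhere, and since it holds for $p_i<j<p_{i+1}$ by definition the only possible place of failure is precisely at $j=p_{i+1}$. Hence $\ell(\u{R}_{p_{i+1}}\u{F}_{p_{i+1}})\geq\ell(\u{R}_{p_{i+1}-1}\u{F}_{p_{i+1}-1})/2$; the dichotomy~(\ref{conclsyst}) at $j=p_{i+1}$ then forces the other alternative, namely
\[
\ell\big(\arc{\u{F}_{p_{i+1}-1}\u{F}_{p_{i+1}}}\big)>\frac{\ell(\u{R}_{p_{i+1}-1}\u{F}_{p_{i+1}-1})}{4}\,.
\]

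Applying~(\ref{stepsystem}) at $j=p_{i+1}$ and using the above estimate gives
\[
\ell(\u{R}_{p_{i+1}}\u{F}_{p_{i+1}})\leq\ell(\u{R}_{p_{i+1}-1}\u{F}_{p_{i+1}-1})+\ell\big(\arc{\u{F}_{p_{i+1}-1}\u{F}_{p_{i+1}}}\big)\leq 5\,\ell\big(\arc{\u{F}_{p_{i+1}-1}\u{F}_{p_{i+1}}}\big)\,,
\]
and since $\arc{\u{F}_{p_{i+1}-1}\u{F}_{p_{i+1}}}\subseteq\arc{\u{F}_{p_i}\u{F}_{p_{i+1}}}$ (as $p_{i+1}-1\geq p_i$, these are sub-paths of the same piece of $\partial\Delta$), we conclude~(\ref{blocks2}). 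I expect the only conceptual subtlety to be correctly unwinding the maximality in Definition of blocks in order to invoke the dichotomy~(\ref{conclsyst}) at the right index; the rest is telescoping and a geometric series.
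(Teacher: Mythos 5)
Your proof is correct and follows essentially the same route as the paper: summing~(\ref{stimasystem}) over the systems of the block and bounding the error term by a geometric series via the halving property (the paper states this bound directly with the factor $20$), and for~(\ref{blocks2}) using the maximality of $p_{i+1}$ together with the dichotomy~(\ref{conclsyst}) and~(\ref{stepsystem}) at $j=p_{i+1}$. No gaps; your unwinding of the maximality to locate the failure of the halving condition exactly at $j=p_{i+1}$ matches the paper's argument.
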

\begin{proof}
It is enough to add~(\ref{stimasystem}) for $p_i+1\leq j \leq p_{i+1}$ to obtain
\[\begin{split}
\ell\big(\arc{\u{R}_{p_i}\u{R}_{p_{i+1}}}\big)&=\sum_{j=p_i+1}^{p_{i+1}} \ell\big(\arc{\u{R}_{j-1}\u{R}_j}\big) \leq 13 \sum_{j=p_i+1}^{p_{i+1}} \ell\big(\arc{\u{F}_{j-1}\u{F}_j}\big) + 10 \sum_{j=p_i+1}^{p_{i+1}}\,\ell\big(\u{R}_{j-1}\u{F}_{j-1}\big)\\
&=  13\, \ell\big(\arc{\u{F}_{p_i}\u{F}_{p_{i+1}}}\big) + 10\, \sum_{j=p_i}^{p_{i+1}-1}\,\ell\big(\u{R}_j\u{F}_j\big)
<  13 \,\ell\big(\arc{\u{F}_{p_i}\u{F}_{p_{i+1}}}\big) + 20 \,\ell\big(\u{R}_{p_i}\u{F}_{p_i}\big)\,,
\end{split}\]
thus~(\ref{blocks1}) is already obtained.\par
Consider now~(\ref{blocks2}). Recalling the definition of the blocks, the maximality of $p_{i+1}$ tells us that either $p_{i+1}=W$ (and this is excluded by $i<H-1$) or
\[
\ell\big(\u{R}_{p_{i+1}}\u{F}_{p_{i+1}}\big)\geq  \frac{\ell\big(\u{R}_{p_{i+1}-1}\u{F}_{p_{i+1}-1}\big)} 2\,.
\]
Hence, keeping in mind~(\ref{conclsyst}) with $j=p_{i+1}$, we also have that
\[
\ell\big(\arc{\u{F}_{p_{i+1}-1}\u{F}_{p_{i+1}}}\big)> \frac{\ell\big(\u{R}_{p_{i+1}-1}\u{F}_{p_{i+1}-1}\big)} 4\,.
\]
Let us apply now~(\ref{stepsystem}) with $j=p_{i+1}$, to get
\[
\ell\big(\u{R}_{p_{i+1}}\u{F}_{p_{i+1}}\big) \leq \ell\big(\u{R}_{p_{i+1}-1}\u{F}_{p_{i+1}-1}\big) + \ell\big(\arc{\u{F}_{p_{i+1}-1}\u{F}_{p_{i+1}}}\big)
\leq 5\,\ell\big(\arc{\u{F}_{p_{i+1}-1}\u{F}_{p_{i+1}}}\big)
\leq 5\,\ell\big(\arc{\u{F}_{p_i}\u{F}_{p_{i+1}}}\big)\,,
\]
and so also~(\ref{blocks2}) is proved.
\end{proof}

We finally end this step with the proof of Lemma~\ref{lemmastep5}.

\proofof{Lemma~\ref{lemmastep5}}
By symmetry, we can assume that $\min \Big\{ \lcurve{AP},\, \lcurve{PB}\Big\}=\lcurve{PB}$. Using~(\ref{blocks1}) and~(\ref{blocks2}), we then estimate
\begin{align*}
\ell\big(\arc{\u{P}_0\u{P}_N}\big)
&=\ell\big(\arc{\u{Q}_0\u{Q}_M}\big)
=\ell\big(\arc{\u{R}_0\u{R}_W}\big)
=\sum_{i=0}^{H-1} \ell\big(\arc{\u{R}_{p_i}\u{R}_{p_{i+1}}}\big)\\
&\leq \sum_{i=0}^{H-1} 13\, \ell\big(\arc{\u{F}_{p_i} \u{F}_{p_{i+1}}}\big) + \sum_{i=0}^{H-1} 20\, \ell\big(\u{R}_{p_i} \u{F}_{p_i}\big)\\
&= 13\, \sum_{i=0}^{H-1} \ell\big(\arc{\u{F}_{p_i} \u{F}_{p_{i+1}}}\big)+20\, \ell\big(\u{R}_0 \u{F}_0\big) + 20\, \sum_{i=0}^{H-2} \ell\big(\u{R}_{p_{i+1}} \u{F}_{p_{i+1}}\big)\\
&\leq 13\, \sum_{i=0}^{H-1} \ell\big(\arc{\u{F}_{p_i} \u{F}_{p_{i+1}}}\big)+20\, \ell\big(\u{R}_0 \u{F}_0\big) + 100\, \sum_{i=0}^{H-2} \ell\big(\arc{\u{F}_{p_i} \u{F}_{p_{i+1}}}\big)\\
&\leq 113\,\sum_{i=0}^{H-1} \ell\big(\arc{\u{F}_{p_i} \u{F}_{p_{i+1}}}\big)+20\, \ell\big(\u{R}_0 \u{F}_0\big)
= 113\, \ell\big(\arc{\u{F}_0\u{F}_W}\big)+20\, \ell\big(\u{R}_0 \u{F}_0\big)\\
&= 113\, \ell\big(\arc{\u{B}_0 \u{B}_N}\big)+20\,\ell\big(\u{P}_0 \u{B}_0\big)
\leq 113\, \ell\big(\arc{\u{P}_0 \u{B}_N}\big) = 113\, \lcurve{PB}\,.
\end{align*}
\end{proof}

\bigstep{VI}{Setting the speed of the piecewise affine paths inside a sector}

Keep in mind that we have to define a piecewise affine path from $\u{P}$ to $\u{O}$ as the image under $v$ of the segment $PO\subseteq \D$. This path will start with the curve $\arc{\u{PP}_N}$ that we defined in Step~IV. However, sending the (beginning of the) segment $PO$ on the path $\arc{\u{PP}_N}$ at constant speed is not the right choice. Basically, the reason is the following: if two points $\u{P}$ and $\u{Q}$ in $\arc{\u{AB}}$ have distance $\eps>0$, the lengths of $\arc{\u{PP}_N}$ and of $\arc{\u{QQ}_N}$ may differ of $K\eps$ for any big constant $K$ (e.g., when $\DoubleS(\u{AB})$ has a spiral shape), thus if we use the constant speed in the definition of $v$ we end up with a piecewise affine function with triangles having arbitrarily small angles, thus with an arbitrarily large bi-Lipschitz constant. For this reason, we parameterize the paths $\arc{\u{PP}_N}$ with a non constant speed. Choosing the correct speed is precisely the aim of this step.\par\smallskip

Let us start with the definition of a ``possible speed function''.
\begin{definition}
Let $\DoubleS(\u{AB})$ be a sector, and let $\Sigma$ be the union of the paths $\arc{\u{PP}_N}$ for all the vertices $\u{P}$ of $\arc{\u{AB}}$ (such union is disjoint by Lemma~\ref{lemmastep4}). We say that $\tau:\Sigma\to\R^+$ is a \emph{possible speed function} if for any vertex $\u{P}\in\arc{\u{AB}}$ one has
\begin{itemize}
\item $\tau(\u{P}) = 0$\,,
\item for each vertex $\u{P}\in\arc{\u{AB}}$ and each $0\leq i < N(\u{P})$, the restriction of $\tau$ to the closed segment $\u{P}_i\u{P}_{i+1}$ is affine\,.
\end{itemize}
Moreover, for any $\u{S}$ belonging to the open segment $\u{P}_i\u{P}_{i+1}$, we shall write
\begin{equation}\label{deftau'}
\tau'(\u{S}) := \frac{\tau(\u{P}_{i+1})- \tau(\u{P}_i)}{\ell\big(\u{P}_i\u{P}_{i+1}\big)}\,.
\end{equation}
\end{definition}
To avoid misunderstandings in the following result, we point the reader's attention to the fact that, if one considers $\tau(\u{S})$ as the time at which the curve $\arc{\u{PP}_N}$ passes through $\u{S}$, then in fact $\tau'(\u{S})$ corresponds to the \emph{inverse} of the speed of the curve. Let us now state and prove the main result of this step.

\begin{lemma}\label{lemmastep6}
There exists a possible speed function $\tau$ such that
\begin{align}
&\frac 1{60L} \leq \tau'(\u{S}) \leq 1 \qquad \forall\,\u{S}\in \Sigma\,,&&\phantom{a}\label{cond1}\\
&\begin{array}{c}
\hbox{if $\u{P}_i$ and $\u{Q}_j$ belong to the same exit side of a triangle, then}\\
|\tau(\u{P}_i) - \tau(\u{Q}_j)|\leq 170 L\, \lcurve{PQ}\,.
\end{array}\label{cond2}
\end{align}
\end{lemma}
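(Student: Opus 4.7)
I would prove Lemma~\ref{lemmastep6} by induction on the weight of the sector $\DoubleS(\u{AB})$, following the inductive scheme of Lemmas~\ref{lemmastep3} and~\ref{lemmastep4}. In the base case (weight~$2$, so $\DoubleS(\u{AB})$ is a single triangle $\u{ABC}$ containing only the good path $\u{CC}_1$), set $\tau(\u{C}):=0$ and $\tau(\u{C}_1):=\ell(\u{CC}_1)/(60L)$, giving $\tau'\equiv 1/(60L)$; then~(\ref{cond1}) holds and~(\ref{cond2}) is vacuous.

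For the inductive step, decompose $\DoubleS(\u{AB})$ into the top triangle $\u{ABC}$ and one or two sub-sectors $\DoubleS(\u{AC})$, $\DoubleS(\u{BC})$ of strictly smaller weight, exactly as in the proof of Lemma~\ref{lemmastep3}. The inductive hypothesis yields possible speed functions on these sub-sectors, hence values $\tau(\u{P}_{N-1})$ at every point where a path $\arc{\u{PP}_N}$ enters the top triangle through $\u{AC}$ or $\u{BC}$. What remains is to extend $\tau$ affinely on each new segment $\u{P}_{N-1}\u{P}_N$ and to assign the endpoint value $\tau(\u{P}_N)$ on the new exit side $\u{AB}$. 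All previous instances of~(\ref{cond1}) and~(\ref{cond2}) are inherited from induction, and since paths originating in $\arc{\u{AC}}$ and $\arc{\u{BC}}$ share an exit side only at $\u{AB}$, the only new case of~(\ref{cond2}) is
\[
|\tau(\u{P}_N)-\tau(\u{Q}_N)|\leq 170L\,\lcurve{PQ}\qquad\forall\,\u{P},\u{Q}\in\arc{\u{AB}}.
\]

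My strategy is to set $\tau(\u{P}_N):=\tau(\u{P}_{N-1})+\alpha_{\u{P}}\,\ell(\u{P}_{N-1}\u{P}_N)$ with $\alpha_{\u{P}}\in[1/(60L),1]$, which automatically yields~(\ref{cond1}) on the new segment. The slopes $\alpha_{\u{P}}$ will be tuned so that the values $\tau(\u{P}_N)$ are monotone along $\u{AB}$ and as close as possible to an affine function of position along $\u{AB}$ with slope at most $60L$, pivoting at $\u{C}_1$ (where the value $\tau(\u{C}_1)$ is prescribed by the trivial path of $\u{C}$). With this done, the bound $\ell(\u{P}_N\u{Q}_N)\leq\lcurve{PQ}$ from~(iii) of Lemma~\ref{lemmastep4} gives $|\tau(\u{P}_N)-\tau(\u{Q}_N)|\leq 60L\,\lcurve{PQ}\leq 170L\,\lcurve{PQ}$, so~(\ref{cond2}) holds with room to spare.

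The main obstacle will be verifying that the prescribed $\alpha_{\u{P}}$ indeed lie in $[1/(60L),1]$, which requires simultaneously bounding $\tau(\u{P}_{N-1})$ from above (via the bound $\ell(\arc{\u{PP}_{N-1}})\leq 113\min\{\lcurve{AP},\lcurve{PC}\}$ of Lemma~\ref{lemmastep5}) and $\ell(\u{P}_{N-1}\u{P}_N)$ from below (via the angle lower bounds in~(i) of Lemma~\ref{lemmastep4}). I expect the sharpest case to be Part~4 of the proof of Lemma~\ref{lemmastep4} (shrinking spiral with small base angle $\beta$), where the contraction factor $\lambda>3/7$ of~\eqref{estilambda} drives the choice of the constants $60L$ and $170L$. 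A slight strengthening of the inductive hypothesis may well be needed, distinguishing the Lipschitz constant at the final exit side of the sub-sector from that on the earlier exit sides, so as to absorb the extra increment $\alpha_{\u{P}}\,\ell(\u{P}_{N-1}\u{P}_N)$ in the transition from $\u{AC}\cup\u{BC}$ to $\u{AB}$.
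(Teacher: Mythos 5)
Your inductive skeleton (induction on the weight, splitting off the top triangle $\u{ABC}$, inheriting everything except the values on the new exit side $\u{AB}$) coincides with the paper's, and your reduction of the problem to choosing the increments $\tau(\u{P}_N)-\tau(\u{P}_{N-1})\in\big[\tfrac{1}{60L}\ell(\u{P}_{N-1}\u{P}_N),\,\ell(\u{P}_{N-1}\u{P}_N)\big]$ is correct. But the mechanism you propose for securing~(\ref{cond2}) is where the gap lies. You aim to tune the slopes so that $\tau$ restricted to $\u{AB}$ becomes $60L$-Lipschitz \emph{with respect to Euclidean position on $\u{AB}$}, and then conclude via $\ell(\u{P}_N\u{Q}_M)\leq\lcurve{PQ}$. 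This target is strictly stronger than~(\ref{cond2}): since by~(iii) of Lemma~\ref{lemmastep4} one only has $\ell(\u{P}_N\u{Q}_M)\geq \ell(\arc{PQ})/(7L)\geq \lcurve{PQ}/(7L^2)$, your requirement can be up to a factor of order $L^2$ more demanding than the bound $170L\,\lcurve{PQ}$ you actually need. The inductive hypothesis only gives $|\tau(\u{P}_{N-1})-\tau(\u{Q}_{M-1})|\leq 170L\,\lcurve{PQ}$, and the room for adjustment, $\ell(\u{P}_{N-1}\u{P}_N)$ versus $\ell(\u{Q}_{M-1}\u{Q}_M)$, bears no quantitative relation to $\lcurve{PQ}$ (one of the two segments crossing $\u{ABC}$ may be tiny while $\lcurve{PQ}$ is comparable to $\lcurve{AB}$). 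So there is no reason the prescribed $\alpha_{\u{P}}$ land in $[1/(60L),1]$, and you give no argument for it; you correctly flag this as ``the main obstacle,'' but that obstacle \emph{is} the lemma.

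The paper resolves this differently: it first sets the tentative value $\tt(\u{P}_N)=\tau(\u{P}_{N-1})+\tfrac{1}{60L}\ell(\u{P}_{N-1}\u{P}_N)$ and then defines $\tau(\u{P}_N)$ as the maximum of $\tt(\u{P}_N)$ and $\max_{\u{Q}}\big\{\tt(\u{Q}_M)-170L\,\ell(\arc{\u{PQ}})\big\}$ --- a McShane-type regularization with respect to the pseudometric $\ell(\arc{\u{PQ}})$ on $\arc{\u{AB}}$, \emph{not} with respect to position on $\u{AB}$. This makes~(\ref{cond2}) and the lower bound in~(\ref{cond1}) automatic, and shifts all the work to proving $\tau'\leq 1$, i.e.\ that the max never exceeds $\tau(\u{P}_{N-1})+\ell(\u{P}_{N-1}\u{P}_N)$. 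That is established by contradiction: if some $\u{Q}$ realizes a too-large max, a similar-triangles estimate forces $\ell(\u{AQ}_{M-1})\geq 15L\,\ell(\u{AP}_{N-1})$, whence $\tt(\u{Q}_M)\geq\tfrac{340}{3}\lcurve{AQ}$, contradicting the length bound $\ell(\arc{\u{QQ}_M})\leq 113\,\lcurve{AQ}$ of Lemma~\ref{lemmastep5}. You correctly anticipate that Lemma~\ref{lemmastep5} and the angle bounds of Lemma~\ref{lemmastep4}~(i) must enter, but to complete your proof you would either have to abandon the ``affine in position on $\u{AB}$'' target in favour of Lipschitz control in the arc-length pseudometric, or supply a genuinely new argument showing your stronger target is attainable.
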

\begin{proof}
We start noticing that, in order to define $\tau$, it is enough to fix $\tau'$ within the whole path $\arc{\u{PP}_N}$ for any vertex $\u{P}\in\arc{\u{AB}}$. We argue again by induction on the weight of the sector.
\case{I}{The weigth of $\DoubleS(\u{AB})$ is $2$.}
In this case, the sector is a triangle $\u{ABC}$, and we directly set $\tau'\equiv 1$ within all $\Sigma$, so that~(\ref{cond1}) is clearly true. Consider now~(\ref{cond2}). Since there is only a single triangle, then necessarily $i=j=1$ and $\u{P}_1$ and $\u{Q}_1$ belong to $\u{AB}$, so that
\begin{align*}
\tau(\u{P}_1) = \ell\big(\u{PP}_1\big)\,, && \tau(\u{Q}_1) = \ell\big(\u{QQ}_1\big)\,,
\end{align*}
by the choice $\tau'\equiv 1$. It is then enough to recall Lemma~\ref{lemmastep4} (iii) and to use the triangular inequality to get
\[
|\tau(\u{P}_1) - \tau(\u{Q}_1)| = \big|\ell\big(\u{PP}_1\big) - \ell\big(\u{QQ}_1\big)\big|
\leq \lsegm{PQ} + \ell\big(\u{P}_1\u{Q}_1\big)
\leq 2\lsegm{PQ}\,,
\]
so that~(\ref{cond2}) holds true.

\case{II}{The weigth of $\DoubleS(\u{AB})$ is at least $3$.}

In this case, let us consider the maximal triangle $\u{ABC}$. Then, we can assume that $\tau$ has been already defined in the sectors $\DoubleS(\u{AC})$ and $\DoubleS(\u{BC})$, emptily if the segment $\u{AC}$ (resp. $\u{BC}$) belongs to $\partial\Delta$, and by inductive assumption otherwise, and with the properties that $1/60L \leq \tau'(\u{S}) \leq 1$ for every $\u{S}\in\DoubleS(\u{AC})\cup\DoubleS(\u{BC})$, and that
\begin{equation}
\big|\tau(\u{P}_{N-1}) - \tau(\u{Q}_{M-1})\big| \leq 170 L\, \ell\big(\u{PQ}\big) \label{indass}\\
\end{equation}
for every $\u{P},\,\u{Q}\in\arc{\u{AB}}$. Here we write for brevity $N=N(\u{P})$ and $M=N(\u{Q})$, so that both $\u{P}_{N-1}$ and $\u{Q}_{M-1}$ belong to $\u{AC}\cup \u{BC}$. Notice that~(\ref{indass}) follows by inductive assumption even if $\u{P}_{N-1}\in \u{AC}$ and $\u{Q}_{M-1}\in \u{BC}$, just applying~(\ref{cond2}) once to $\u{P}_{N-1}$ and $\u{C}$, and once to $\u{Q}_{M-1}$ and $\u{C}$.\par
Thus, we only have to define $\tau$ in the triangle $\u{ABC}$ and by definition of possible speed function it is enough to set $\tau$ on the segment $\u{AB}$ or, equivalently, to set $\tau'$ on the triangle $\u{ABC}$.\par
Let us begin with a temptative definition, namely, we define $\tt$ by putting $\tt'\equiv 1/60 L$ in $\u{ABC}$, and we will define $\tau$ as a modification --if necessary-- of $\tt$. Notice that, for any $\u{P}_{N-1}\in \u{AC}\cup\u{BC}$, our definition consists in setting
\begin{equation}\label{deftt}
\tt(\u{P}_N) = \tau(\u{P}_{N-1}) + \frac{1}{60L}\,\ell\big( \u{P}_{N-1}\u{P}_N\big)\,.
\end{equation}
Of course the function $\tt$ satisfies~(\ref{cond1}), but in general it is not true that~(\ref{cond2}) holds. 

We can now define the function $\tau$ by setting
\begin{equation}\label{deftau}
\tau(\u{P}_N) := \tt(\u{P}_N) \vee \max \Big\{ \tt(\u{Q}_M) - 170 L \,\ell\big(\arc{\u{PQ}}\big):\, \u{Q} \in \arc{\u{AB}}\Big\} \,,
\end{equation}
for any vertex $\u{P}\in\arc{\u{AB}}$. Since by definition $\tau\geq \tt$, it is also $\tau'\geq \tt' = 1/60L$ in the triangle $\u{ABC}$, so the first inequality in~(\ref{cond1}) holds true also for $\tau$.\par

It is also easy to check~(\ref{cond2}). Indeed, take $\u{P}$ and $\u{Q}$ in $\arc{\u{AB}}$, and consider two possibilities: if $\tau(\u{Q}_M) = \tt(\u{Q}_M)$, then
\begin{equation}\label{bothcases}
\tau(\u{P}_N) \geq \tt(\u{Q}_M) - 170 L \, \ell\big(\arc{\u{PQ}}\big) = \tau(\u{Q}_M)-170 L \, \ell\big(\arc{\u{PQ}}\big)\,.
\end{equation}
On the other hand, if $\tau(\u{Q}_M)=\tt(\u{R}_K) - 170 L \, \ell\big(\arc{\u{QR}}\big)$ for some $\u{R}\in\arc{\u{AB}}$ with $K=N(\u{R})$, then
\[\begin{split}
\tau(\u{P}_N) &\geq \tt(\u{R}_K) - 170 L \, \ell\big(\arc{\u{PR}}\big)
\geq \tt(\u{R}_K) - 170 L \,\ell\big(\arc{\u{PQ}}\big)- 170 L\,\ell\big(\arc{\u{QR}}\big)\\
&= \tau(\u{Q}_M) - 170 L\,\ell\big(\arc{\u{PQ}}\big)\,,
\end{split}\]
so that~(\ref{bothcases}) is true in both cases. Exchanging the roles of $\u{P}$ and $\u{Q}$ immediately yields~(\ref{cond2}). Summarizing, to conclude the thesis we only have to check that $\tau'\leq 1$ on $\u{ABC}$, which by induction amounts to check that for any $\u{P}\in \arc{\u{AB}}$ one has
\[
\tau(\u{P}_N) - \tau(\u{P}_{N-1}) \leq \ell\big(\u{P}_{N-1}\u{P}_N\big) \,.
\]
Let us then assume the existence of some vertex $\u{P}\in\arc{\u{AB}}$ such that
\begin{equation}\label{absurd}
\tau(\u{P}_N) - \tau(\u{P}_{N-1}) > \ell\big(\u{P}_{N-1}\u{P}_N\big) \,,
\end{equation}
and the searched inequality will follow once we find some contradiction. By symmetry, we assume that $\u{P}_{N-1}\in \u{AC}$. Of course, if $\tau(\u{P}_N)=\tt(\u{P}_N)$ then~(\ref{deftt}) already prevents the validity of~(\ref{absurd}). Therefore, keeping in mind~(\ref{deftau}), we obtain the existence of some vertex $\u{Q}\in\u{\arc{AB}}$ such that
\begin{equation}\label{condQ}
\tau(\u{P}_N)=\tt(\u{Q}_M) - 170 L\,\ell\big(\arc{\u{PQ}}\big)\,,
\end{equation}
which gives
\[
\tau(\u{P}_N)= \tau(\u{Q}_{M-1}) + \frac 1{60L}\, \ell\big(\u{Q}_{M-1}\u{Q}_M\big)- 170 L\,\ell\big(\arc{\u{PQ}}\big)\,.
\]
Recalling~(\ref{indass}) and~(\ref{absurd}), we deduce
\[\begin{split}
\tau(\u{P}_{N-1}) &\geq \tau(\u{Q}_{M-1}) - 170 L\,\ell\big(\arc{\u{PQ}}\big)
=\tau(\u{P}_N) - \frac 1{60L}\, \ell\big(\u{Q}_{M-1}\u{Q}_M\big)\\
&> \tau(\u{P}_{N-1}) + \ell\big(\u{P}_{N-1}\u{P}_N\big)- \frac 1{60L}\, \ell\big(\u{Q}_{M-1}\u{Q}_M\big)\,,
\end{split}\]
so that
\begin{equation}\label{fewlines}
\ell\big(\u{Q}_{M-1}\u{Q}_M\big) > 60 L\, \ell\big(\u{P}_{N-1}\u{P}_N\big)\,.
\end{equation}
\begin{figure}[htbp]
\begin{center}
\input{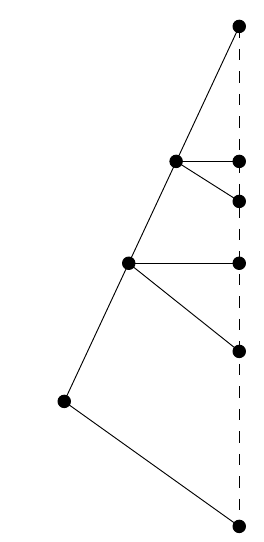_t}\vspace{-10pt}
\caption{The triangle $\u{ABC}$ with the points $\u{P}_{N-1},\,\u{P}_N,\, \u{P}_\perp$ and $\u{Q}_{M-1},\,\u{Q}_M,\,\u{Q}_\perp$.}\label{Fig:VIcase1}
\end{center}
\end{figure}
Call now, as in Figure~\ref{Fig:VIcase1}, $\u{P}_\perp$ and $\u{Q}_\perp$ the orthogonal projections of $\u{P}_{N-1}$ and $\u{Q}_{M-1}$ on the segment $\u{AB}$, and note that by a trivial geometrical argument --recalling that $\u{P}_{N-1}\in\u{AC}$-- one has
\[
\frac{\ell\big(\u{P}_{N-1}\u{P}_\perp\big)}{\ell\big(\u{Q}_{M-1}\u{Q}_\perp\big)}
\geq \frac{\ell\big(\u{AP}_{N-1}\big)}{\ell\big(\u{AQ}_{M-1}\big)}\,,
\]
where the inequality is an equality if $\u{Q}_{M-1}\in\u{AC}$ as in the figure, while it is strict if $\u{Q}_{M-1}\in\u{BC}$. Then, recalling Lemma~\ref{lemmastep4}~(i) and~(\ref{fewlines}), one has
\[\begin{split}
\ell\big(\u{P}_{N-1}\u{P}_N\big) &\geq \ell\big(\u{P}_{N-1}\u{P}_\perp\big)
\geq \ell\big(\u{Q}_{M-1}\u{Q}_\perp\big)\,\frac{\ell\big(\u{AP}_{N-1}\big)}{\ell\big(\u{AQ}_{M-1}\big)}\\
&= \ell\big(\u{Q}_{M-1}\u{Q}_M\big)\,\sin\big(\angle{\u{Q}_{M-1}}{\u{Q}_M}{\u{A}}\big)\,\frac{\ell\big(\u{AP}_{N-1}\big)}{\ell\big(\u{AQ}_{M-1}\big)}
\geq \frac{1}{4}\,\ell\big(\u{Q}_{M-1}\u{Q}_M\big)\,\frac{\ell\big(\u{AP}_{N-1}\big)}{\ell\big(\u{AQ}_{M-1}\big)}\\
&> 15L\,\ell\big(\u{P}_{N-1}\u{P}_N\big)\,\frac{\ell\big(\u{AP}_{N-1}\big)}{\ell\big(\u{AQ}_{M-1}\big)}\,,
\end{split}\]
which means
\[
\ell\big(\u{AQ}_{M-1}\big) \geq 15 L\,\ell\big(\u{AP}_{N-1}\big)\,.
\]
Making again use of Lemma~\ref{lemmastep4}~(iii) and of the Lipschitz property of $u$, we then have
\[\begin{split}
\lcurve{PQ} &\geq \ell\big(\u{P}_{N-1}\u{Q}_{M-1}\big)
\geq \ell\big(\u{AQ}_{M-1}\big) - \ell\big(\u{AP}_{N-1}\big)
\geq 14L\,\ell\big(\u{AP}_{N-1}\big)
\geq 2\,\ell\big(\arc{AP}\big)\\
&\geq \frac 2 L\,\lcurve{AP}\,,
\end{split}\]
so that
\[
3\,\lcurve{PQ} \geq \bigg(1+ \frac 2 L\bigg)\,\lcurve{PQ}
\geq \frac 2 L\,\Big(\lcurve {AP} + \lcurve{PQ}\Big)
\geq \frac 2 L\,\lcurve{AQ}\,.
\]
Hence, by~(\ref{condQ}) and again by the Lipschitz property of $u$,
\begin{equation}\label{finalcontr}
\tt(\u{Q}_M) \geq 170 L\, \ell\big(\u{PQ}\big)
\geq \frac{340}{3}\, \lcurve{AQ}\,.
\end{equation}
On the other hand, by definition and inductive assumption,
\[
\tt(\u{Q}_M) = \tau(\u{Q}_{M-1}) + \frac 1{60L}\,\ell\big(\u{Q}_{M-1}\u{Q}_M\big)
\leq \ell\big(\arc{\u{QQ}_{M-1}}\big)+ \frac 1{60L}\,\ell\big(\u{Q}_{M-1}\u{Q}_M\big)
\leq \ell\big(\arc{\u{QQ}_M}\big)\,,
\]
which recalling Lemma~\ref{lemmastep5} of Step~V gives
\[
\tt(\u{Q}_M) \leq 113 \, \lcurve{AQ} <  \frac{340}{3}\, \lcurve{AQ}\,.
\]
Since this is in contradiction with~(\ref{finalcontr}), the proof of the lemma is concluded.
\end{proof}

\bigstep{VII}{Definition of the extension inside a primary sector}

We are finally ready to define the extension of $u$ inside a primary sector. The goal of this step is to take a primary sector $\DoubleS(\u{AB})$, being $\u{A}=u(A)$ and $\u{B}=u(B)$, with $A,\, B\in\partial\D$ as usual, and to define a piecewise affine bi-Lipschitz extension $u_{AB}$ of $u$ which sends a suitable subset $\D_{AB}$ of the square $\D$ onto $\DoubleS(\u{AB})$ (see Figure~\ref{Fig:step7}). First we observe a simple trigonometric estimate for the bi-Lipschitz constant of an affine map between two triangles and then we state and prove the main result of this step.
\begin{lemma}\label{liptriangle}
Let $\T$ and $\T'$ be two triangles in $\R^2$, and let $\phi$ be a bijective affine map sending $\T$ onto $\T'$. Call $a,\,b$ and $\alpha$ the lengths of two sides of $\T$ and the angle between them, and let $a',\, b'$ and $\alpha'$ be the correponding lengths and angle in $\T'$. Then, the Lipschitz constant of the map $\phi$ can be bounded as
\begin{equation}\label{lipschtriangle}
{\rm Lip} (\phi) \leq \frac {a'}a + \frac{b'\sin \alpha'}{b\sin\alpha} + \bigg| \frac{b'\cos \alpha'}{b\sin\alpha} - \frac{a'\cos\alpha}{a\sin\alpha}\bigg|
\leq \frac{a'}a + \frac{2b'}{b\sin\alpha} + \frac{a'}{a\sin\alpha}\,.
\end{equation}
\end{lemma}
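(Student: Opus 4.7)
The plan is to reduce to a simple explicit matrix computation via a change of coordinates. Since the Lipschitz constant of $\phi$ is invariant under pre- and post-composition with isometries, I would place both triangles in a standard position: put the vertex of angle $\alpha$ (resp.\ $\alpha'$) at the origin, the side of length $a$ (resp.\ $a'$) along the positive $x$-axis, and the side of length $b$ (resp.\ $b'$) in the upper half-plane. Then the three vertices of $\T$ are $(0,0)$, $(a,0)$, $(b\cos\alpha,b\sin\alpha)$, and analogously for $\T'$. The map $\phi$ becomes linear and its matrix $A$ is determined by the conditions $A(a,0)^\top=(a',0)^\top$ and $A(b\cos\alpha,b\sin\alpha)^\top=(b'\cos\alpha',b'\sin\alpha')^\top$.

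Solving these equations (the first forces the first column of $A$, and the second then determines the second column), I would obtain
\[
A=\begin{pmatrix} \dfrac{a'}{a} & \dfrac{b'\cos\alpha'}{b\sin\alpha}-\dfrac{a'\cos\alpha}{a\sin\alpha} \\[2pt] 0 & \dfrac{b'\sin\alpha'}{b\sin\alpha}\end{pmatrix}.
\]
Call the three nonzero entries $r=a'/a$, $p=b'\cos\alpha'/(b\sin\alpha)-a'\cos\alpha/(a\sin\alpha)$, $q=b'\sin\alpha'/(b\sin\alpha)$. For any unit vector $x=(x_1,x_2)$ one has $|Ax|^2=(rx_1+px_2)^2+(qx_2)^2\leq(|r|+|p|)^2+q^2$, and since $\sqrt{X^2+Y^2}\leq X+Y$ for $X,Y\geq 0$, this yields
\[
\mathrm{Lip}(\phi)=\|A\|\leq |r|+|p|+|q|,
\]
which is exactly the first inequality in~(\ref{lipschtriangle}).

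For the second inequality, I would split the absolute value by the triangle inequality and use $|\cos\alpha|,|\cos\alpha'|,|\sin\alpha'|\leq 1$, getting
\[
\left|\frac{b'\cos\alpha'}{b\sin\alpha}-\frac{a'\cos\alpha}{a\sin\alpha}\right|\leq\frac{b'}{b\sin\alpha}+\frac{a'}{a\sin\alpha},\qquad \frac{b'\sin\alpha'}{b\sin\alpha}\leq \frac{b'}{b\sin\alpha},
\]
and adding these with $a'/a$ gives the claimed bound $a'/a+2b'/(b\sin\alpha)+a'/(a\sin\alpha)$. There is no real obstacle here: the whole lemma is a direct computation once the triangles are placed in canonical position, and the only subtlety is the elementary estimate $\sqrt{X^2+Y^2}\leq X+Y$ used to pass from the matrix entries to the operator norm.
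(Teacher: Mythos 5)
Your proof is correct and follows essentially the same route as the paper: both place the triangles in a canonical position so that $\phi$ is represented by the upper-triangular matrix you wrote down, and then bound the operator norm by the sum of (absolute values of) its entries. The only difference is that you spell out the elementary estimates ($|Ax|\leq\sqrt{(|r|+|p|)^2+q^2}\leq |r|+|p|+|q|$ and the triangle inequality for the second bound) which the paper leaves implicit.
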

\begin{proof}
Let us take an orthonormal basis $\{\ee_1,\,\ee_2\}$ of $\R^2$. Up to an isometry of the plane, we can assume that the two sides of lengths $a$ and $a'$ are both on the line $\{\ee_2=0\}$, that the two triangles $\T$ and $\T'$ both lie in the half-space $\{\ee_2\geq 0\}$ and that the vertices whose angles are given by $\alpha$, $\alpha'$ coincide with the point $(0,0)$. Hence, one has that $\phi(x) = M \, x + \omega$, for some vector $\omega\in\R^2$ and a $2\times 2$ matrix $M$. We have then
\[
{\rm Lip} (\phi) = | M | = \sup_{\nu\neq 0} \,\frac{|M \nu |}{|\nu|}\,.
\]
With our choice of coordinates, we have clearly
\begin{align*}
M\big(a, 0 \big) = \big(a' ,0 \big)\,, && M\big( b\cos \alpha, b \sin\alpha\big) = \big( b' \cos \alpha', b' \sin \alpha'\big)\,,
\end{align*}
which immediately gives
\[
M = \left( \begin{array}{cc}
\bal\frac{a'}{a}\eal\quad & \bal\frac{b'\cos\alpha'}{b\sin\alpha} - \frac{a'\cos\alpha}{a\sin\alpha}\eal \\[14pt]
0\quad & \bal\frac{b'\sin\alpha'}{b\sin\alpha}\eal
\end{array}\right)\,,
\]
from which the estimate~(\ref{lipschtriangle}) immediately follows.
\end{proof}

\begin{figure}[htbp]
\begin{center}
\input{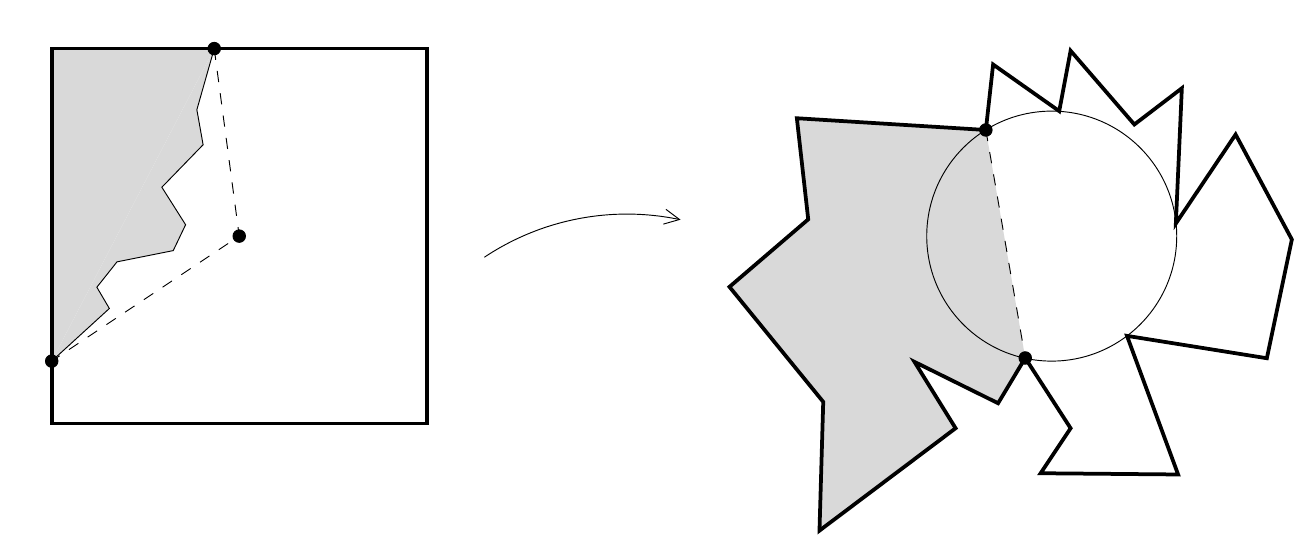_t}\vspace{-10pt}
\caption{The function $u_{AB}:\D_{AB}\to \DoubleS(\u{AB})$.}\label{Fig:step7}
\end{center}
\end{figure}

\begin{lemma}\label{lemmastep7}
Let $\DoubleS(\u{AB})$ be a primary sector. Then there exists a polygonal subset $\D_{AB}$ of $\D$, and a piecewise affine map $u_{AB}:\D_{AB}\to \DoubleS(\u{AB})$ such that:
\begin{enumerate}
\item[(i)] for any $P\in \partial \D$, one has $\D_{AB}\cap OP = \emptyset$ if $P \notin \arc{AB}$, $\D_{AB}\cap OP = \{P\}$ if $P\in \{A,\, B\}$, and $\D_{AB}\cap OP=PP_N$ with $P_N=t O + (1-t) P$ and $0<t=t(P)\leq 4/5$ if $P \in\arc{AB}\setminus \{A,\, B\}\,.$
\item[(ii)] $u_{AB}=u$ on $\arc{AB}=\partial D\cap \D_{AB}$\,.
\item[(iii)] $u_{AB}$ is bi-Lipschitz with constant $212000 L^4$.
\item[(iv)] For any two consecutive vertices $P,\,Q\in\arc{AB}$, one has $\bal\angle{P_{N(P)}}{Q_{N(Q)}}{O}\geq\frac{1}{87L}\eal$.
\end{enumerate}
\end{lemma}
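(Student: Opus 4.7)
The plan is to parametrise the radial segments of $\D_{AB}$ using the speed function $\tau$ of Lemma~\ref{lemmastep6} and to triangulate the regions between consecutive radial segments by exploiting the combinatorial merging of the natural sequences of triangles associated to consecutive vertices of $\arc{AB}$. Concretely, I fix a scaling factor $\sigma>0$ (small, of order $1/L$) and for every vertex $P\in\arc{AB}$ and every $0\leq i\leq N(P)$ place $P_i$ on the segment $OP$ at distance $\sigma\,\tau(\u{P}_i)$ from $P$, setting $u_{AB}(P_i):=\u{P}_i$ and extending affinely on each $P_iP_{i+1}$. By Lemma~\ref{lemmastep5}, $\tau(\u{P}_{N(P)})\leq 113\,\lcurve{AB}\leq 226$, so since $\ell(PO)\geq 1/2$ a sufficiently small choice of $\sigma$ guarantees $\ell(PP_{N(P)})\leq (4/5)\ell(PO)$, giving~(i); property~(ii) is then immediate.

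For consecutive vertices $P,Q\in\arc{AB}$, the natural sequences of triangles associated to $\u{P}$ and $\u{Q}$ merge from a certain index on, yielding $i_0,j_0$ with $N(P)-i_0=N(Q)-j_0$ such that $\u{P}_{i_0+l}$ and $\u{Q}_{j_0+l}$ lie on a common exit side for each admissible $l$. I then triangulate the region between $PP_{N(P)}$ and $QQ_{N(Q)}$ in $\D$ by drawing the segments $P_{i_0+l}Q_{j_0+l}$ and splitting each resulting quadrilateral into two triangles by a diagonal; the (short) pre-merging region and the outer triangle along $PQ\subseteq\partial\D$ are handled by adding the auxiliary segment $PQ$ and linking it to $P_1$ or $Q_1$. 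Extending $u_{AB}$ affinely on every resulting triangle produces the desired piecewise affine map, and property~(iv) follows from a direct trigonometric estimate in the triangle $OP_{N(P)}Q_{N(Q)}$ that uses $\angle POQ\leq 1/(50L)$ from Remark~\ref{angle<1} together with the control $|\sigma\tau(\u{P}_{N(P)})-\sigma\tau(\u{Q}_{N(Q)})|\leq 170L\sigma\,\lcurve{PQ}$ from Lemma~\ref{lemmastep6}.

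Property~(iii) is the core of the proof and the main obstacle: it must be verified triangle by triangle via Lemma~\ref{liptriangle}, and the delicate point is that on each triangle one must pick the vertex at which to read the angle pair $(\alpha,\alpha')$ so as to invoke the best image-side angle bound from Lemma~\ref{lemmastep4}(i), which is $\geq\pi/12$ at points on an exit side but only $\geq\arcsin(1/(6L^2))$ at points on an entry side of an admissible triangle of the partition. In the forward direction one combines $a'/a=1/(\sigma\tau')\leq 60L/\sigma$ (Lemma~\ref{lemmastep6}), the upper bound $b'\leq\lcurve{PQ}\leq L\,\ell(PQ)$ (Lemma~\ref{lemmastep4}(iii) together with the bi-Lipschitz property of $u$), a trigonometric lower bound on $b=\ell(P_iQ_j)$ exploiting $\angle POQ\leq 1/(50L)$ and the control on matched pairs from Lemma~\ref{lemmastep6}, and a constant lower bound on $\sin\alpha$. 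In the inverse direction $a/a'\leq\sigma$, $b/b'\leq 7L\cdot O(1)$ (from $b'\geq\lcurve{PQ}/(7L)$ in Lemma~\ref{lemmastep4}(iii)), and the weak bound $\sin\alpha'\geq 1/(6L^2)$ produces the dominant factor $L^2$ in ${\rm Lip}(u_{AB}^{-1})$. The weak entry-side estimate is unavoidable on at least one of the two sub-triangles of every quadrilateral; multiplying it by the factor $L$ from $b/b'$ and by the extra $L$-factors coming from the radial direction and from $\ell(\u{PQ})\leq L\,\ell(PQ)$ in the outer triangles, and balancing $\sigma\sim 1/L$, the bi-Lipschitz constant of $u_{AB}$ works out to at most $CL^4$ for an explicit $C\leq 212000$.
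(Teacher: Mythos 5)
Your construction is essentially the paper's own: the points $P_i$ are placed on $OP$ at a parameter proportional to $\tau(\u{P}_i)$ (the paper takes exactly $t_{P,i}=\tau(\u{P}_i)/(10L)$), the strip between two consecutive radial segments is triangulated by a fan over the not-yet-merged part of the path of $\u{P}$ (for consecutive vertices the merge index is in fact $j_0=0$: $\u{Q}$ itself lies on the exit sides of the first $N(P)-N(Q)$ triangles, so the fan has apex $Q$) plus quadrilaterals split along a diagonal, and (iii)--(iv) are checked triangle by triangle via Lemma~\ref{liptriangle}, with side ratios from Lemma~\ref{lemmastep4}(iii) and Lemma~\ref{lemmastep6} and image angles from Lemma~\ref{lemmastep4}(i). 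So the route is the right one; the problems are in two quantitative claims on which your final constant rests.

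First, there is no constant lower bound on the domain angles $\alpha$: condition~(\ref{cond2}) only yields $|t_{P,i}-t_{Q,j}|\leq 17L\,\ell(PQ)$ (after the $1/(10L)$ rescaling), while the transversal distance between the rays $OP$ and $OQ$ at depth $t\leq 4/5$ can be as small as $\ell(PQ)/5$, so $\alpha$ can genuinely be of order $1/L$; the correct bound is $\sin\alpha\geq 1/(88L)$, which is exactly why the forward Lipschitz estimate carries a term of size about $2\cdot 1200L^2\cdot 88L$ and why (iv) is stated with $1/(87L)$ rather than a universal angle. This does not ruin the order $L^4$ (the $L^4$ term comes from the inverse direction, $a/a'\leq 90L^2$ against the entry-side bound $\sin\tilde\alpha'\geq 1/(6L^2)$), but it invalidates the bookkeeping you use to reach the stated constant. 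Second, for (i) and for the normalization you should bound $\tau(\u{P}_{N})$ by $\ell\big(\arc{\u{PP}_N}\big)\leq 4\,\lcurve{AB}\leq 8L$ (Lemma~\ref{lemmastep4}(ii) together with $\tau'\leq 1$), which is what allows $\sigma=1/(10L)$ with $t\leq 4/5$ exactly; your bound via Lemma~\ref{lemmastep5} gives $113\,\lcurve{AB}\leq 226L$ (not $226$), forcing $\sigma$ smaller by more than an order of magnitude, after which the radial ratio $60L/\sigma$ inflates and the assertion that the constant ``works out to at most $212000$'' no longer follows. Since the explicit constant $212000L^4$ is part of the statement, this last step cannot be left as an assertion: it requires the actual computation with the paper's normalization and with the $1/(88L)$ angle bound.
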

\begin{proof}
We will divide the proof in three parts.
\part{1}{Definition of $\Gamma$, $\u{\Gamma}$, $u_{AB}:\partial\Gamma\to \partial\u{\Gamma}$, and validity of~(i) and~(ii).}
First of all, we take a vertex $P\in\arc{AB}$ and, for any $1\leq i \leq N=N(\u{P})$, we set
\begin{align}\label{deft_i}
P_i = t_{P,i} \, O + (1-t_{P,i}) P\,, && {\rm with} && t_{P,i} =  \frac{\tau(\u{P}_i)}{10L}\,,
\end{align}
where $\tau$ is the function defined in Lemma~\ref{lemmastep6}. Then, we define $u_{AB}$ on the segment $PP_N$ as the piecewise affine function such that for all $i$ one has $u_{AB}(P_i)=\u{P}_i$. It is important to observe that
\begin{align}\label{under45}
0 \leq t_{P,i} \leq \frac 45 \,,&& \forall P\in \arc{AB},\, 1\leq i \leq N=N(\u{P})\,.
\end{align}
Indeed, using~(\ref{cond1}) in Lemma~\ref{lemmastep6}, (ii) in Lemma~\ref{lemmastep4}, and the Lipschitz property of $u$, one has that
\[
\tau(\u{P}_i)\leq \tau(\u{P}_N) \leq \sum_{j=1}^N \ell\big(\u{P}_{j-1}\u{P}_j\big) = \ell \big( \arc{\u{PP}_N}\big) \leq 4\,\lcurve{AB} \leq 4 L \,\ell\big(\arc{AB} \big) \leq 8 L\,,
\]
so by~(\ref{deft_i}) we get~(\ref{under45}).\par
We are now ready to define the set $\D_{AB}$. Let us enumerate, just for one moment, the vertices of $\arc{AB}$ as $P^0\equiv A,\, P^1,\, P^2 \,\dots \, ,\, P^{W-1},\, P^{W}\equiv B$, following the order of $\arc{AB}$. The set $\D_{AB}$ is then defined as the polygon whose boundary is the union of $\arc{AB}$ with the path $AP^1_{N(1)}P^2_{N(2)}\cdots P^{W-1}_{N(W-1)}B$, as in Figure~\ref{Fig:step7}, where for each $0< i < W$ we have written $N(i) = N(\u{P}^i)$. Hence, property~(i) is true by construction and by~(\ref{under45}).\par
\begin{figure}[htbp]
\begin{center}
\input{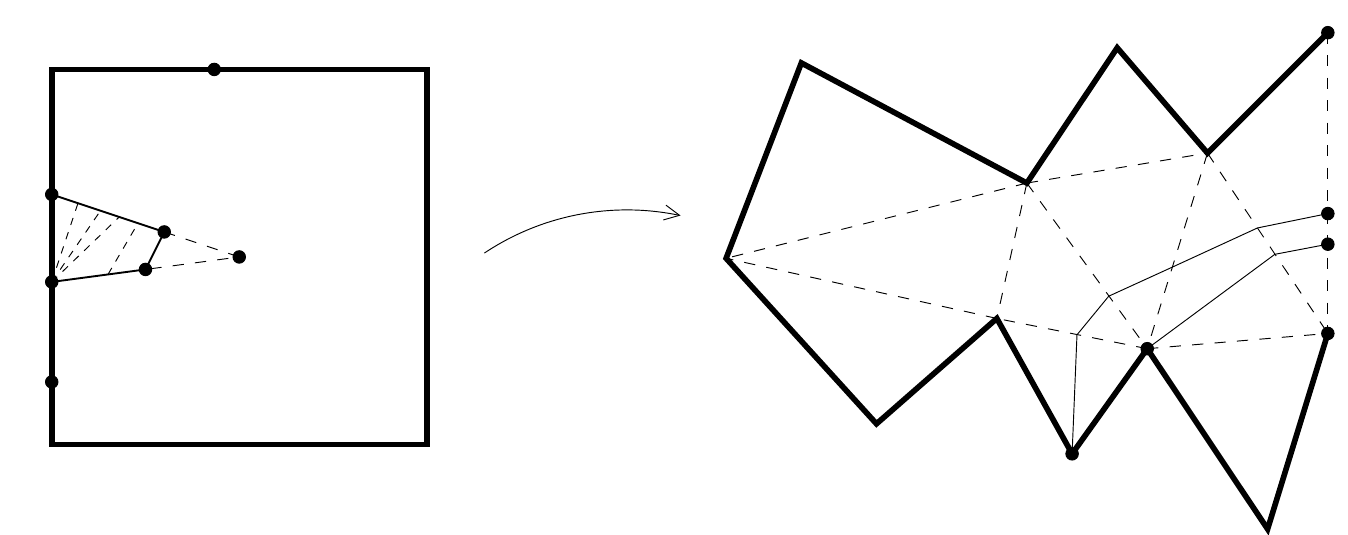_t}\vspace{-10pt}
\caption{The sets $\Gamma$ and $\u{\Gamma}$.}\label{Fig:step72}
\end{center}
\end{figure}
Then we take two generic consecutive vertices $P,\, Q\in \arc{AB}$, and we call $\Gamma\subseteq \D_{AB}$ the quadrilater $PP_NQ_MQ$, and $\u{\Gamma}\subseteq \DoubleS(\u{AB})$ the polygon whose boundary is $\u{PQ} \cup \arc{\u{QQ}_M} \cup \u{Q}_M\u{P}_N\cup \arc{\u{P}_N\u{P}}$, where we have set $N=N(\u{P})$ and $M=N(\u{Q})$. Notice that, varying the consecutive vertices $P$ and $Q$, $\D_{AB}$ is the union of the different polygons $\Gamma$, while $\DoubleS(\u{AB})$ is the union of the polygons $\u{\Gamma}$. We will then define the function $u_{AB}$ so that $u_{AB} (\Gamma) = \u{\Gamma}$. Let us start with the definition of $u_{AB}$ from $\partial\Gamma$ to $\partial\u{\Gamma}$. The function $u_{AB}$ has been already defined from the segment $PP_N$ to the path $\arc{\u{PP}_N}$ and from the segment $QQ_M$ to the path $\arc{\u{QQ_M}}$. Hence we conclude defining $u_{AB}$ to be affine from the segment $PQ$ to the segment $\u{PQ}$, and from $P_NQ_M$ to $\u{P}_N\u{Q}_M$. Notice that, as a consequence, also property~(ii) is true by construction. 

Now we see how to extend $u_{AB}$ from the interior of $\Gamma$ to the interior of $\u{\Gamma}$ satisfying properties~(iii) and~(iv).\par
Recalling the partition of $\DoubleS(\u{AB})$ in triangles done in Step~III, $\u{PQ}$ is a side of some triangle $\u{PQR}$, and since $\u{PQ}\subseteq \partial\Delta$ it cannot be the exit side. Let us then assume, without loss of generality, that the exit side is $\u{QR}$. Hence, it follows that $N>M$. Moreover, if $\big(\T_1,\, \T_2,\, \dots\,,\, \T_N\big)$ is the natural sequence of triangles related to $\u{P}$, as in Definition~\ref{def: nat seq}, then it is immediate to observe that $\u{Q}$ belong to the exit side of $\T_i$ for all $1\leq i \leq N-M$. Figure~\ref{Fig:step72} shows an example in which $N=5$ and $M=2$. In the following two parts, we will define $u_{AB}$ separately on the triangle $PP_{N-M}Q$ and on the quadrilateral $P_{N-M}P_NQ_MQ$, whose union is $\Gamma$.

\part{2}{Definition of $u_{AB}$ in the triangle $PP_{N-M}Q$, and validity of~(iii) and~(iv).}

In this second part we define $u_{AB}$ from the triangle $PP_{N-M}Q$ to the polygon in $\Delta$ whose boundary is $\arc{\u{PP}_{N-M}}\cup \u{P}_{N-M}\u{Q} \cup \u{QP}$. The definition is very simple, namely, for any $0\leq i< N-M$ we let $u_{AB}$ be the affine function sending the triangle $P_iP_{i+1}Q$ onto the triangle $\u{P}_i\u{P}_{i+1}\u{Q}$, as shown in Figure~\ref{Fig:step73}. We now have to check the validity of~(iii) and~(iv) in the triangle $PP_{N-M}Q$. Keeping in mind Lemma~\ref{liptriangle}, to show~(iii) it is enough to compare the lengths of $P_iP_{i+1}$ and $\u{P}_i\u{P}_{i+1}$, those of $P_{i+1}Q$ and $\u{P}_{i+1}\u{Q}$, and the angles $\angle{P_i}{P_{i+1}}{Q}$ and $\angle{\u{P}_i}{\u{P}_{i+1}}{\u{Q}}$.\par
\begin{figure}[htbp]
\begin{center}
\input{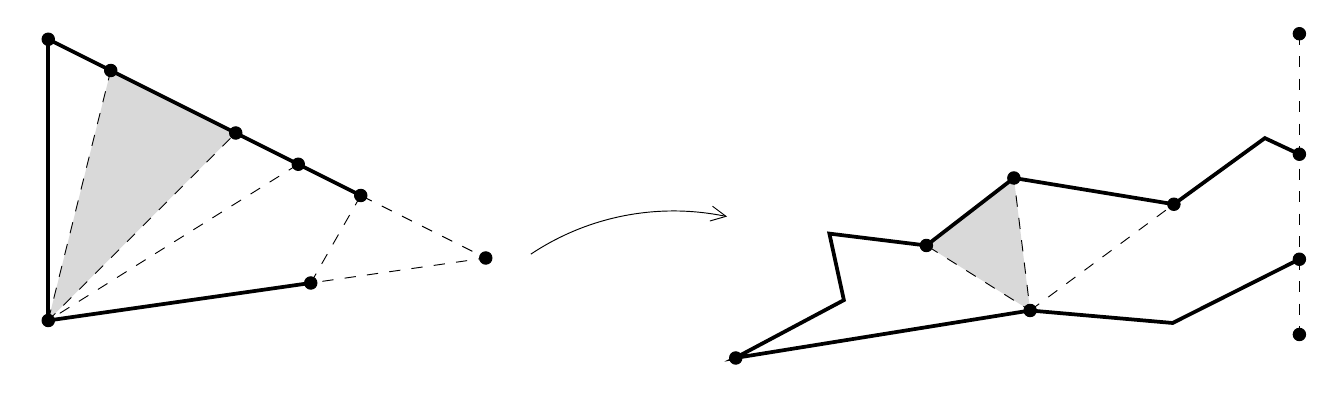_t}\vspace{-10pt}
\caption{The situation in Part~2.}\label{Fig:step73}
\end{center}
\end{figure}
We start recalling that~(iii) in Lemma~\ref{lemmastep4}, together with the Lipschitz property of $u$, ensures
\begin{equation}\label{143}
\frac{\ell\big(PQ\big)}{7L} \leq \ell\big( \u{P}_{i+1}\u{Q}\big) \leq \ell\big(\u{PQ}\big)\leq L\ell\big(PQ\big)
\end{equation}
(keep in mind that, since $P$ and $Q$ are consecutive vertices, then $PQ=\arc{PQ}$ and $\u{PQ}=\arc{\u{PQ}}$). Recalling now~(\ref{cond2}) of Lemma~\ref{lemmastep6} and~(\ref{deft_i}), we get
\begin{equation}\label{142}
t_{P,i+1} = t_{P,i+1} - t_{Q,0} = \frac{\tau( \u{P}_{i+1}) - \tau(\u{Q}_0)}{10L} \leq 17 \,\lsegm{PQ}\leq 17 L\,\ell\big(PQ\big)\,.
\end{equation}
We want now to estimate $\ell\big(P_{i+1}Q\big)$. To do so, let us assume, as in Figure~\ref{Fig:step73} and without loss of generality, that $P$ and $Q$ belong to the left side of the square $\D$ and that $P$ is above $Q$. Call also $V\equiv (-\frac 12,-\frac12)$ the southwest corner of $\D$, and let $\delta_x$ and $\delta_y$ be the horizontal and vertical components of the vector $P_{i+1} - Q$, so that
\[
\ell\big(P_{i+1}Q\big) = \sqrt{\delta_x^2 + \delta_y^2}\,.
\]
By construction one clearly has $\delta_x = t_{P,i+1}/2$. We claim that
\begin{equation}\label{141}
\frac {\sqrt{2}}2 \, \ell\big(PQ\big) \leq \ell\big(P_{i+1}Q\big) \leq \frac{90}7 \, L\, \ell\big(PQ\big)\,.
\end{equation}
In fact, since $P_{i+1}$ belongs to the segment $PO$, then one surely has
\[
\ell\big(P_{i+1}Q\big) \geq \ell\big(PQ\big) \sin\big(\angle OPV\big) \geq \frac{\sqrt{2}}{2} \, \ell\big(PQ\big)\,,
\]
so that the left inequality in~(\ref{141}) holds. To show the right inequality in~(\ref{141}), notice that
\[
\frac 34\,\pi \geq \angle{P_{i+1}}PQ = \angle OPV \geq \frac \pi 4\,,
\]
so that by an immediate geometric argument $|\delta_y | \leq \ell(PQ) + \delta_x$. Thus, by~(\ref{142})
\begin{equation}\label{copy}\begin{split}
\ell\big(P_{i+1}Q\big) &= \sqrt{\delta_x^2 + \delta_y^2}
\leq \sqrt{\bigg(\frac{t_{P,i+1}}2\bigg)^2+ \bigg(\frac{t_{P,i+1}}2+\ell\big(PQ\big)}\bigg)^2\\
&\leq \ell\big(PQ\big) \sqrt{\bigg(\frac{17}2\, L\bigg)^2+ \bigg(\frac{17}2\, L+1\bigg)^2 }
\leq \frac{90} 7\, L\, \ell\big(PQ\big)\,,
\end{split}\end{equation}
and so also the right inequality in~(\ref{141}) is established.\par
Keeping in mind~(\ref{143}), from~(\ref{141}) we obtain
\begin{equation}\label{firstside}
\frac {\sqrt{2}}{2L}  \leq \frac{\ell\big(P_{i+1}Q\big)}{\ell\big(\u{P}_{i+1}\u{Q}\big)} \leq 90 L^2\,.
\end{equation}
It is much easier to compare $\ell\big(P_iP_{i+1}\big)$ and $\ell\big(\u{P}_i\u{P}_{i+1}\big)$. Indeed, by immediate geometrical argument, recalling~(\ref{deft_i}), (\ref{deftau'}) and condition~(\ref{cond1}) of Lemma~\ref{lemmastep6}, and letting $\u{S}$ be any point in the interior of $\u{P}_i\u{P}_{i+1}$, one has
\[\begin{split}
\ell\big(P_iP_{i+1}\big)  &\leq \frac{\sqrt{2}} 2\, \big(t_{P,i+1}- t_{P,i}\big) 
= \frac{\sqrt{2}} {20L}\, \big(\tau(\u{P}_{i+1})- \tau(\u{P}_i)\big)
= \frac{\sqrt{2}} {20L}\, \tau'(\u{S})\,\ell\big(\u{P}_i\u{P}_{i+1}\big)\\
&\leq \frac{\sqrt{2}} {20L}\, \ell\big(\u{P}_i\u{P}_{i+1}\big)\,,
\end{split}\]
and analogously
\[\begin{split}
\ell\big(P_iP_{i+1}\big) \geq \frac{t_{P,i+1}- t_{P,i}} 2 
=\frac{\tau(\u{P}_{i+1})- \tau(\u{P}_i)} {20L} 
=\frac{\tau'(\u{S})} {20L}\, \ell\big(\u{P}_i\u{P}_{i+1}\big)
\geq \frac{1} {1200L^2}\, \ell\big(\u{P}_i\u{P}_{i+1}\big)\,.
\end{split}\]
Thus, we have
\begin{equation}\label{secondside}
\frac{1} {1200L^2} \leq \frac{\ell\big(P_iP_{i+1}\big)}{\ell\big(\u{P}_i\u{P}_{i+1}\big)} \leq  \frac{\sqrt{2}} {20L}\,.
\end{equation}
Let us finally compare the angles $\angle{P_i}{P_{i+1}}{Q}$ and $\angle{\u{P}_i}{\u{P}_{i+1}}{\u{Q}}$. Concerning $\angle{\u{P}_i}{\u{P}_{i+1}}{\u{Q}}$, it is enough to recall~(i) of Lemma~\ref{lemmastep4} to obtain
\begin{equation}\label{firstangle}
15^\circ \leq \angle{\u{P}_i}{\u{P}_{i+1}}{\u{Q}} \leq 165^\circ\,.
\end{equation}
On the other hand, concerning $\angle{P_i}{P_{i+1}}Q$, we start observing
\begin{equation}\label{estangesy}
\angle{P_i}{P_{i+1}}Q = \angle P{P_{i+1}}Q \leq \pi - \angle OPQ \leq \frac 34\, \pi\,.
\end{equation}
To obtain an estimate from below to $\angle{P_i}{P_{i+1}}Q$, instead, we call for brevity $\alpha:= \angle {P_i}{P_{i+1}}Q=\angle P{P_{i+1}}Q$ and $\theta:= \angle OPV - \frac \pi 2 \in \big[ -\pi/4, \pi/4\big)$, so that an immediate trigonometric argument gives
\begin{equation}\label{1nstep}
\ell\big( PQ \big) = \frac{t_{P,i+1}}{2} \Big( \tan(\theta+\alpha) -\tan \theta \Big)\,.
\end{equation}
We aim then to show that
\begin{equation}\label{cmpangles}
\alpha \geq \frac{1}{19 L}\,.
\end{equation}
In fact, if
\[
\theta+\alpha\geq \frac \pi 4 + \frac 1 {19}\,,
\]
then since $\theta\leq \pi/4$ we immediately deduce the validity of~(\ref{cmpangles}). On the contrary, if
\[
\theta+\alpha  < \frac \pi 4 + \frac 1{19}\,,
\]
then recalling~(\ref{1nstep}), the fact that $\theta\geq -\pi/4$, and (\ref{142}), we get
\[\begin{split}
\ell\big( PQ \big) &= \frac{t_{P,i+1}}{2} \Big( \tan(\theta+\alpha) -\tan \theta \Big)
\leq \frac{t_{P,i+1}}{2}\, \frac{\alpha}{\cos^2 \Big( \frac \pi 4 + \frac 1 {19}\Big)}
\leq \frac{17}{2}\, L\,\ell\big(PQ\big) \,\frac{\alpha}{\cos^2 \Big( \frac \pi 4 + \frac 1 {19}\Big)}\,,
\end{split}\]
from which it follows
\[
\alpha \geq \frac{2\cos^2 \Big( \frac \pi 4 + \frac 1 {19}\Big)}{17L} \geq \frac{1}{19L}\,,
\]
so that~(\ref{cmpangles}) is concluded. Putting it together with~(\ref{estangesy}), we deduce
\begin{equation}\label{secondangle}
\frac{1}{19 L} \leq \angle {P_i}{P_{i+1}}Q \leq \frac 34\,\pi\,.
\end{equation}
Finally we show the validity of~(iii), simply applying~(\ref{lipschtriangle}) of Lemma~\ref{liptriangle}. Indeed, let us call $\phi$ the affine map which sends the triangle $P_iP_{i+1}Q$ onto $\u{P}_i\u{P}_{i+1}\u{Q}$ and, for brevity and according with the notation of Lemma~\ref{liptriangle}, let us write
\begin{align*}
a&=\ell\big(P_{i+1} Q\big) \,, & 
b&=\ell\big(P_iP_{i+1}\big)\,, & 
\alpha&=\angle {P_i}{P_{i+1}}Q\,, \\
a'&=\ell\big(\u{P}_{i+1}\u{Q}\big)\,, &
b'&=\ell\big(\u{P}_i\u{P}_{i+1}\big)\,,& 
\alpha'&=\angle {\u{P}_i}{\u{P}_{i+1}}{\u{Q}} \,.
\end{align*}
Then, the estimates~(\ref{firstside}), (\ref{secondside}), (\ref{firstangle}) and~(\ref{secondangle}) can be rewritten as
\begin{align}\label{ratiosII}
\frac{\sqrt{2}}{2L}\leq \frac a{a'} \leq 90 L^2\,, && \frac{1}{1200 L^2}\leq \frac b{b'}\leq \frac{\sqrt{2}}{20L}\,, && \sin\alpha' \geq \frac 14\,, && \sin\alpha \geq \frac 1{20L}\,,
\end{align}
where for the last estimate we used that
\begin{equation}\label{verysame}
\sin \alpha \geq \sin \bigg( \frac{1}{19L}\bigg) = \frac{1}{19L} \,  \bigg(19L \sin \bigg( \frac{1}{19L}\bigg) \bigg)
\geq  \frac{1}{19L} \,  \bigg(19 \sin \bigg( \frac{1}{19}\bigg) \bigg)
\geq  \frac{1}{20L}\,.
\end{equation}
Therefore, (\ref{lipschtriangle}) and~(\ref{ratiosII}) give us
\[
{\rm Lip} (\phi)
\leq \frac{a'}a + \frac{2b'}{b\sin\alpha} + \frac{a'}{a\sin\alpha}
\leq \sqrt 2\, L + 48000L^3  + 20\sqrt 2\, L^2\,.
\]
On the other hand, exchanging the roles of the triangles, we get
\[
{\rm Lip} (\phi^{-1})
\leq\frac a{a'} + \frac{2b}{b'\sin\alpha'} +\frac{a}{a'\sin\alpha'}
\leq 90 L^2 + \frac{2 \sqrt 2}{5 L}  + 360 L^2\,.
\]
\par

To conclude this part, we want to check~(iv) for the pairs of consecutive vertices $P,\,Q$ such that the side $P_NQ_M$ is in the triangle $PP_{N-M}Q$. Notice that this happens only when $M=0$, or in other words, if $Q\equiv A$ or $Q\equiv B$. Let us then assume that $Q$ is either $A$ or $B$, and let us show that~(iv) holds, that is,
\begin{align}\label{nitsch2}
\angle Q {P_N} O \geq \frac{1}{87L}\,, &&
\angle {P_N} Q O \geq \frac{1}{87L}\,.
\end{align}
Taking $i=N-1$ and applying the second inequality in~(\ref{secondangle}), we immediately find
\[
\angle Q{P_N} O = \pi - \angle {P_{N-1}} {P_N} Q \geq \frac \pi 4 > \frac 1{87L}\,.
\]
In the same way, applying the first inequality in~(\ref{secondangle}) and recalling Remark~\ref{angle<1}, one has
\[
\angle {P_N} Q O = \pi - \angle Q {P_N} O - \angle Q O {P_N}
= \angle {P_{N-1}} {P_N} Q - \angle P O Q \geq \frac 1{19L} - \frac{1}{50L} > \frac 1{87L}\,.
\]
Hence, (\ref{nitsch2}) is checked.

\part{3}{Definition of $u_{AB}$ in the quadrilateral $P_{N-M}P_NQ_MQ$, and validity of~(iii) and~(iv).}

The definition is again trivial: we take any $N-M \leq i<N$ and, setting $j=i-N+M\in [0,M)$, we have to send the quadrilateral $P_iP_{i+1}Q_{j+1}Q_j$ on the quadrilateral $\u{P}_i\u{P}_{i+1}\u{Q}_{j+1}\u{Q}_j$. To do so, we send the triangle $P_iP_{i+1}Q_{j+1}$ (resp. $Q_{j+1}Q_jP_i$) onto the triangle $\u{P}_i\u{P}_{i+1}\u{Q}_{j+1}$ (resp. $\u{Q}_{j+1}\u{Q}_j\u{P}_i$) in the bijective affine way, as depicted in Figure~\ref{Fig:step74}. Then, we have to check the validity of~(iii) and~(iv). As in Part~2, checking~(iii) basically relies, thanks to Lemma~\ref{liptriangle}, on a comparison between the lengths of the corresponding sides and between the corresponding angles. The argument will be very similar to that already used in Part~II, but for the sake of clarity we are going to underline all the changes in the proof.\par
\begin{figure}[htbp]
\begin{center}
\input{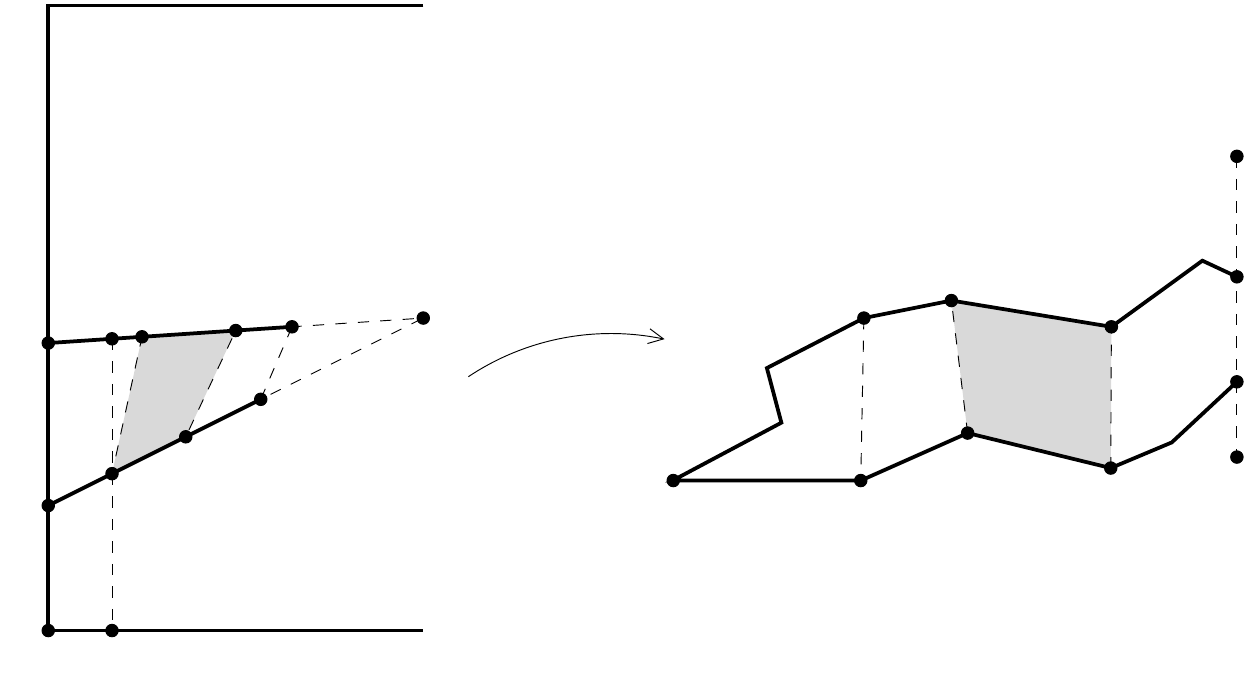_t}\vspace{-10pt}
\caption{The situation in Part~3.}\label{Fig:step74}
\end{center}
\end{figure}
First of all, the argument leading to~(\ref{secondside}) can be \emph{verbatim} repeated for both the segments $P_iP_{i+1}$ and $Q_jQ_{j+1}$, leading to
\begin{align}\label{nsides1}
\frac{1} {1200L^2} \leq \frac{\ell\big(P_iP_{i+1}\big)}{\ell\big(\u{P}_i\u{P}_{i+1}\big)} \leq  \frac{\sqrt{2}} {20L} \,, &&
\frac{1} {1200L^2} \leq \frac{\ell\big(Q_jQ_{j+1}\big)}{\ell\big(\u{Q}_j\u{Q}_{j+1}\big)} \leq  \frac{\sqrt{2}} {20L}\,.
\end{align}
The argument that we used in Part~2 to bound the length of the segment $P_{i+1}Q$ works, with minor modifications, to estimate the lengths of $P_iQ_j$ and $P_{i+1}Q_{j+1}$. Let us do it in detail for $P_iQ_j$, being the case of $P_{i+1}Q_{j+1}$ exactly the same. First of all, assuming without loss of generality that $P$ and $Q$ lie on the left side of $\D$, and that $P$ is above $Q$, let us call $x_j\in (-1/2,-1/10)$ the first coordinate of $Q_j$, set $V_j\equiv (x_j,-1/2)$, $V\equiv (-1/2,-1/2)$, and define $P_\perp$ the point of the segment $OP$ having first coordinate equal to $x_j$.\par
As in~(\ref{142}), then, we obtain
\begin{align}\label{142bis}
|t_{P,i} - t_{Q,j} |\leq 17 L \,\ell\big(PQ\big)\,, &&  | t_{P,i+1} - t_{Q,j+1} | \leq 17 L \,\ell\big(PQ\big)\,.
\end{align}
We claim that
\begin{equation}\label{10not2}
\frac {\sqrt{2}}{10} \, \ell\big(PQ\big) \leq \ell\big(P_iQ_j\big) \leq \frac{90}{7}\, L\, \ell\big(PQ\big)\,.
\end{equation}
--notice the presence of $\sqrt{2}/10$ in the left hand side, while there was $\sqrt{2}/2$ in the corresponding term in~(\ref{141}). To show the left inequality in~(\ref{10not2}) we start observing that, being $P_i$ in $OP$, one has
\[
\ell\big(P_iQ_j\big) \geq \ell\big(P_\perp Q_j\big) \sin\big(\angle O{P_\perp}{Q_j}\big) 
=\ell\big(P_\perp Q_j\big) \sin\big(\angle OPV\big) 
\geq \frac{\sqrt{2}}{2} \, \ell\big(P_\perp Q_j\big)\,.
\]
Moreover, the segment $P_\perp Q_j$ is parallel to $PQ$, then~(\ref{under45}) immediately gives $\ell\big(P_\perp Q_j\big)\geq \ell\big(PQ\big)/5$. Hence, we get $\ell\big(P_iQ_j\big) \geq \frac{\sqrt{2}}{10}\ell\big(PQ\big)$, that is the left inequality of~(\ref{10not2}).\par
Let us now pass to the right inequality. To do so we call again $\delta_x$ and $\delta_y$ the horizontal and vertical components of $P_iQ_j$, so that $\ell(P_iQ_j)=\sqrt{\delta_x^2+\delta_y^2}$. Notice that by construction
\[
|\delta_x|=\frac{|t_{P,i}-t_{Q,j}|}2\leq \frac{17}2\, L\, \ell\big(PQ\big)\,.
\]
Moreover,
\[
\frac \pi 4 \leq \angle{P_i}{P_\perp}{Q_j} = \angle OPV \leq \frac 34\, \pi\,,
\]
hence $|\delta_y| \leq \ell(P_\perp Q_j) + |\delta_x|\leq \ell(PQ)+|\delta_x|$. As a consequence, exactly as in~(\ref{copy}) we get, using~(\ref{142bis}),
\[\begin{split}
\ell\big(P_iQ_j\big) &= \sqrt{\delta_x^2 + \delta_y^2}
\leq \ell\big(PQ\big) \sqrt{\bigg(\frac{17}2\, L\bigg)^2+ \bigg(\frac{17}2\, L+1\bigg)^2 }
\leq \frac{90} 7\, L\, \ell\big(PQ\big)\,.
\end{split}\]
Thus, (\ref{10not2}) is proved. Since~(iii) of Lemma~\ref{lemmastep4} gives
\[
\frac{\ell\big(PQ\big)}{7L} \leq \ell\big(\u{P}_i\u{Q}_j\big) \leq \lsegm{PQ}\leq L\,\ell\big(PQ\big)\,,
\]
from~(\ref{10not2}) we immediately obtain
\begin{equation}\label{nsides2}
\frac {\sqrt{2}}{10 L}  \leq \frac{\ell\big(P_iQ_j\big)}{\ell\big(\u{P}_i\u{Q}_j\big)} \leq 90 L^2\,.
\end{equation}
The same argument, exchanging $i$ and $j$ with $i+1$ and $j+1$ respectively, gives also
\begin{equation}\label{nsides3}
\frac {\sqrt{2}}{10 L}  \leq \frac{\ell\big(P_{i+1}Q_{j+1}\big)}{\ell\big(\u{P}_{i+1}\u{Q}_{j+1}\big)} \leq 90 L^2\,.
\end{equation}
We now have to consider the angles $\angle{P_i}{P_{i+1}}{Q_{j+1}}$, $\angle{Q_{j+1}}{Q_j}{P_i}$ and their correspondent ones in $\Delta$.
\begin{figure}[htbp]
\begin{center}
\input{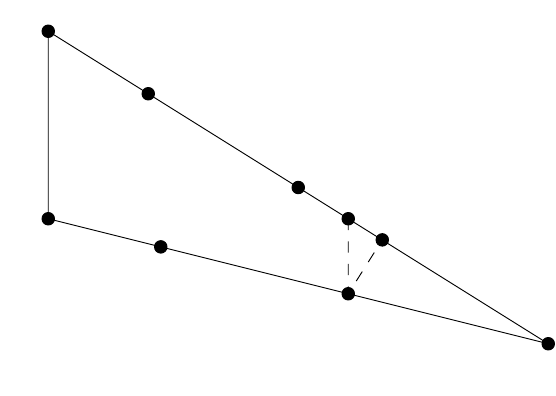_t}\vspace{-10pt}
\caption{Position of the points $P_i$, $P_{i+1}$, $Q_j$, $Q_{j+1}$, $P^\perp$ and $P'$.}\label{Fig:step75}
\end{center}
\end{figure}
By Lemma~\ref{lemmastep4} (i), we already know that
\begin{align}\label{finalangles}
15^\circ \leq \angle{\u{P}_i}{\u{P}_{i+1}}{\u{Q}_{j+1}} \leq 165^\circ\,, &&
\sin\Big(\angle{\u{Q}_{j+1}}{\u{Q}_j}{\u{P}_i}\Big) \geq \frac{1}{6L^2}\,.
\end{align}
As in Figure~\ref{Fig:step75}, let us then call $P'$ the orthogonal projection of $Q_{j+1}$ on the segment $OP$, and $P^\perp$ the point of the segment $OP$ with the same first coordinate as $Q_{j+1}$. Assume for a moment that, as in the figure, $P'$ does not belong to $PP_{i+1}$. By~(\ref{142bis}) and by~(\ref{under45}) we have
\begin{equation}\label{somest}\begin{aligned}
\ell\big(P_{i+1}P^\perp\big)&=\frac{|t_{P,i+1}-t_{Q,j+1}|}{2\sin\big(\angle OPQ\big)}\leq\frac{\sqrt{2}}{2}\,17L\,\ell\big(PQ\big)\,,&
\ell\big( P^\perp Q_{j+1}\big) &\geq \frac{\ell\big(PQ\big)} 5\,, \\
\ell\big(P^\perp P'\big)&= \ell\big( P^\perp Q_{j+1}\big)\cos\big(\angle OPQ\big)\,, &
\ell\big(Q_{j+1}P'\big)&=\ell\big( P^\perp Q_{j+1}\big)\sin\big(\angle OPQ\big)\,.\hspace{-20pt}
\end{aligned}\end{equation}
Therefore, we can evaluate
\[\begin{split}
\tan\big(\angle{P'}{P_{i+1}}{Q_{j+1}}\big) &= \frac{\ell\big(Q_{j+1}P'\big)}{\ell\big(P_{i+1}P'\big)}
\geq \frac{\ell\big(Q_{j+1}P'\big)}{\ell\big(P^\perp P'\big)+\ell\big(P_{i+1}P^\perp\big)}
\geq\frac{\frac{\sqrt{2}}2\ell\big(P^\perp Q_{j+1}\big)}{\frac{\sqrt{2}}2\ell\big(P^\perp Q_{j+1}\big)+\frac{\sqrt{2}}{2}17L\,\ell\big(PQ\big)}\\
&= \frac{\ell\big(P^\perp Q_{j+1}\big)}{\ell\big(P^\perp Q_{j+1}\big)+ 17 L\,\ell\big(PQ\big)}
\geq \frac 1{86L}\,,
\end{split}\]
which immediately gives
\begin{equation}\label{ii+1j+1}
\angle{P_i}{P_{i+1}}{Q_{j+1}} = \pi - \angle{P'}{P_{i+1}}{Q_{j+1}} \leq \pi - \arctan\bigg(\frac{1}{86L}\bigg)\,.
\end{equation}
Notice that, if $P'$ belongs to $PP_{i+1}$, then $\angle{P_i}{P_{i+1}}{Q_{j+1}}\leq \pi/2$, so~(\ref{ii+1j+1}) holds \emph{a fortiori} true.\par
We claim that one also has
\begin{equation}\label{ii+1j+1bel}
\angle{P_i}{P_{i+1}}{Q_{j+1}} \geq \frac{1}{87 L}\,.
\end{equation}
To show this, we are going to argue in a very similar way to what already done in Part~2. In fact, if $t_{P,i+1}\leq t_{Q,j+1}$ then~(\ref{ii+1j+1bel}) trivially holds true. Assuming, on the contrary, that $t_{P,i+1}>t_{Q,j+1}$, we call for brevity $\alpha:=\angle{P_i}{P_{i+1}}{Q_{j+1}}$ and $\theta:=\angle{O}{P^\perp}{Q_{j+1}}-\frac \pi 2 \in \big[ -\frac \pi 4, \frac \pi 4\big)$, and we notice that an immediate trigonometric argument gives
\begin{equation}\label{1nstepbis}
\ell\big( P^\perp Q_{j+1} \big) = \frac{t_{P,i+1}-t_{Q,j+1}}{2} \Big( \tan(\theta+\alpha) -\tan \theta \Big)\,.
\end{equation}
We can assume that
\[
\theta+\alpha\leq \frac \pi 4 + \frac 1{87}\,,
\]
since otherwise~(\ref{ii+1j+1bel}) is already established. Hence, recalling~(\ref{somest}), (\ref{1nstepbis}), the fact that $\theta\geq -\pi/4$, (\ref{142bis}) and the Lipschitz property of $u$ we get
\[\begin{split}
\ell\big( PQ \big) & \leq 5 \,\ell\big(P^\perp Q_{j+1}\big)
= \frac 52 \,\big(t_{P,i+1}-t_{Q,j+1}\big) \Big( \tan(\theta+\alpha) -\tan \theta \Big)
\leq \frac {85 L \,\ell\big(PQ\big)}2 \, \frac{\alpha}{\cos^2 \Big( \frac \pi 4 + \frac 1 {87}\Big)}\,,
\end{split}\]
which implies
\[
\alpha \geq \frac{2\cos^2 \Big( \frac \pi 4 + \frac 1 {87}\Big)}{85L}\geq \frac{1}{87L}\,.
\]
Thus, (\ref{ii+1j+1bel}) is now established. If we repeat exactly the same argument that we used to obtain~(\ref{ii+1j+1}) and~(\ref{ii+1j+1bel}) in the symmetric way, that is, substituting $P_i$, $P_{i+1}$ and $Q_{j+1}$ with $Q_{j+1}$, $Q_j$ and $P_i$ respectively, then we get
\begin{align}\label{j+1ji}
\angle{Q_{j+1}}{Q_j}{P_i} \geq \arctan\bigg(\frac{1}{86L}\bigg) \,, &&
\angle{Q_{j+1}}{Q_j}{P_i} \leq \pi -  \frac{1}{87 L}\,.
\end{align}
We are finally in position to check the validity of~(iii) by making use of~(\ref{lipschtriangle}) of Lemma~\ref{liptriangle}. Indeed, let us call $\phi$ (resp. $\tilde\phi$) the affine map which send $P_iP_{i+1}Q_{j+1}$ on $\u{P}_i\u{P}_{i+1}\u{Q}_{j+1}$ (resp. $Q_{j+1}Q_jP_i$ on $\u{Q}_{j+1}\u{Q}_j\u{P}_i$). According with the notation of Lemma~\ref{liptriangle}, let us write
\begin{align*}
a&=\ell\big(P_{i+1} Q_{j+1}\big) \,, & 
b&=\ell\big(P_iP_{i+1}\big)\,, & 
\alpha&=\angle {P_i}{P_{i+1}}{Q_{j+1}}\,, \\
a'&=\ell\big(\u{P}_{i+1}\u{Q}_{j+1}\big)\,, &
b'&=\ell\big(\u{P}_i\u{P}_{i+1}\big)\,,& 
\alpha'&=\angle {\u{P}_i}{\u{P}_{i+1}}{\u{Q}_{j+1}} \,,\\
\tilde a&=\ell\big(P_i Q_j\big) \,, & 
\tilde b&=\ell\big(Q_jQ_{j+1}\big)\,, & 
\tilde \alpha&=\angle {Q_{j+1}}{Q_j}{P_i}\,, \\
\tilde a'&=\ell\big(\u{P}_i\u{Q}_j\big)\,, &
\tilde b'&=\ell\big(\u{Q}_j\u{Q}_{j+1}\big)\,,& 
\tilde \alpha'&=\angle {\u{Q}_{j+1}}{\u{Q}_j}{\u{P}_i} \,.
\end{align*}
The estimates~(\ref{nsides1}), (\ref{nsides2}) and~(\ref{nsides3}) for the sides, and~(\ref{finalangles}), (\ref{ii+1j+1}), (\ref{ii+1j+1bel}) and~(\ref{j+1ji}) for the angles, give us
\begin{align}
\frac{\sqrt{2}}{10L}\leq \frac a{a'} \leq 90 L^2\,, && \frac{1}{1200 L^2}\leq \frac b{b'}\leq \frac{\sqrt{2}}{20L}\,, && \sin\alpha' \geq \frac 14\,, && \sin\alpha \geq \frac 1{88L}\,, \label{ratiosIII}\\
\frac{\sqrt{2}}{10L}\leq \frac {\tilde a}{\tilde a'} \leq 90 L^2\,, && \frac{1}{1200 L^2}\leq \frac {\tilde b}{\tilde b'}\leq \frac{\sqrt{2}}{20L}\,, && \sin \tilde \alpha' \geq \frac 1{6L^2}\,, && \sin\tilde \alpha \geq \frac 1{88 L}\,,\label{ratiosIIIb}
\end{align}
where the estimates for $\alpha$ and $\tilde\alpha$ can be obtained in the very same way as~(\ref{verysame}). As in Part~2, then, we can apply~(\ref{lipschtriangle}) together with~(\ref{ratiosIII}) and~(\ref{ratiosIIIb}) to obtain
\[\begin{split}
{\rm Lip} (\phi)
&\leq\frac {a'}a + \frac{2b'}{b\sin\alpha} + \frac{a'}{a\sin\alpha}
\leq 5\sqrt 2\, L + 211200L^3  + 440\sqrt 2\, L^2\,,\\
{\rm Lip} (\phi^{-1})
&\leq\frac a{a'} + \frac{2b}{b'\sin\alpha'} +\frac{a}{a'\sin\alpha'}
\leq 90 L^2 + \frac{2\sqrt 2}{5 L}  + 360 \, L^2\,,\\
{\rm Lip} (\tilde\phi)
&\leq\frac {\tilde a'}{\tilde a} + \frac{2\tilde b'}{\tilde b\sin\tilde\alpha} + \frac{\tilde a'}{\tilde a\sin\tilde \alpha}
\leq 5\sqrt 2\, L + 211200L^3  + 440\sqrt 2\, L^2\,,\\
{\rm Lip} (\tilde\phi^{-1})
&\leq\frac {\tilde a}{\tilde a'} + \frac{2\tilde b}{\tilde b'\sin\tilde \alpha'} +\frac{\tilde a}{\tilde a'\sin\tilde \alpha'}
\leq 90 L^2 + \frac{3\sqrt 2 L}5  + 540\, L^4 \,.
\end{split}\]
Thus, we have checked the validity of~(iii).\par
Concerning~(iv), we have to show that
\begin{align}\label{nitsch}
\angle{P_N}{Q_M}{O}\geq \frac{1}{87L}\,, &&
\angle{Q_M}{P_N}{O}\geq \frac{1}{87L}\,.
\end{align}
In fact, applying~(\ref{ii+1j+1}) with $i=N-1$ and then $j=M-1$, we have that
\[
\angle{Q_M}{P_N}{O} = \pi - \angle{P_{N-1}}{P_N}{Q_M}  \geq \arctan\bigg(\frac{1}{86L}\bigg) \geq \frac{1}{87L}\,,
\]
and the same argument, exchanging the roles of $P_N$ and $Q_M$, ensures that also $\angle{P_N}{Q_M}{O}\geq 1/(87L)$, thus proving the validity of~(\ref{nitsch}). Property~(iv) is then established and the proof is concluded.
\end{proof}

\bigstep{VIII}{Definition of the piecewise affine extension $v$}

We finally come to the explicit definition of the piecewise affine map $v$. It is important to recall now Lemma~\ref{lemma:center} of Step~I. It provides us with a central ball $\widehat{\u\B}\subseteq\Delta$ which is such that the intersection of its boundary with $\partial\Delta$ consists of $N$ points $\u{A}_1,\, \u{A}_2,\, \dots \, ,\, \u{A}_N$, with $N\geq 2$. Moreover, for each $1\leq i\leq N$ one has that the path $\arc{A_iA_{i+1}}$ does not contain other points $A_j$ with $j\neq i, \, i+1$. Or, in other words, that for each $1\leq i \leq N$ the anticlockwise path connecting $A_i$ and $A_{i+1}$ on $\partial\D$ has length at most $2$ (keep in mind Remark~\ref{rem-check}). Notice that this implies, in the case $N=2$, that the points $A_1$ and $
A_2$ are opposite points of $\partial\D$. The set $\Delta$ is then subdivided in $N$ primary sectors $\DoubleS(\u{A}_i\u{A}_{i+1})$, plus the remaining polygon $\Pi$ (see e.g. Figure~\ref{Fig:23}, where $\Pi$ is a coloured quadrilateral).\par\smallskip

Moreover, thanks to Step~VII, we have $N$ disjoint polygonal subsets $\D_i$ as in the Figure, and $N$ extensions $u_i:\D_i\to \DoubleS(\u{A}_i\u{A}_{i+1})$. It is then easy to guess a possible definition of $v$, that is setting $v\equiv u_i$ on each $\D_i$ and then sending in the obvious piecewise affine way the set $\D\setminus \cup_i \D_i$ (dark in the figure) into the polygon $\Pi$, defining $u(O)$ as the center of $\widehat{\u \B}$. Unfortunately, this strategy does not always work. For instance, if $N=2$, then $\Pi$ is a degenerate empty polygon, thus it cannot be the bi-Lipschitz image of the non-empty region $\D\setminus \cup_i \D_i$. Also for $N\geq 3$, it may happen that the polygon $\Pi$ does not contain the center of $\widehat{\u\B}$, which is instead inside some sector $\DoubleS(\u{A}_i\u{A}_{i+1})$. In that case, obviously, the center of $\widehat{\u\B}$ can not be the point $u(O)$. Having these possibilities in mind, we are now ready to give the proof of the first part of Theorem~\mref{main}, that is, the existence of the piecewise affine extension $v$ of $u$.
\begin{figure}[htbp]
\begin{center}
\input{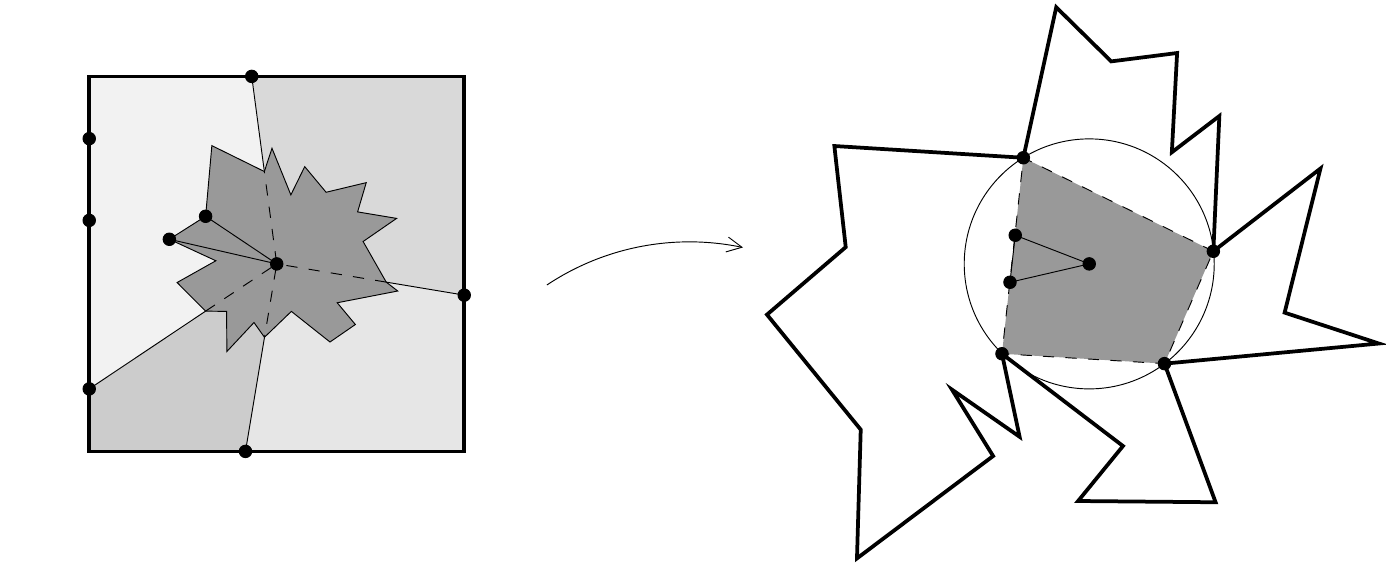_t}\vspace{-10pt}
\caption{The sets $\D_i$ in $\D$ and the set $\Pi$ in $\Delta$.}\label{Fig:23}
\end{center}
\end{figure}
\proofof{Theorem~\mref{main} (piecewise affine extension)}
We need to consider three possible situations. To distinguish between them, let us start with a definition. For any $1\leq i \leq N$, we call $d_i$ the signed distance between the segment $\u{A}_i\u{A}_{i+1}$ and the center of $\widehat{\u{\B}}$, where the sign is positive if the center does not belong to $\DoubleS(\u{A}_i\u{A}_{i+1})$, and negative otherwise --for instance, in the situation of Figure~\ref{Fig:23} all the distances $d_i$ are positive. Let us also call $r$ the radius of $\widehat{\u{\B}}$, and observe that
\begin{equation}\label{estimr}
\frac{2}{3L} \leq r \leq \frac{2L}{\pi}\,.
\end{equation}
The first inequality has been already pointed out in Remark~\ref{rem-check}. Concerning the second one, it immediately follows by observing that the perimeter of $\Delta$ is at least $2\pi r$ by geometric reasons, and on the other hand it is less than $4L$ since it is the $L-$Lipschitz image of the square $\D$ which has perimeter $4$. We can then give our proof in the different cases.

\case{A}{For each $1\leq i \leq N$, one has $d_i \geq r/4$.}
This is the simplest of the three cases, and the situation is already shown in Figure~\ref{Fig:23}. We start by calling $\u{O}$ the center of $\widehat{\u\B}$. Then, for all $1\leq i \leq N$, let us define $v\equiv u_i$ on $\D_i$. We have now to send $\D\setminus \cup_i \D_i$ into $\Pi$. In order to do so, consider all the vertices $P_j$ of $\partial\D$. For each vertex $P_j$, which belongs to some set $\D_i$ for a suitable $i=i(j)$, there exists a point $Q_j$, which is the last point of the segment $P_jO$ which belongs to $\partial\D_i$. In fact, the segment $P_jO$ intersects $\partial \D_i$ only at $P_j$ and at $Q_j$, and the two points are the same if and only if $P_j\equiv A_i$ or $P_j\equiv A_{i+1}$. By the construction of Step~VII, we know that $v(Q_j)=(\u{P}_j)_{N(P_j)}$, and we will write for brevity $\u{Q}_j := (\u{P}_j)_{N(P_j)}$. Notice now that $\D\setminus\cup_i \D_i$ is the union of the triangles $Q_jQ_{j+1}O$, and on the other hand $\Pi$ is the union of the triangles $\u{Q}_j\u{Q}_{j+1}\u{O}$. We then conclude our definition of $v$ by imposing that $v$ sends in the affine way each triangle $Q_jQ_{j+1}O$ onto the triangle $\u{Q}_j\u{Q}_{j+1}\u{O}$. Hence, it is clear that $v$ is a piecewise affine homeomorphism between $\D$ and $\Delta$, which extends the original function $u$. Thus, to finish the proof we only have to check that $v$ is bi-Lipschitz with the right constant. Since this is already ensured by Lemma~\ref{lemmastep7} on each primary sector, it remains now only to consider a single triangle $Q_jQ_{j+1}O$. Using again Lemma~\ref{liptriangle} from Step~VII to estimate the bi-Lipschitz constant of the affine map on the triangle, we have to give upper and lower bounds for the quantities
\begin{align*}
a&=\ell\big(Q_jQ_{j+1}\big)\,, & 
b&=\ell\big(Q_j O\big) \,, & 
\alpha&=\angle O{Q_j}{Q_{j+1}}\,, \\
a'&=\ell\big(\u{Q}_j\u{Q}_{j+1}\big)\,,& 
b'&=\ell\big(\u{Q}_j\u{O}\big)\,, &
\alpha'&=\angle {\u{O}}{\u{Q}_j}{\u{Q}_{j+1}} \,.
\end{align*}
Let us then collect all the needed estimates: first of all, notice that the ratio $a/a'$ has already been evaluated in Lemma~\ref{lemmastep7}, either in Part~2 or in Part~3. Thus, recalling~(\ref{ratiosII}) and~(\ref{ratiosIII}), we already know that
\begin{equation}\label{8side11}
\frac{\sqrt{2}}{10L}\leq \frac a{a'} \leq 90 L^2\,.
\end{equation}
Concerning the ratio $b/b'$, notice that by geometric reasons and recalling~(\ref{under45}), we have
\begin{equation}\label{stimab}
\frac 1 {10} \leq b \leq \frac {\sqrt{2}}2\,,
\end{equation}
while by~(\ref{estimr}) and the assumption of this case
\begin{equation}\label{stimab'}
\frac 1{6L} \leq \frac r4 \leq b' \leq r \leq \frac{2L}\pi\,.
\end{equation}
Thus,
\begin{equation}\label{8side12}
\frac \pi{20L} \leq \frac b{b'} \leq 3\sqrt 2 L\,.
\end{equation}
Let us finally consider the angles $\alpha$ and $\alpha'$. Concerning $\alpha$, property~(iv) of Lemma~\ref{lemmastep7} tells us that
\begin{equation}\label{8angle10}
\frac 1{87L} \leq \alpha \leq \pi - \frac 1{87L}\,.
\end{equation}
On the other hand, by the assumption of this case we clearly have
\[
\arcsin  \frac 14 \leq \alpha' \leq \pi - \arcsin\frac 14\,,
\]
and then
\begin{align}\label{8angle1}
\frac{1}{\sin\alpha} \leq 88L\,, &&
\frac{1}{\sin\alpha'} \leq 4 \,.
\end{align}
We can then apply~(\ref{lipschtriangle}) making use of~(\ref{8side11}), (\ref{8side12}) and~(\ref{8angle1}) to get
\[\begin{split}
{\rm Lip} (\phi)
&\leq\frac {a'}a + \frac{2b'}{b\sin\alpha} + \frac{a'}{a\sin\alpha}
\leq 5\sqrt 2 L + \frac{3520}\pi\,L^2 + 440\sqrt 2 L^2\,,\\
{\rm Lip} (\phi^{-1})
&\leq\frac a{a'} + \frac{2b}{b'\sin\alpha'} +\frac{a}{a'\sin\alpha'}
\leq 90 L^2 + 24\sqrt 2 L+360L^2\,,
\end{split}\]
thus the claim of the theorem is obtained in this first case.

\case{B}{There exists some $1\leq i \leq N$ such that $-r/2 \leq d_i < r/4$.}
Also in this case, we set $u(O)=\u{O}$ to be the center of $\widehat{\u{\B}}$. Let us write now $\D= \cup_i \A_i$, where each $\A_i$ is the subset of $\D$ whose boundary is $A_iO \cup A_{i+1}O \cup \arc{A_iA_{i+1}}$. Notice that for each $i$, one has $\D_i\subseteq \A_i$, and in particular we set $\I_i = \A_i \setminus \D_i$, the ``internal part'' of $\A_i$. Our definition of $v$ will be done in such a way that, for each $1\leq i\leq N$, $v(\A_i)$ will be the union of the sector $\DoubleS(\u{A}_i\u{A}_{i+1})$ and the triangle $\u{A}_i\u{A}_{i+1}\u{O}$. Observe that, in the Case~A, we had defined $v$ so that for each $i$ one had $v(\D_i) = \DoubleS(\u{A}_i\u{A}_{i+1})$ and $v(\I_i)=\u{A}_i\u{A}_{i+1}\u{O}$.\par
Let us fix a given $1\leq i\leq N$, and notice that either $d_i \geq r/4$, or $-r/2 \leq d_i <r/4$. In fact, since we assume the existence of some $i$ for which $-r/2 \leq d_i < r/4$, then it is not possible that there exists some other $i$ with $d_i <-r/2$.\par
If $d_i\geq r/4$, then we define $v$ exactly as in Case~A, that is, we set $v\equiv u_i$ on $\D_i$, and for any two consecutive vertices $P_j,\, P_{j+1} \in \arc{A_iA_{i+1}}$ we let $v$ be the affine function transporting the triangle $Q_jQ_{j+1}O$ of $\D$ onto the triangle $\u{Q}_j\u{Q}_{j+1}\u{O}$ of $\Delta$, where $\u{Q}_k=\u{P}_{N(P_k)}$. In this case, $v$ is bi-Lipschitz on $\A_i$ with constant at most $5\sqrt 2 L + 3520 L^2/\pi  + 440\sqrt 2 L^2$, as we already showed in Case~A.\par
\begin{figure}[htbp]
\begin{center}
\input{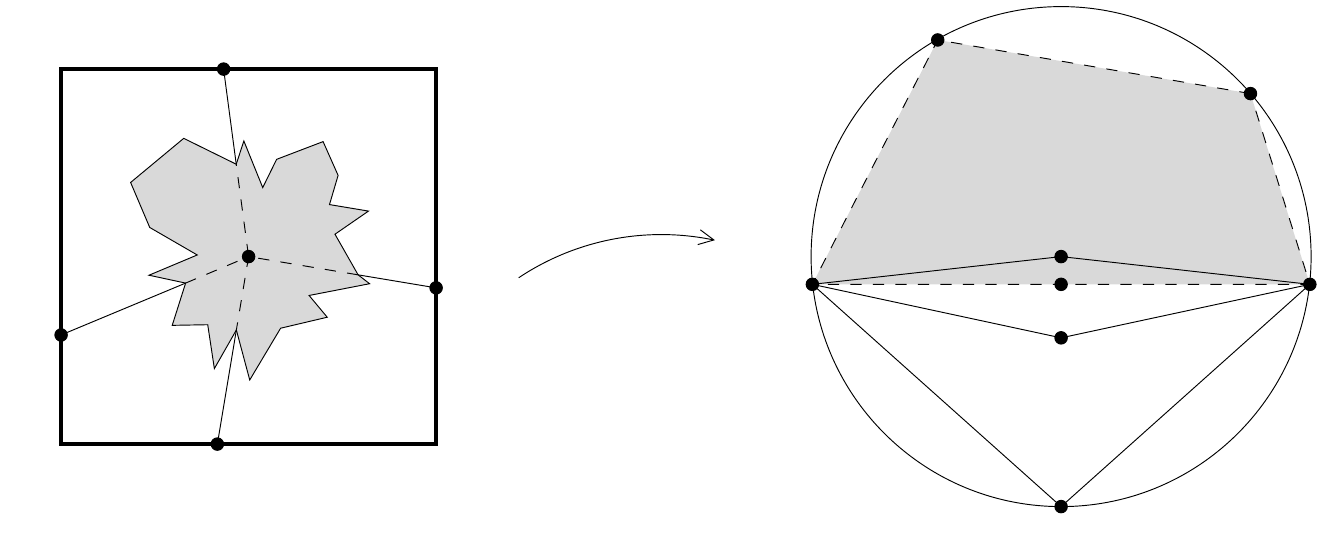_t}\vspace{-10pt}
\caption{The situation for Case~B, with the sets $\A_i$ and the points $\u{M}$, $\u{D}$ and $\u{C}$.}\label{Fig:24}
\end{center}
\end{figure}
Consider then the case of an index $i$ such that $-r/4 \leq d_i\leq r/4$, as it happens for $i=2$ in Figure~\ref{Fig:24} (where $d_2$ is positive but smaller than $r/4$). As in the figure, let us call $\u{C}\in \partial \widehat{\u{\B}}$ the point belonging to the axis of the segment $\u{A}_i\u{A}_{i+1}$ and to the sector $\DoubleS(\u{A}_i\u{A}_{i+1})$, and let also $\u{D}\in \u{OC}$ be the point such that $\lsegm{OD}=r/4$. We now introduce a bi-Lipschitz and piecewise affine function $\Phi:\u{A}_i\u{A}_{i+1}\u{C} \to \u{A}_i\u{DA}_{i+1}\u{C}$. If we call $\u{M}$ the mid-point of $\u{A}_i\u{A}_{i+1}$, the function $\Phi$ is simply given by the affine map between the triangle $\u{A}_i\u{MC}$ and $\u{A}_i\u{DC}$, and by the affine map between $\u{A}_{i+1}\u{MC}$ and $\u{A}_{i+1}\u{DC}$. The fact that $\Phi$ is piecewise affine is clear, being $\Phi$ defined gluing two affine maps. Moreover, by the fact that $-r/2 \leq d_i< r/4$, $\Phi$ is $2-$Lipschitz and $\Phi^{-1}$ is $3-$Lipschitz. We will extend $\Phi: \DoubleS(\u{A}_i\u{A}_{i+1})\to \DoubleS(\u{A}_i\u{A}_{i+1})$, whitout need of changing the name, as the identity out of the triangle $\u{A}_i\u{A}_{i+1}\u{C}$. Of course also the extended $\Phi$ is $2-$Lipschitz and its inverse is $3-$Lipschitz.\par
We are now ready to define $v$ in $\A_i$. First of all, we set $v \equiv \Phi \circ u_i$ on $\D_i$. Thanks to Lemma~\ref{lemmastep7} and the properties of Lipschitz functions, we have that $v$ is piecewise affine and bi-Lipschitz with constant $3\cdot 212000L^4=636000L^4$ on its image, which is $\DoubleS(\u{A}_i\u{A}_{i+1})\setminus \u{A}_i\u{A}_{i+1}\u{D}$. To conclude, we need to send $\I_i$ onto the quadrilater $\u{A}_i\u{OA}_{i+1}\u{D}$. To do so, consider all the vertices $P_j\in\arc{A_iA_j}$, and define $Q_j\in \partial\D_i$ as in Case~A. This time, we will not set $\u{Q}_j=u_i(Q_j)$: instead, $\u{Q}_j$ will be defined as $\u{Q}_j := \Phi\big(u_i(Q_j)\big)$, so that $v(Q_j)=\u{Q}_j$ as usual. Notice that, again, $\I_i$ is the union of the triangles $Q_jQ_{j+1}O$, while the quadrilateral $\u{A}_i\u{OA}_{i+1}\u{D}$ is the union of the triangles $\u{Q}_j\u{Q}_{j+1}\u{O}$ (up to the possible addition of a new vertex corresponding to $\u{D}$). The map $v$ on $\I_i$ will be then the map which sends each triangle $Q_jQ_{j+1}O$ onto $\u{Q}_j\u{Q}_{j+1}\u{O}$ in the affine way. Clearly the map $v$ is then a piecewise affine homeomorphism, so that again we only have to check its bi-Lipschitz constant (Figure~\ref{Fig:25} may help the reader to follow the construction). As usual, we will apply~(\ref{lipschtriangle}) of Lemma~\ref{liptriangle}, so we set the quantities
\begin{align*}
a&=\ell\big(Q_jQ_{j+1}\big)\,, & 
b&=\ell\big(Q_j O\big) \,, & 
\alpha&=\angle O{Q_j}{Q_{j+1}}\,, \\
a'&=\ell\big(\u{Q}_j\u{Q}_{j+1}\big)\,,& 
b'&=\ell\big(\u{Q}_j\u{O}\big)\,, &
\alpha'&=\angle {\u{O}}{\u{Q}_j}{\u{Q}_{j+1}} \,.
\end{align*}
\begin{figure}[htbp]
\begin{center}
\input{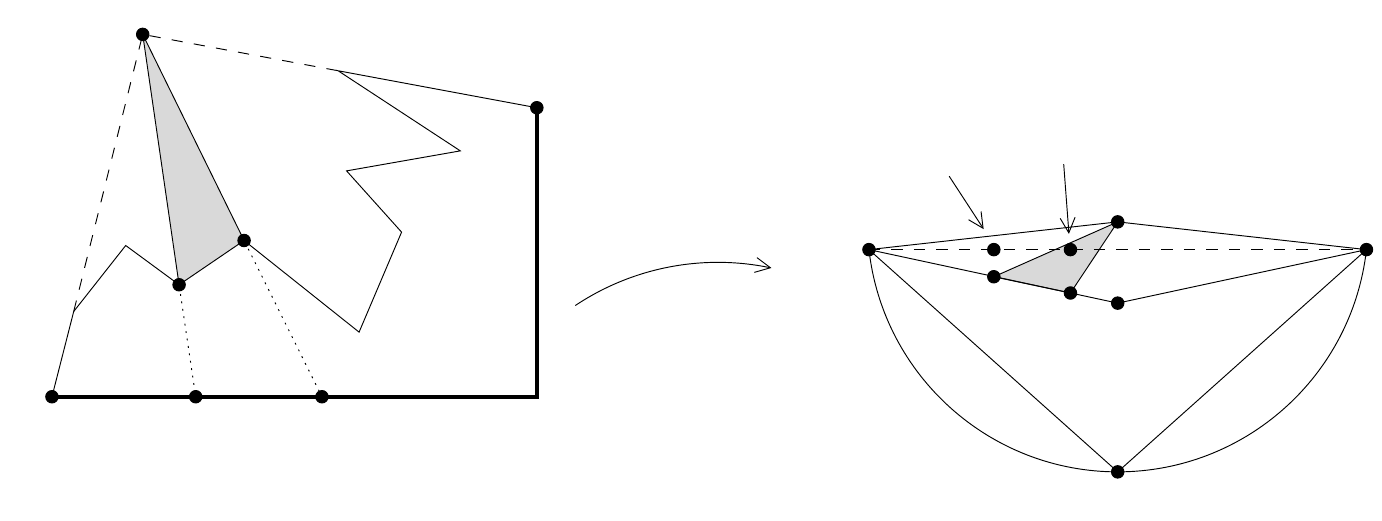_t}\vspace{-10pt}
\caption{A zoom for Case~B, with $Q_j,\,Q_{j+1},\, u_2(Q_j),\, u_2(Q_{j+1}),\, \u{Q}_j$ and $\u{Q}_{j+1}$.}\label{Fig:25}
\end{center}
\end{figure}
Recall that, studying Case~A, we have already found in~(\ref{8side11}) that for each vertex $P_j\in\arc{A_iA_{i+1}}$ one has
\begin{equation}\label{8side11bis}
\frac{\sqrt{2}}{10L}\leq \frac{\ell\big(Q_jQ_{j+1}\big)}{\ell\Big(u_i(Q_j)u_i(Q_{j+1})\Big)} \leq 90 L^2\,.
\end{equation}
Notice also that now we have $\ell\big(Q_jQ_{j+1}\big)=a$, exactly as in Case~A, but it is no more true that $\ell\Big(u_i(Q_j)u_i(Q_{j+1})\Big)=a'$. However, since $\Phi$ is $2-$Lipschitz and $\Phi^{-1}$ is $3-$Lipschitz, we have
\[\begin{split}
a'&=\ell\big(\u{Q}_j\u{Q}_{j+1}\big)=\ell\Big(\Phi\big(u_i(Q_j)\big)\Phi\big(u_i(Q_{j+1})\big)\Big)\leq 2\,\ell\Big(u_i(Q_j)u_i(Q_{j+1})\Big)\,,\\
a' &=\ell\big(\u{Q}_j\u{Q}_{j+1}\big)=\ell\Big(\Phi\big(u_i(Q_j)\big)\Phi\big(u_i(Q_{j+1})\big)\Big) \geq\frac{\ell\Big(u_i(Q_j)u_i(Q_{j+1})\Big)}3\,,
\end{split}\]
which by~(\ref{8side11bis}) ensures
\begin{equation}\label{8side21}
\frac{\sqrt{2}}{20L}\leq \frac a{a'} \leq 270 L^2\,.
\end{equation}
To bound the ratio $b/b'$, we have to estimate both $b$ and $b'$. Concerning $b$, we already know by~(\ref{stimab}) that
\[
\frac 1 {10} \leq b \leq \frac {\sqrt{2}}2\,.
\]
On the other hand, let us study $b'$. The estimate from above, exactly as in~(\ref{stimab'}), is simply obtained by~(\ref{estimr}) as
\[
b' \leq r \leq \frac{2L}\pi\,.
\]
Instead, to get the estimate from below, it is enough to recall that $\u{Q}_j$ belongs to the segment $\u{A}_i\u{D}$ (or $\u{A}_{i+1}\u{D}$). Thus, being $d_i\leq r/4$, an immediate geometric argument and again~(\ref{estimr}) give
\[
b' \geq \frac{1} {2\sqrt{7}}\, r \geq \frac{1} {3\sqrt{7}L}\,.
\]
Collecting the inequalities that we just found, we get
\begin{equation}\label{8side22}
\frac \pi{20L} \leq \frac b{b'} \leq \frac 32\,\sqrt{14} L\,.
\end{equation}
Concerning the angles, (\ref{8angle10}) already tells us that
\[
\frac 1{87L} \leq \alpha \leq \pi - \frac 1{87L}\,.
\]
Moreover, an immediate geometric argument ensures that $\sin\alpha'$ is minimal if $\alpha' = \angle{\u{O}}{\u{A}_i}{\u{D}}$, and in turn this last angle depends only on $d_i$ and it is minimal when $d_i = -r/2$. A simple calculation ensures that, in this extremal case, one has
\[
\alpha' = \arctan \frac{1}{\sqrt 3 /2} - \arctan \frac{1/2}{\sqrt 3 / 2} > 15^\circ\,,
\]
thus we have
\begin{align}\label{8angle2}
\frac{1}{\sin\alpha} \leq 88L\,, &&
\frac{1}{\sin\alpha'} \leq 4 \,.
\end{align}
Therefore, by applying~(\ref{lipschtriangle}) having~(\ref{8side21}), (\ref{8side22}) and~(\ref{8angle2}) at hand, we get
\[\begin{split}
{\rm Lip} (\phi)
&\leq\frac {a'}a + \frac{2b'}{b\sin\alpha} + \frac{a'}{a\sin\alpha}
\leq 10\sqrt 2 L + \frac{3520}\pi\,L^2 + 880\sqrt 2 L^2\,,\\
{\rm Lip} (\phi^{-1})
&\leq\frac a{a'} + \frac{2b}{b'\sin\alpha'} +\frac{a}{a'\sin\alpha'}
\leq 270L^2 + 12\sqrt {14} L+1080L^2\,.
\end{split}\]

\case{C}{There exists some $1\leq i \leq N$ such that $d_i < -r/2$.}

In this last case, notice that the index $i$ such that $d_i < -r/2$ is necessarily unique, since if $d_i<-r/2$ then for all $j\neq i$ one has $d_j> r/2$. For simplicity of notation, let us assume that the index is $i=1$. In this case, differently from the preceding ones, we will \emph{not} set $\u{O}$ to be the center of $\widehat{\u{\B}}$. Instead, as in Figure~\ref{Fig:26}, let us call $\u{M}$ the midpoint of $\u{A}_1\u{A}_2$, $\u{C}\in\widehat{\u{\B}}$ the point such that the triangle $\u{A}_1\u{A}_2\u{C}$ is equilateral, and $\u{D}$ and $\u{O}$ the two points which divide the segment $\u{CM}$ in three equal parts. We will define the extension $v$ in such a way that $v(O)=\u{O}$.\par
Before starting, we need to underline a basic estimate, that is,
\begin{equation}\label{basicestimate}
\frac 4{3L} \leq \ell\big(\u{A}_1\u{A}_2\big) \leq \frac{2\sqrt 3}\pi \, L\,.
\end{equation}
The right estimate is an immediate consequence of the assumption $d_1< -r/2$ and of~(\ref{estimr}). Concerning the left estimate, recall that, as noticed in Remark~\ref{rem-check}, there must be two points $\u{A}_i\u{A}_j\in \partial\widehat{\u{\B}}$ such that $\ell\big(\u{A}_i\u{A}_j\big) \geq 4/3L$. Thus the left estimate follows simply by observing that the distance $\ell\big(\u{A}_i\u{A}_j\big)$ is maximal, under the assumption of this Case~C, for $i=1$ and $j=2$.\par
We can now start our construction. Exactly as in Case~B, call $\Phi:\DoubleS(\u{A}_1\u{A}_2)\to \DoubleS(\u{A}_1\u{A}_2)$ the piecewise affine function which equals the identity out of $\u{A}_1\u{A}_2\u{C}$, and which sends in the affine way the triangle $\u{A}_1\u{MC}$ (resp. $\u{A}_2\u{MC}$) onto the triangle $\u{A}_1\u{DC}$ (resp. $\u{A}_2\u{DC}$). Also in this case, one easily finds that $\Phi$ is $2-$Lipschitz, while $\Phi^{-1}$ is $3-$Lipschitz. We are now ready to define the function $v$. As in Case~B, for any $i$ our definition will be so that $v(\A_i) = \DoubleS(\u{A}_i\u{A}_{i+1})\cup \u{A}_i\u{A}_{i+1}\u{O}$.\par
\begin{figure}[htbp]
\begin{center}
\input{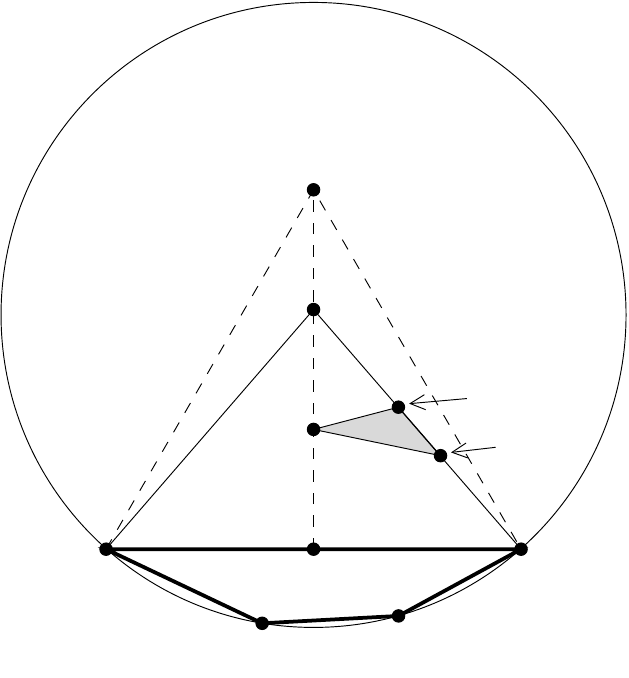_t}\vspace{-10pt}
\caption{Situation in case~C, with $\u{A}_1,\,\u{A}_2,\, \u{C},\, \u{D},\, \u{M}$ and $\u{O}$.}\label{Fig:26}
\end{center}
\end{figure}
Let us start with $i=1$. First of all, we define $v:\D_1\to\Delta$ as $v= \Phi \circ u_1$, which is, exactly as in Case~B, a $636000L^4-$biLipschitz piecewise affine homeomorphism between $\D_1$ and $\DoubleS(\u{A}_1\u{A}_2)\setminus \u{A}_1\u{A}_2\u{D}$. Moreover, defining $Q_j$ and $\u{Q}_j$ as in Case~B, the internal part $\I_1$ is the union of the triangles $Q_jQ_{j+1}O$, while $\u{A}_1\u{OA}_2\u{D}$ is the union of the triangles $\u{Q}_j\u{Q}_{j+1}\u{O}$ (again, possibly adding a vertex corresponding to $\u{D}$). We will then define again $v:\I_1\to \D$ by sending in the affine way each triangle in its corresponding one, and since $v$ is again a piecewise affine homeomorphism by definition we have to check its bi-Lipschitz constant. To do so, we define as in Case~B the constants
\begin{align*}
a&=\ell\big(Q_jQ_{j+1}\big)\,, & 
b&=\ell\big(Q_j O\big) \,, & 
\alpha&=\angle O{Q_j}{Q_{j+1}}\,, \\
a'&=\ell\big(\u{Q}_j\u{Q}_{j+1}\big)\,,& 
b'&=\ell\big(\u{Q}_j\u{O}\big)\,, &
\alpha'&=\angle {\u{O}}{\u{Q}_j}{\u{Q}_{j+1}} \,.
\end{align*}
The very same arguments which lead to~(\ref{8side21}) and~(\ref{8angle10}) give again
\begin{align}\label{8side31}
\frac{\sqrt{2}}{20L}\leq \frac a{a'} \leq 270L^2\,, && \frac{1}{\sin\alpha} \leq 88L\,.
\end{align}
Since~(\ref{stimab}) is still true, to estimate $b/b'$ we again need to bound $b'$ from above and from below. By easy geometric arguments, since $\u{Q}_j$ belongs to $\u{A}_1\u{D}$ or to $\u{A}_2\u{D}$, we find
\[
\frac{\sqrt 7}{14} \,\ell\big(\u{A}_1\u{A}_2\big) \leq b' \leq \ell\big(\u{A}_1\u{O}\big) = \frac{\sqrt 3}3\, \ell\big(\u{A}_1\u{A}_2\big)\,.
\]
(recall that Figure~\ref{Fig:26} depicts the situation and the position of the points). Thanks to~(\ref{basicestimate}), then, we deduce
\[
\frac{2 \sqrt 7}{21L} \leq b' \leq \frac{2}\pi \, L \,,
\]
which by~(\ref{stimab}) yields
\begin{equation}\label{8side32}
\frac{\pi}{20 L}\leq \frac b{b'} \leq  \frac 34\,\sqrt{14}\,L\,.
\end{equation}
Finally, we have to estimate $\sin\alpha'$. As is clear from Figure~\ref{Fig:26}, $\sin\alpha'$ is minimal if $\u{Q}_j\equiv \u{A}_1$, thus if $\alpha' = \angle{\u{O}}{\u{A}_1}{\u{D}}$. Since in this extremal case one has
\[
\alpha' = \arctan \frac{2\sqrt 3}{3} - \arctan \frac{\sqrt 3}{3}> 15^\circ\,,
\]
we obtain
\begin{equation}\label{8angle3}
\sin\alpha' \geq \frac 14 \,.
\end{equation}
Applying then once more~(\ref{lipschtriangle}), thanks to~(\ref{8side31}), (\ref{8side32}) and~(\ref{8angle3}) we get
\[\begin{split}
{\rm Lip} (\phi)
&\leq\frac {a'}a + \frac{2b'}{b\sin\alpha} + \frac{a'}{a\sin\alpha}
\leq 10\sqrt 2 L + \frac{3520}\pi\,L^2 + 880\sqrt 2 L^2\,,\\
{\rm Lip} (\phi^{-1})
&\leq\frac a{a'} + \frac{2b}{b'\sin\alpha'} +\frac{a}{a'\sin\alpha'}\leq 270L^2 + 6\sqrt{14} L+1080L^2\,.
\end{split}\]

To conclude, we have now to consider that case $i\neq 1$. Notice that now we cannot simply rely on the calculations done in Case~A as we did in Case~B, because this time $\u{O}$ is not the center of $\widehat{\u{\B}}$. Nevertheless, we still define $v\equiv u_i$ on $\D_i$, which is $212000L^4$ bi-Lipschitz by Step~VII, and again, to conclude, we have to send $\I_i$ onto $\u{A}_i\u{A}_{i+1}\u{O}$. Since the first set is the union of the triangles $Q_jQ_{j+1}O$, while the latter is the union of the triangles $\u{Q}_j\u{Q}_{j+1}\u{O}$, we define $v$ on $\I_i$ as the piecewise affine map which sends each triangle onto its correspondent one, and we only have to check the bi-Lipschitz constant of $v$ on $\I_1$. As usual, we set
\begin{align*}
a&=\ell\big(Q_jQ_{j+1}\big)\,, & 
b&=\ell\big(Q_j O\big) \,, & 
\alpha&=\angle O{Q_j}{Q_{j+1}}\,, \\
a'&=\ell\big(\u{Q}_j\u{Q}_{j+1}\big)\,,& 
b'&=\ell\big(\u{Q}_j\u{O}\big)\,, &
\alpha'&=\angle {\u{O}}{\u{Q}_j}{\u{Q}_{j+1}} \,.
\end{align*}
Let us now make the following observation. Even though the situation is not the same as in Case~A, as we pointed out above, the only difference is in fact that now $\u{O}$ is not the center of $\widehat{\u{\B}}$. And this difference clearly affects only $b'$ and $\alpha'$, thus~(\ref{8side11}), (\ref{stimab}) and~(\ref{8angle10}) already tell us
\begin{align*}
\frac{\sqrt{2}}{10L}\leq \frac a{a'} \leq 90L^2\,, && \frac 1 {10} \leq b \leq \frac {\sqrt{2}}2\,, && \frac 1{87L} \leq \alpha \leq \pi - \frac 1{87L}\,.
\end{align*}
Concerning $b'$, since any point $\u{Q}_j$ is below $\u{A}_1\u{A}_2$ by construction (recall that we are considering the case $i\neq 1$, so that $\u{Q}_j$ belongs to the side $\u{A}_i\u{A}_{i+1}$), we immediately deduce that
\[
b' \geq \lsegm{MO} =  \frac {\sqrt{3}} 6\, \ell\big(\u{A}_1\u{A}_2\big) \geq
\frac {2\sqrt{3}}{9L} \,,
\]
also using~(\ref{basicestimate}). On the other hand, by the assumption $d_1< -r/2$ and by construction it immediately follows that $\u{O}$ is below the center of $\widehat{\u{\B}}$, then keeping in mind~(\ref{estimr}) we have
\[
b' \leq r \leq \frac{2L}{\pi}\,.
\]
Finally, concerning $\alpha'$, it is clear by construction that both $\alpha'$ and $\pi-\alpha'$ are strictly bigger than $\angle{\u{A}_1}{\u{A}_2}{\u{O}}$, thus
\[
\sin \alpha' \geq \sin \angle{\u{A}_1}{\u{A}_2}{\u{O}} = \sin \bigg( \arctan \frac{\sqrt 3} 3\bigg) = \frac 1 2\,.
\]
Summarizing, we have
\begin{align*}
\frac{\sqrt{2}}{10L}\leq \frac a{a'} \leq 90 L^2\,, && \frac \pi{20 L}\leq \frac b{b'} \leq \frac {3\sqrt{6} L}4\,, && \sin\alpha \geq \frac{1}{88L}\,, && \sin\alpha' \geq \frac 12\,.
\end{align*}
Now, it is enough to use~(\ref{lipschtriangle}) for a last time to obtain
\[\begin{split}
{\rm Lip} (\phi)
&\leq\frac {a'}a + \frac{2b'}{b\sin\alpha} + \frac{a'}{a\sin\alpha}
\leq 5 \sqrt{2} L + \frac{3520}{\pi}\,L^2+440 \sqrt{2} L^2
\,,\\
{\rm Lip} (\phi^{-1})
&\leq\frac a{a'} + \frac{2b}{b'\sin\alpha'} +\frac{a}{a'\sin\alpha'}
\leq 90L^2 + 3\sqrt 6 L + 180 L^2\,
\end{split}\]
and then the proof of the first part of Theorem~\mref{main} is finally concluded.
\end{proof}

\bigstep{IX}{Definition of the smooth extension $v$.}

In this last step, we show the existence of the smooth extension $v$ of $u$, thus concluding the proof of Theorem~\mref{main}. The proof is an immediate corollary of the following recent result by Mora-Corral and the second author (see~\cite[Theorem~A]{MP}; in fact, we prefer to claim here only the part of that result that we need in this paper).

\begin{theorem}\label{carlos}
Let $v:\Omega\to\R^2$ be a (countably) piecewise affine homeomorphism, bi-Lipschitz with constant $L$. Then there exists a smooth diffeomorfism $\hat v:\Omega\to v(\Omega)$ such that $\hat v \equiv v$ on $\partial \Omega$, $\hat v$ is bi-Lipschitz with constant at most $70L^{7/3}$, and
\[
\|\hat v - v\|_{L^{\infty}(\Omega)} + \| D\hat v - Dv\|_{L^p(\Omega)} + \|\hat v^{-1} - v^{-1}\|_{L^{\infty}(v(\Omega))} + \| D\hat v^{-1} - Dv^{-1}\|_{L^p(v(\Omega))} \leq \eps\,.
\]
\end{theorem}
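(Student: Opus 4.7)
The plan is to smooth $v$ in a $\delta$-neighborhood of its singular set, namely the $1$-skeleton of the triangulation on which $v$ is piecewise affine, for a scale $\delta>0$ to be optimized as a power of $L$ at the end. The construction splits naturally into two regimes: the interior of an edge, where only two affine pieces meet transversally, and a small ball around a vertex, where several affine sectors meet. A smooth partition of unity will glue the two constructions, and the resulting $\hat v$ will coincide with $v$ outside a set of area $O(\delta)$, which takes care of the $L^\infty$ and $L^p$ smallness statements once $\delta$ is small.

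In the edge regime, I would take local coordinates $(s,t)$ with $s$ arclength on the edge $E$ and $|t|\leq \delta$ signed transverse distance, and let $v_\pm$ denote the two affine pieces. Define $\hat v(s,t):=\eta(t/\delta)\,v_+(s,t)+(1-\eta(t/\delta))\,v_-(s,t)$ for a smooth nondecreasing $\eta$ with $\eta(-1)=0$, $\eta(1)=1$. Since $v_+-v_-$ vanishes on $E$, the seemingly dangerous derivative term $\delta^{-1}\eta'(t/\delta)(v_+-v_-)$ stays bounded uniformly in $\delta$, so $|D\hat v|\leq CL$ comes for free. The delicate point is a lower bound on $\det D\hat v$: the bi-Lipschitz hypothesis forces $A_\pm$ to agree on the tangent direction to $E$ and to have quantitatively close transverse parts, so a convex combination is uniformly nondegenerate with an explicit lower bound in terms of $L$. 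This gives local bi-Lipschitzness on the strip.

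The vertex regime is where the real work lies, and is the source of the exponent. Around a vertex $P$, in polar coordinates $(r,\theta)$, the map $v$ takes the form $v(P)+r\psi(\theta)$ for a piecewise affine, bi-Lipschitz curve $\psi:S^1\to\R^2$ whose constants are controlled by $L$. The plan is to replace $\psi$ by a smooth approximant $\hat\psi$ by mollifying in $\theta$ at a scale matched to $\delta$, in such a way that $\hat\psi$ is still bi-Lipschitz with an explicit constant. The main obstacle is that angular mollification at a fixed scale can destroy injectivity at vertices where the tangent of $\psi$ turns sharply; balancing the angular smoothing scale, the transverse edge scale $\delta$, and the resulting Lipschitz estimates by choosing $\delta$ to be the appropriate power of $L^{-1}$ is precisely what produces the final constant $CL^{7/3}$. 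Once the bi-Lipschitz bound is obtained, the $L^p$ closeness of $D\hat v$ to $Dv$ follows from the trivial pointwise bound $|D\hat v-Dv|\leq 2L$ integrated over the modification region of measure $O(\delta)$, and the $C^0$ closeness from the definition of $\hat v$ on each strip.
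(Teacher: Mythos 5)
The first thing to note is that this paper does not prove Theorem~\ref{carlos} at all: it is quoted verbatim (in weakened form) from the external reference \cite{MP}, and in Step~IX it is only \emph{applied}. So your sketch cannot be measured against an argument in this paper; it has to stand on its own, and as it stands it has genuine gaps rather than just missing details. The most serious one is the vertex regime, which you yourself identify as ``where the real work lies'' but then resolve only by fiat. Writing $v(x)=v(P)+r\psi(\theta)$ and mollifying $\psi$ in $\theta$ cannot produce a smooth diffeomorphism near $P$: a cone $r\mapsto r\hat\psi(\theta)$ over a smooth closed curve is merely Lipschitz at the apex and is differentiable there only if $\hat\psi$ is the restriction of a single linear map to the circle, so angular smoothing alone leaves a genuine singularity at every vertex. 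You also give no argument that the mollified $\hat\psi$ stays injective with a quantified lower bound (the relevant lower bound degrades at corners, and how it degrades in terms of $L$ is exactly the quantitative heart of the matter), and the claim that ``balancing the scales'' produces the constant $70L^{7/3}$ is asserted, not derived; the exponent is being read off from the statement rather than obtained.

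The edge regime is also not ``for free'' in the form you wrote it. With $A_\pm=Dv_\pm$ one has $A_+-A_-=w\otimes n$ (rank-one, since they agree along the edge), $v_+-v_-=t\,w$, and your blend gives
\[
D\hat v \;=\; A_-+\Bigl(\eta(t/\delta)+\tfrac{t}{\delta}\,\eta'(t/\delta)\Bigr)\,w\otimes n ,
\]
whose determinant is the \emph{affine} interpolation $(1-c)\det A_-+c\det A_+$ with $c=\eta+s\eta'$. For a bounded smooth $\eta$ with $\eta(-1)=0$, $\eta(1)=1$ the coefficient $c$ necessarily exceeds $1$ somewhere, and since $\det A_+/\det A_-$ can be as small as $L^{-4}$, the determinant can become negative for large $L$: the blended map need not be injective. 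This is repairable (write $\hat v=v_-+\delta\,g(t/\delta)\,w$ with a profile $g$ whose slope is kept in $[0,1]$, which exploits the rank-one structure), but the repair is precisely the kind of quantitative care your ``convex combination is uniformly nondegenerate'' glosses over. Finally, for a \emph{countably} piecewise affine $v$ the cells (hence edges and vertices) may accumulate, so there is no single scale $\delta$; the modification scale must be chosen locally cell by cell while keeping the boundary values, the local finiteness, and the global bi-Lipschitz bound — none of which is addressed. For the actual argument you should consult \cite{MP}, where these issues are handled and the exponent $7/3$ is genuinely computed.
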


Having this result at hand, the conclusion of the proof of Theorem~\mref{main} is immediate.

\proofof{Theorem~\mref{main} (smooth extension)}
Let $v$ be an affine extension of $u$ having bi-Lipschitz constant at most $CL^4$, which exists thanks to the proof of the first part of the Theorem, Step~VIII. By Theorem~\ref{carlos}, there exists a map $\tilde v$ which is smooth, concides with $v$ on $\partial\D$, and has bi-Lipschitz constant at most $70 C^{7/3} L^{28/3}$. This map $\tilde v$ is a smooth extension of $u$ as required.
\end{proof}

\section{Proof of Theorem~\mref{maingen}\label{secmaingen}}

We now give the proof of Theorem~\mref{maingen}, which will be obtained from Theorem~\mref{main} by a quick extension argument. We will use the following simple geometric result.

\begin{lemma}\label{altrolavoro}
Let $\varphi: \partial\D \to \R^2$ be an $L$ bi-Lipschitz map. Then, for any $\eps>0$, there exists a piecewise affine map $\varphi_\eps:\partial\D\to\R^2$ which is $3L$ bi-Lipschitz and such that
\[
|\varphi(P) - \varphi_\eps(P) |\leq \eps \qquad \forall P\in\partial\D\,.
\]
\end{lemma}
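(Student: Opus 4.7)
The natural approach is piecewise-affine interpolation. I would take points $P_0,P_1,\ldots,P_N=P_0$ on $\partial\D$ in anticlockwise order, equispaced in arc length with common spacing $\delta$ (to be chosen), and including the four corners of $\D$ among them; then define $\varphi_\eps$ to coincide with $\varphi$ at each $P_i$ and be affine on each sub-segment $[P_i,P_{i+1}]$. The $\eps$-closeness is routine: writing $P=(1-t)P_i+tP_{i+1}$, the vector $\varphi_\eps(P)-\varphi(P)$ is a convex combination of $\varphi(P_i)-\varphi(P)$ and $\varphi(P_{i+1})-\varphi(P)$, so the $L$-Lipschitz bound on $\varphi$ yields $|\varphi_\eps(P)-\varphi(P)|\le 2t(1-t)L\delta\le L\delta/2\le\eps$ as soon as $\delta\le 2\eps/L$.

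For the bi-Lipschitz constant, the upper bound is immediate: each piece has slope $v_i=(\varphi(P_{i+1})-\varphi(P_i))/|P_{i+1}-P_i|$ of norm at most $L$, so parametrizing $\partial\D$ by arc length and integrating the piecewise-constant derivative gives $|\varphi_\eps(P)-\varphi_\eps(Q)|\le L\,\ell(\arc{PQ})\le 2L|P-Q|$, using the elementary bound $\ell(\arc{PQ})\le 2|P-Q|$ on the square boundary. For the lower bound $|\varphi_\eps(P)-\varphi_\eps(Q)|\ge |P-Q|/(3L)$, I would split into cases. First, if $P,Q$ lie on the same affine piece, the slope has norm at least $1/L$ and directly $|\varphi_\eps(P)-\varphi_\eps(Q)|\ge |P-Q|/L$. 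Second, if $|P-Q|\ge 3L\eps$, then the $\eps$-approximation gives
\[
|\varphi_\eps(P)-\varphi_\eps(Q)|\ge|\varphi(P)-\varphi(Q)|-2\eps\ge |P-Q|/L-2\eps\ge |P-Q|/(3L).
\]

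The remaining and delicate case is when $P,Q$ lie on \emph{different} pieces and $|P-Q|<3L\eps$, which I expect to be the main obstacle. The key input is that consecutive slopes satisfy $|v_i+v_{i+1}|\ge 2/L$ (up to an explicit geometric constant when $P_{i+1}$ is a corner of $\D$), a direct consequence of the bi-Lipschitz bound $|\varphi(P_{i+2})-\varphi(P_i)|\ge |P_{i+2}-P_i|/L$; this rules out near-antiparallel configurations that would cause the PL interpolation to fold. For adjacent pieces, writing $P=P_{i+1}-a\hat{t}_i$ and $Q=P_{i+1}+b\hat{t}_{i+1}$ gives $\varphi_\eps(Q)-\varphi_\eps(P)=av_i+bv_{i+1}$, and the desired estimate $|av_i+bv_{i+1}|\ge |P-Q|/(3L)$ has to be extracted from $|v_i+v_{i+1}|\ge 2/L$ by a trigonometric decomposition along $v_i\pm v_{i+1}$. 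For $P,Q$ spanning two or more pieces, one combines the bi-Lipschitz bound on the internal secant $|\varphi(P_{i+1})-\varphi(P_j)|$ with the trivial estimate $|\alpha v_i+\beta v_j|\le 2L\delta$ on the fractional contributions at the two endpoints; both subcases are likely to force $\delta$ to be taken strictly smaller than $2\eps/L$ as a function of $L$ in order to squeeze the multiplicative factor down to $3L$.
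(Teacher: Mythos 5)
Your skeleton (equispaced interpolation, then a case analysis on the position of $P,Q$) is reasonable, and the easy parts are correct: the $\eps$-closeness, the $2L$ upper bound, the same-piece case, and the case $|P-Q|\geq 3L\eps$ all check out. But the case you yourself flag as delicate is where the proof genuinely breaks, and not merely because a computation is missing: the key input you propose for adjacent pieces, namely $|v_i|,|v_{i+1}|\in[1/L,L]$ together with $|v_i+v_{i+1}|\geq 2/L$, is provably too weak to give $|av_i+bv_{i+1}|\geq (a+b)/(3L)$ — in fact it gives no lower bound at all. Take $v_i=(L,0)$ and $v_{i+1}=(2/L-L,0)$: all three constraints are satisfied, yet $av_i+bv_{i+1}=0$ for the positive ratio $a/b=(L-2/L)/L$. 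So from data at the sample points alone one cannot even conclude that the interpolant is injective near $P_{i+1}$. What actually excludes such folded slope configurations is the bi-Lipschitz inequality of $\varphi$ applied to pairs of \emph{non-vertex} points on the two arcs meeting at $P_{i+1}$ (this is what forces a transverse component of order $1/L$ in the return direction of a hairpin); extracting from that a quantitative statement about the secant vectors $v_i,v_{i+1}$, rather than about $\varphi$ itself, is the real content of the lemma. Note that a naive transfer from $\varphi$ to $\varphi_\eps$ via the triangle inequality cannot work here either: near the vertex the pointwise error $|\varphi_\eps-\varphi|$ is of order $L$ times the distance to the vertex, which swamps the target bound of order (distance)$/L$.

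The same difficulty affects your multi-piece subcase, and shrinking $\delta$ cannot cure it: the estimate ``internal secant minus $2L\delta$'' beats $|P-Q|/(3L)$ only when the inner vertices are at mutual distance of order $L^2\delta$ or more, while the dangerous regime is $|P-Q|$ comparable to a few $\delta$ (a hairpin of $\varphi$ whose turn occupies one or several pieces). Since that regime scales linearly with $\delta$, no choice of $\delta=\delta(L,\eps)$ makes it disappear, contrary to your closing hope. Finally, be aware that the paper does not prove this lemma at all — it quotes it from~\cite[Lemma~2.3]{DP} — so the only comparison to be made is with your own argument, whose peripheral cases are fine but whose central case is left genuinely open as written.
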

The proof of this result can be found in the very recent paper~\cite[Lemma~2.3]{DP}. It is interesting to underline here that the main result of that paper, Theorem~\ref{al} below, uses our Theorem~\mref{main} in a crucial way.
\begin{theorem}[{\cite[Theorem~A]{DP}}]\label{al}
If $\Omega\subseteq\R^2$ is a bounded open set and $v:\Omega\to\Delta\subseteq\R^2$ is an $L$ bi-Lipschitz homeomorphism, then for all $\eps>0$ and $1\leq p<+\infty$ there exists a bi-Lipschitz homeomorphism $\omega:\Omega\to\Delta$, such that $\omega=v$ on $\partial\Omega$,
\[
\|\omega-v\|_{L^\infty(\Omega)}+\|\omega^{-1}-v^{-1}\|_{L^\infty(\Delta)}+\|D\omega-Dv\|_{L^p(\Omega)}+\|D\omega^{-1}-Dv^{-1} \|_{L^\infty(\Delta)}\leq \eps\,,
\]
and $\omega$ is either countably piecewise affine or smooth. In particular, the piecewise affine map can be taken $K_1 L^4$ bi-Lipschitz, and the smooth one $K_2 L^{28/3}$ bi-Lipschitz, being $K_1$ and $K_2$ purely geometric constants.
\end{theorem}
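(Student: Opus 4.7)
The plan is to reduce this approximation result to local applications of Theorem~\mref{main}. The idea is to partition $\Omega$ by a locally finite family of small open axis-parallel squares $\{Q_k\}$ whose side lengths shrink dyadically as they approach $\partial\Omega$ (a Whitney-type decomposition), and then to replace $v$ on each $Q_k$ by a piecewise affine bi-Lipschitz extension of a piecewise affine approximation of $v|_{\partial Q_k}$. The three delicate issues to handle are: (a) matching the pieces on shared edges, (b) imposing $\omega = v$ on $\partial\Omega$ exactly, and (c) controlling the $L^\infty$ and $W^{1,p}$ approximation errors simultaneously.

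For the matching issue, I would fix once and for all, on each edge $e$ of the tiling, a single piecewise affine approximation $\varphi_e$ of $v|_e$ provided by Lemma~\ref{altrolavoro}, with $L^\infty$-error much smaller than the side length of the squares touching $e$; for edges $e \subseteq \partial\Omega$, I set $\varphi_e := v|_e$. Concatenating the $\varphi_e$'s along $\partial Q_k$ yields a piecewise affine map $\varphi_k: \partial Q_k \to \R^2$. After rescaling $Q_k$ to the unit square $\D$, the restriction $v|_{\partial Q_k}$ is at worst $2L$ bi-Lipschitz (the ratio between intrinsic and Euclidean distances on the square boundary is at most $2$, realized by opposite midpoints), and Lemma~\ref{altrolavoro} then makes $\varphi_k$ at worst $6L$ bi-Lipschitz. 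Applying Theorem~\mref{main} produces a piecewise affine $\omega_k: Q_k \to \varphi_k(\partial Q_k)$, bi-Lipschitz with constant at most $C(6L)^4 =: K_1 L^4$. On any shared edge $e = \overline{Q_k} \cap \overline{Q_\ell}$, one has $\omega_k|_e = \varphi_e = \omega_\ell|_e$ by construction, so the pieces glue to a continuous, countably piecewise affine, globally $K_1 L^4$ bi-Lipschitz map $\omega:\Omega\to\Delta$. Since edges on $\partial\Omega$ carry $\varphi_e = v|_e$, the boundary condition $\omega = v$ on $\partial\Omega$ holds automatically, and a standard degree/Jordan curve argument ensures $\omega$ is a homeomorphism onto $\Delta$.

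The $L^\infty$ bounds on $\omega - v$ and $\omega^{-1} - v^{-1}$ are immediate: on each $Q_k$ both $v(Q_k)$ and $\omega(Q_k)$ lie in a set of diameter $\lesssim L \,\mathrm{diam}(Q_k)$, which can be forced below $\eps$ by taking the tiling fine enough and the edge-approximation parameter small. The main obstacle is the $W^{1,p}$-smallness of $D\omega - Dv$: although $|D\omega|$ and $|D\omega^{-1}|$ are pointwise bounded by $K_1 L^4$, $Dv$ and $Dv^{-1}$ are only $L^\infty$, so $D\omega - Dv$ is a priori only bounded, not small. Here finiteness of $p$ is crucial: by Lebesgue's differentiation theorem, outside a set of arbitrarily small measure $v$ is nearly affine at the scale of each $Q_k$, so on such ``good'' squares $D\omega \approx Dv$; on the remaining ``bad'' squares, whose total measure is small, $|D\omega - Dv|$ is bounded, so their $L^p$ contribution is also small. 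A standard Vitali-type argument (and an equi-integrability argument for $Dv^{-1}$ after changing variables by $v$, using the bi-Lipschitz property) completes the $L^p$ estimate.

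Finally, the smooth version is deduced from the piecewise affine one by invoking Theorem~\ref{carlos}: smoothing the piecewise affine $\omega$ preserves its boundary values and yields a diffeomorphism $\tilde\omega$ with bi-Lipschitz constant $70(K_1 L^4)^{7/3} =: K_2 L^{28/3}$, while the extra $L^\infty$ and $W^{1,p}$ errors introduced by the smoothing can be made arbitrarily small. Carrying an $\eps/2$ budget through the piecewise affine stage thus preserves all required approximation inequalities.
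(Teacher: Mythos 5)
First, a remark on what you are being compared against: the paper does not prove Theorem~\ref{al} at all --- it is quoted from~\cite{DP}, whose proof in turn uses Theorem~\mref{main} of the present paper. Your overall scheme (a locally finite decomposition into small squares, a fixed piecewise affine approximation of $v$ on the $1$-skeleton, Theorem~\mref{main} applied cell by cell after rescaling, and a good/bad-square splitting for the $W^{1,p}$ bound) is indeed the strategy of~\cite{DP}. But the two steps you treat as routine are exactly where the real work lies, and as written they are genuine gaps. The first is the claim that the concatenation $\varphi_k$ of independently chosen edge approximations is ($6L$, say) bi-Lipschitz on $\partial Q_k$. Lemma~\ref{altrolavoro} is applied edge by edge, so for points $x,y$ on two different edges meeting at a vertex $z$ (or at a hanging node of the nonconforming Whitney grid) you only get $|\varphi_k(x)-\varphi_k(y)|\geq |v(x)-v(y)|-2\delta\geq |x-y|/L-2\delta$, which is vacuous as soon as $|x-y|\lesssim L\delta$; nothing prevents the two image polygonal arcs from leaving $\varphi_k(z)$ at angle zero, or even crossing, so the lower bi-Lipschitz bound and even the injectivity of the skeleton map --- which your degree argument for $\omega$ being a homeomorphism onto $\Delta$ also needs --- can fail. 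In~\cite{DP} this is precisely the delicate point: the grid lines and vertices are selected (Fubini/Lebesgue-point arguments) so that $v$ is differentiable with nondegenerate differential there, and the skeleton approximation is built to stay transversal at the vertices; $L^\infty$-smallness alone cannot do this. (Incidentally, the factor $2$ for intrinsic versus Euclidean distance is unnecessary, since the bi-Lipschitz condition~(\ref{biL}) is Euclidean and restricts directly to $\partial Q_k$; and with a Whitney decomposition no edge lies on $\partial\Omega$, so $\omega=v$ on $\partial\Omega$ must be recovered by letting cell sizes and errors vanish near the boundary, not by setting $\varphi_e=v|_e$ there.)

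The second gap is the assertion that on good squares $D\omega\approx Dv$. Your $\omega_k$ comes from Theorem~\mref{main} used as a black box, which only guarantees the $K_1L^4$ bi-Lipschitz bound: an extension of a boundary datum that is very close to a nondegenerate affine map may still oscillate inside the square and have gradient at distance of order $L^4$ from $Dv$ on a fixed fraction of it, since uniform closeness at scale $\mathrm{diam}(Q_k)$ gives no control on gradients. Hence the good-square contribution to $\|D\omega-Dv\|_{L^p}$ is not small in your construction. To close this one must either build the extension by hand on the good squares (fine triangulation and affine interpolation, which is bi-Lipschitz with constant close to $L$ and has gradient close to $Dv$ precisely because $v$ is nearly affine and nondegenerate there), reserving Theorem~\mref{main} for the bad squares of small total measure, or prove an additional rigidity property of the specific extension of Steps~I--VIII; neither appears in your proposal. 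The same issue, after changing variables by $v$, affects the estimate for $D\omega^{-1}-Dv^{-1}$ on $\Delta$.
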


We can now show our Theorem~\mref{maingen}.
\proofof{Theorem~\mref{maingen}}
Let $u:\partial\D\to\R^2$ be an $L$ bi-Lipschitz map. Fix $\eps>0$ and apply Lemma~\ref{altrolavoro}, obtaining a $3L$ bi-Lipschitz and piecewise affine map $u_\eps:\partial\D\to\R^2$, with $\|u_\eps-u\|_{L^\infty(\partial\D)}\leq \eps$. Theorem~\mref{main}, applied to $u_\eps$, gives then an extension $v_\eps:\D\to\R^2$ which is $81CL^4$ bi-Lipschitz and satisfies $v_\eps=u_\eps$ on $\partial\D$. By a trivial compactness argument, there is a sequence $v_{\eps_j}$ which uniformly converges to an $81CL^4$ bi-Lipschitz function $v$. By construction, one clearly has that $v\equiv u$ on $\partial\D$, thus the thesis is obtained.
\end{proof}

\begin{corollary}\label{allast}
Under the assumptions of Theorem~\mref{maingen}, there exists an extension $\omega:\D\to\R^2$ of $u$ which is countably piecewise affine (resp. smooth), and which is $K_1 C''^4 L^{16}$ bi-Lipschitz (resp. $K_2 C''^4 L^{112/3}$ bi-Lipschitz).
\end{corollary}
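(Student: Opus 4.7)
The plan is to chain together the two deep results already established in the paper, namely Theorem~\mref{maingen} and Theorem~\ref{al}. Since these carry essentially all the substantive content, the proof will reduce to bookkeeping on the bi-Lipschitz constants.

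First I would invoke Theorem~\mref{maingen} on the given $L$ bi-Lipschitz boundary map $u:\partial\D\to\R^2$. This yields a bi-Lipschitz extension $v:\D\to\R^2$ of $u$ with constant $L':=C''L^4$. In particular, $v$ is a bi-Lipschitz homeomorphism of $\D$ onto $\Delta:=v(\D)$ with $v=u$ on $\partial\D$; however, $v$ has no additional structural regularity, so in general it is neither (countably) piecewise affine nor smooth.

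Next I would apply Theorem~\ref{al} to the map $v$, taking $\Omega=\D$, $\Delta$ as above, and some fixed $\eps>0$ (whose value is immaterial, since the claim only concerns the bi-Lipschitz constant). Since $v$ is $L'$ bi-Lipschitz, Theorem~\ref{al} produces a homeomorphism $\omega:\D\to\Delta$ which agrees with $v$ on $\partial\D$, and which can be chosen either countably piecewise affine with bi-Lipschitz constant at most $K_1(L')^4=K_1 C''^4 L^{16}$, or smooth with bi-Lipschitz constant at most $K_2(L')^{28/3}$, giving the exponent $112/3$ on $L$. Because $v=u$ on $\partial\D$, the identity $\omega=v$ on $\partial\D$ immediately forces $\omega=u$ on $\partial\D$, so $\omega$ is the required extension of $u$.

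I do not anticipate any obstacle: the genuine difficulty of the statement is absorbed into Theorems~\mref{maingen} and~\ref{al}. The only conceptual point is that the output of Theorem~\ref{al} preserves the boundary values of its input; this is what legitimises the two-step chain, since it allows us to pass through the intermediate (and possibly irregular) extension $v$ without ever perturbing what happens on $\partial\D$.
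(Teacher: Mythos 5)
Your proposal is correct and follows exactly the paper's own argument: apply Theorem~\mref{maingen} to get a $C''L^4$ bi-Lipschitz extension $v$, then feed $v$ into Theorem~\ref{al} (which preserves boundary values) to obtain the countably piecewise affine, respectively smooth, extension with constant $K_1(C''L^4)^4$, respectively $K_2(C''L^4)^{28/3}$. Nothing is missing.
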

\begin{proof}
This immediately follows from Theorem~\mref{maingen} and Theorem~\ref{al}. In fact, if $v$ is a $C''L^4$ bi-Lipschitz function given by Theorem~\mref{maingen}, then Theorem~\ref{al} provides us with a countable piecewise affine function $\omega$ which is very close to $v$, coincides with $v$ on $\partial\D$, and is $K_1\big(C''L^4\big)^4$ bi-Lipschitz, and with a smooth function $\widetilde\omega$, again very close to $v$, coinciding with $v$ on $\partial \D$ and $K_2 (C''L^4)^{28/3}$ bi-Lipschitz. These two function $\omega$ and $\widetilde\omega$ are the searched extensions of $u$.
\end{proof}

We conclude the paper with a last observation.

\begin{remark}\label{lastsec}
One could be not satisfied to pass from Theorem~\mref{maingen} to Corollary~\ref{allast} passing from $L^4$ to $L^{16}$ (resp. $L^{112/3}$). In fact, it is possible to modify the construction of Theorem~\mref{main} so as to directly obtain, in the case of a general $L$ bi-Lipschitz function $u:\partial\D\to\R^2$, a countably piecewise affine extension $v$ of $u$ which is $\widetilde CL^4$ bi-Lipschitz. And then, thanks to Theorem~\ref{carlos}, one would also get a smooth extension $v$ which is $70 \widetilde C^{7/3} L^{28/3}$ bi-Lipschitz.
\end{remark}

\section*{Acknowledgments}
We warmly thank Tapio Rajala for some fruitful discussions and for having pointed out to us the paper~\cite{Tukia}. Both the authors have been supported by the ERC Starting Grant n. 258685.

\end{document}